\documentclass[a4paper]{amsart}
\pdfoutput=1
\usepackage[utf8]{inputenc}
\usepackage{amssymb,pstricks,amscd}
\usepackage[T1]{fontenc}
\usepackage[totalwidth=17cm,totalheight=24cm]{geometry}
\usepackage{graphicx}

\newtheorem{cor}{Corollary}[section]
\newtheorem{thm}[cor]{Theorem}
\newtheorem{ques}{Question}
\newtheorem{Hthm}{Theorem H\hspace{-0.1cm}}
\newtheorem{Sthm}{Theorem S\hspace{-0.1cm}}
\newtheorem{prop}[cor]{Proposition}
\newtheorem{lemma}[cor]{Lemma}

\theoremstyle{definition}
\newtheorem{defi}[cor]{Definition}
\theoremstyle{remark}
\newtheorem{remark}[cor]{Remark}
\newtheorem{example}[cor]{Example}

\newcommand{\N}{{\mathbb N}}
\newcommand{\Z}{{\mathbb Z}}
\newcommand{\R}{{\mathbb R}}
\newcommand{\RP}{{\mathbb R}P}

\newcommand{\cT}{{\mathcal T}}
\newcommand{\cbT}{{\overline{\mathcal T}}}

\newcommand{\isom}{\mbox{Isom}}
\newcommand{\be}{\begin{eqnarray}}
\newcommand{\ee}{\end{eqnarray}}

\newcommand{\cC}{{\mathcal C}}

\newcommand{\cL}{{\mathcal L}}

\newcommand{\St}{\tilde{S}}


\sloppy

\newcommand{\EM}{\ensuremath}
\newcommand{\norm}[1]{\EM{\left\| #1 \right\|}}

\begin{document}

\title{Flippable tilings of constant curvature surfaces}
\author{François Fillastre}
\address{Universit\'e de Cergy-Pontoise,  d\'epartement de math\'ematiques, UMR CNRS 8088,
F-95000 Cergy-Pontoise, France}
\email{francois.fillastre@u-cergy.fr}
\author{Jean-Marc Schlenker}
\thanks{J.-M. S. was partially supported by the A.N.R. through project
ETTT, ANR-09-BLAN-0116-01.
}
\address{Institut de Math\'ematiques de Toulouse, UMR CNRS 5219 \\
Universit\'e Toulouse III \\
31062 Toulouse cedex 9, France}
\email{schlenker@math.univ-toulouse.fr}
\date{\today (final version)}

\begin{abstract}
We call ``flippable tilings'' of a constant curvature surface a
tiling by  ``black'' and ``white'' faces, so that each edge is
adjacent to two black and two white faces (one of each on each side), 
the black face is 
forward on the right side and backward on the left side, and
it is possible to ``flip'' the tiling by pushing all black
faces forward on the left side and backward on the right side.
Among those tilings we distinguish the ``symmetric'' ones, for
which the metric on the surface does not change under the 
flip. We provide some existence statements, and explain how to parameterize the space of those tilings
(with a fixed number of black faces) in different ways. For
instance one can
glue the white faces only, and obtain a metric with cone
singularities which, in the hyperbolic and spherical case, 
uniquely determines a symmetric tiling. 
The proofs are based on the geometry of polyhedral surfaces
in 3-dimensional spaces modeled either on the sphere
or on the anti-de Sitter space.
\end{abstract}

\maketitle

\section{Introduction}

We are interested here in tilings of a surface which have a striking property:
there is a simple specified way to re-arrange the tiles so that a new tiling 
of a non-singular surface appears. So the objects under study are actually
pairs of tilings of a surface, where one tiling is obtained from the other by
a simple operation (called a ``flip'' here) and conversely. The definition 
is given below.

We have several motivations for studying those flippable tilings. 
One is their intrinsic geometric and dynamic properties. 
It is not completely clear, at first sight,
that those flippable tilings can be ``flipped'', or even that there should
be many examples, beyond a few simple ones that can be constructed by hand.
However the 3-dimensional arguments developed below show that there are indeed
many examples and that the space of all those tilings can be quite well 
understood. Another motivation is that those flippable tilings provide a way
to translate 3-dimensional questions (on polyhedral surfaces) into 2-dimensional
statements.

One of our motivation when developing flippable tilings was to investigate a
polyhedral version of earthquakes. Earthquakes
are defined on hyperbolic surfaces endowed with measured laminations. Those
earthquakes are then intimately related to pleated surfaces in the AdS space (see
\cite{mess,mess-notes} and \cite{earthquakes,cone,mbh} for recent developments).

There are also analogs of this picture for some special smooth surfaces 
in $AdS_3$, in particular maximal surfaces. Earthquakes are then
replaced by minimal Lagrangian diffeomorphisms, see \cite{AAW,minsurf,maximal}.
Similar arguments can be used --- leading to slightly different results --- 
for constant mean curvature or constant Gauss curvature surfaces in 
$AdS_3$ or in 3-dimensional AdS manifolds.

It is then tempting to investigate what happens for polyhedral surfaces,
rather than pleated or smooth surfaces. This leads naturally to the notion
of flippable tilings as developed here, although the analogy with 
earthquakes is only limited, as can be seen in Theorem \ref{tm:earthquake}.

\subsection{Definitions, basic property}

\subsubsection{Definitions and first examples}

\begin{defi}
Let $(S,g)$ be a closed (oriented) surface with a constant curvature metric. 
A {\em right flippable tiling} of $(S,g)$ is a triple $T=(F_b, F_w, E)$, where
\begin{enumerate}
\item $F_b$ and $F_w$ are two sets of closed convex polygons homeomorphic to disks in $S$,
called the ``black'' and ``white'' faces of $T$, with disjoint interiors, which
cover $S$.
\item $E$ is a set of
segments in $(S,g)$, called the {\em edges} of the tiling, such that each
edge of any black or white polygon is contained in some $e\in E$.
\end{enumerate}
We demand that for each oriented edge $e\in E$:
\begin{itemize}
\item $e$ is the union of the edges (in the usual sense) of 
a black and a white face on its right side, and similarly on its left side,
\item the black polygon is forward on the right-hand side of $e$,
and backward on the left-hand side,
\item the lengths of the intersections with $e$ of the two black
faces (resp. the two white faces) are equal.
\end{itemize}
The vertices of the black and white faces are 
the {\em vertices} of the tiling, while the black (resp. white) 
polygons are the {\it black} (resp. {\it white}) {\em faces} of the tiling.

A {\em left} flippable tiling is defined in the same way, but, for
each edge $e$, the black polygon is forward on the left-hand side of
$e$ and backward on the right-hand side. See Figure~\ref{fig.bathroom}.
\end{defi}

\begin{figure}[!h]
\input 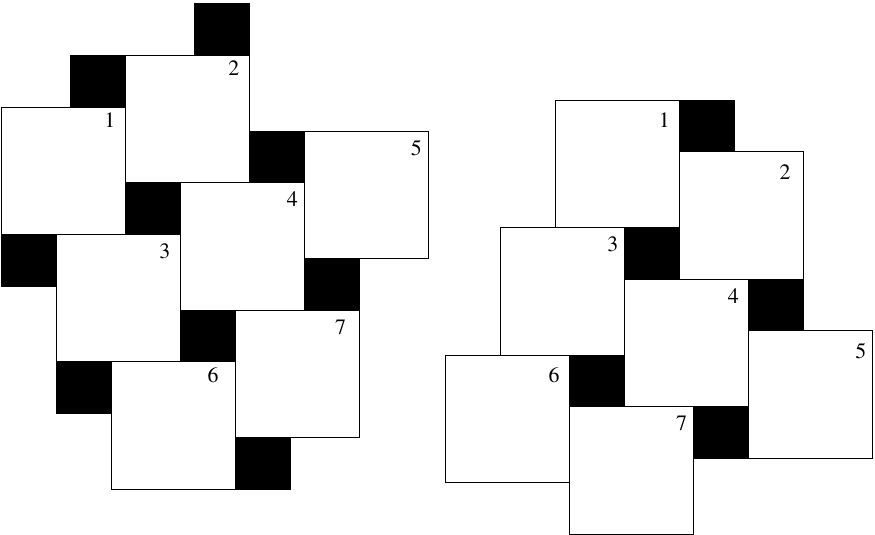_t
\caption{A flippable tiling, sometimes found in bathrooms --- 
left and right versions.}
\label{fig.bathroom}
\end{figure}

Note that the definition does not depend on the orientation of the 
edges: changing the orientation exchanges both the left and the right
side, and the forward and the backward direction.

We assume that some faces of a flippable tiling on the sphere can be digons 
(see the definitions in Subsection~\ref{sub: def pol sph}).

\begin{example}\label{ex: til deg pol}

Let us consider two distinct geodesics on the round sphere $S^2$.
They divide the sphere into $4$ faces. Paint in black two non-adjacent faces,
and paint in white the other two. We get a flippable tiling
with two edges, and there are two choices for the edges: one leading to a left flippable tiling
and the other to a right flippable tiling.

More generally, consider on $S^2$ a convex polygon $P$ with $n$ edges as well as its antipodal $-P$.
Let $v_1, \cdots, v_n$ be the vertices of $P$, so that $-v_1,\cdots, -v_n$ are the vertices of $-P$.
For each $i\in \{ 1,\cdots, n\}$, let $e_i$ be the edge of $P$ joining $v_i$ to 
$v_{i+1}$, and let $-e_i$ be the edge of $-P$ joining $-v_i$ to $-v_{i+1}$.
For each $i$, draw the half-geodesic starting from $v_i$ along $e_i$, and ending at $-v_i$. 
Then it is not hard to see that we get a decomposition of the sphere into $P$, $-P$ and
digons. The vertices of each digon are a vertex $v_i$ of $P$ and its antipodal $-v_i$. 
One edge of the digon contains an edge of $P$ and the other contains an edge of $-P$.
If we paint in black $P$ and $P'$, and in white all the digons, then 
we get a flippable tiling with $n$ edges.
There is another possibility: drawing for each $i$ the half-geodesic starting from $v_i$
along $e_{i-1}$. One possibility leads to a left flippable tiling
and the other to a right flippable tiling (depending on the orientation of $P$).
\end{example}

Corollary~\ref{lem: goutte} will state that the examples above are the ``simplest'' on the sphere.  
But there exist flippable tilings with only one black and/or white face on higher genus surfaces,
as the following example shows.

\begin{example}
Consider a square. 
Its diagonals divide it in $4$ triangles. 
Paint in black two non-adjacent triangles. 
For a suitable choice of the edges, 
we obtain a flippable tiling of the torus with two faces, 
two edges (which are made of parts of the diagonals) and two vertices. 
This construction also provides examples of flippable tiling for surfaces of higher genus.
\end{example}

\subsubsection{Flipping a flippable tiling}

The terminology used for those tilings is explained by the following basic definition and
statements.

\begin{defi}\label{def:flip}
Let $(S,g_r)$ be a constant curvature closed surface with a right flippable tiling $T_r$.
If there is a metric $g_l$ on $S$ of the same curvature as $g_r$
with a left flippable tiling $T_l$, such that:
\begin{itemize}
\item there is a one-to-one orientation-preserving map $\phi$ between the
faces of $T_r$ and the faces of $T_l$, 
sending each black (resp. white) face $f$ of $T_r$ to a black (resp. white) 
face of $T_l$ isometric to $f$,
\item there is a one-to-one map $\psi$ between the edges of $T_r$ and 
the edges of $T_l$, 
\item if an edge $e$ of a face $f$ of $T_r$ is adjacent to an edge $E$
of $T_r$, then the corresponding edge in $\phi(f)$ is adjacent to
$\psi(E)$, and conversely, 
\end{itemize}
we say that $T_l$ is obtained by {\em flipping} $T_r$. 
Similarly we define the flipping of a left flippable
tiling.
 \end{defi}

In other terms, the left flippable tiling $T_l$ on $(S,g_l)$ has the 
same combinatorics as the right flippable tiling $T_r$ and its faces are isometric to the corresponding
(black or white) faces of $T_l$, but each edge has been ``flipped'' in
the sense that on the right side of each edge, the black face, which
was forward on $T_r$, is backward on $T_l$, see Figure~\ref{fig.flip}. 
Note that the hypothesis that $\phi$ is orientation-preserving is relevant since
otherwise simply changing the orientation on $S$ would yield a 
left flippable tiling with the desired properties.

\begin{figure}[!h]
\input 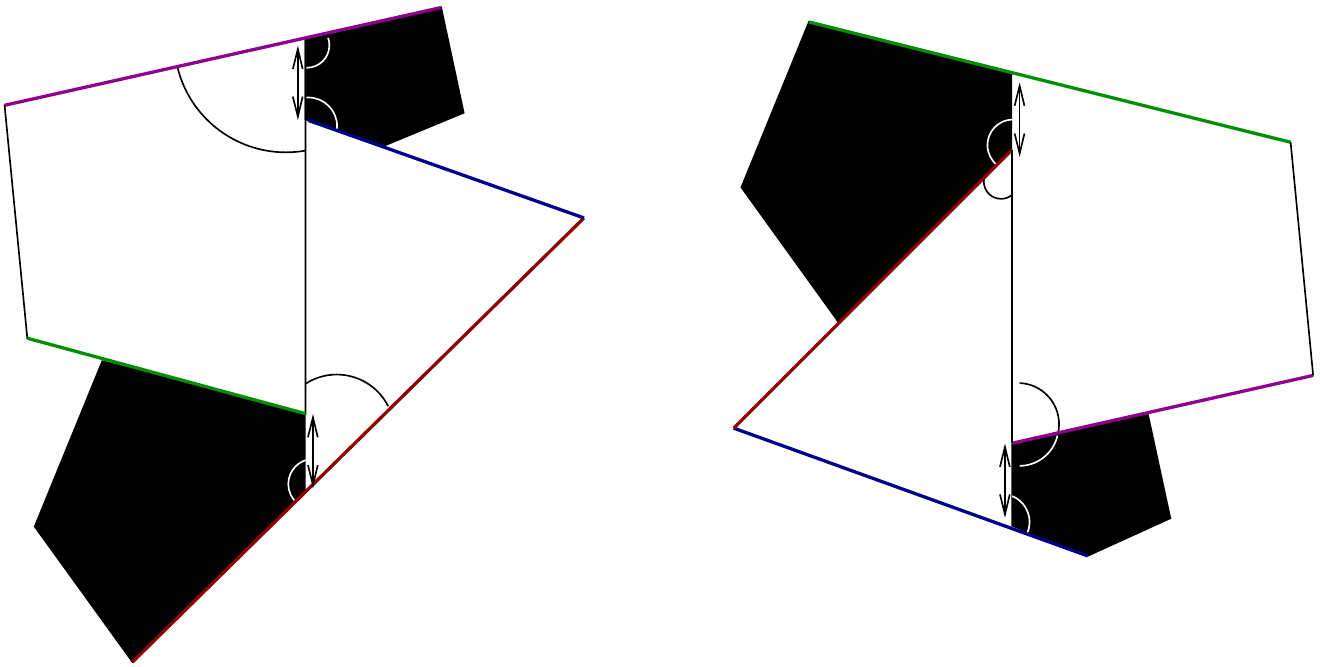_t
\caption{The ``flip'' of an edge.}
\label{fig.flip}
\end{figure}

\begin{Sthm} \label{tmS:base}
Let $S^2$ be the round sphere  with a right flippable tiling $T_r$.
There is a unique  left flippable tiling $T_l$ on $S^2$, such that $T_l$ is obtained by flipping $T_r$.

Similarly, there is a right flippable tiling obtained by flipping any left flippable
tiling, and flipping a  tiling $T$ twice yields $T$.
\end{Sthm}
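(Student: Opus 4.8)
The plan is to construct the flip by a purely local surgery performed simultaneously at every edge of $T_r$, keeping each black and white face fixed as an abstract convex polygon and changing only the way the faces are glued along the edges of $E$. First I would fix an edge $e\in E$, parametrize it as $[0,L]$ with the forward direction pointing towards $L$, and record the two points at which the colour changes: on the right side white is backward and black forward, so the change occurs at some $x=a$, while on the left side black is backward and white forward, so the change occurs at some $x=b$. The equal-length condition in the definition of a flippable tiling forces $a+b=L$, i.e.\ the two colour-change points are symmetric about the midpoint $M$ of $e$. I would then define the local flip to be the re-gluing obtained by interchanging the forward and backward roles on each side of $e$: on the right black becomes backward (occupying $[0,L-a]$) and white forward, and on the left black becomes forward and white backward. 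A direct computation shows that this operation preserves the length of the segment of $e$ carried by each of the four adjacent faces, so each face may be kept isometric to itself, while the incidences face-edge $\leftrightarrow$ edge-of-$E$ are unchanged; this is exactly the data $\phi,\psi$ required in Definition~\ref{def:flip}, and in a neighbourhood of $e$ it switches the connected black band from one ``diagonal'' to the other, turning the right pattern into the left pattern.

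Carrying out this move at every edge at once produces a collection of convex polygons together with a prescription for gluing them pairwise along geodesic segments of equal length. I would first check that these gluings are compatible, so that the result $T_l$ is a closed surface carrying a spherical cone-metric $g_l$ and a tiling with the left flippable structure and the same combinatorics as $T_r$. The number of faces and of edges is preserved, and once the metric is shown to be non-singular the surface is automatically the round sphere, since $S^2$ is the only closed surface admitting a smooth metric of curvature $+1$.

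The hard part will be smoothness: I must show that the cone angle of $g_l$ equals $2\pi$ at every vertex. Since the total area is unchanged, Gauss--Bonnet only yields that the total angle defect vanishes, not that each defect is zero, because the flip permutes the faces surrounding a given vertex and hence changes the multiset of angles meeting there. The cleanest way I see to settle this is to realize the local flip around each vertex as an isometric reflection of the star of that vertex, so that the angle sum is preserved vertex by vertex; equivalently, and this is the route suggested by the $3$-dimensional methods of the paper, one realizes $T_r$ as a polyhedral surface in $S^3$ and the flip as the reflection of that surface across a totally geodesic copy of $S^2$. Because such a reflection is a global isometry of $S^3$, its image is again a genuine polyhedral surface, so $g_l$ is automatically smooth; rigidity of this realization gives uniqueness of $T_l$, and the fact that the reflection (equivalently the diagonal move at each edge) is an involution gives at once that flipping $T_l$ returns $T_r$ and that flipping twice is the identity. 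The same argument, run with the colours and orientations exchanged, flips any left tiling to a right one, completing the statement.
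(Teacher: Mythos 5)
Your local description of the surgery at a single edge is accurate, and you correctly isolate the real difficulty: after re-gluing along every edge simultaneously one must show that the faces close up with total angle $2\pi$ at each vertex, and Gauss--Bonnet alone will not do it. But the step you propose to resolve this is not correct. Realizing the flip as a reflection of each vertex star, or as the reflection of a polyhedral realization of $T_r$ in $S^3$ across a totally geodesic $S^2$, is an orientation-reversing operation: it produces the \emph{mirror image} of $T_r$, whose faces correspond to those of $T_r$ only by orientation-reversing isometries. Definition~\ref{def:flip} explicitly requires the face correspondence $\phi$ to be orientation-preserving, precisely to rule out this trivial solution (the paper remarks on this immediately after the definition). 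A generic convex spherical polygon is not orientation-preservingly isometric to its mirror image, so the reflected surface does not yield the flip; consequently the ``rigidity'' of the reflected copy cannot give uniqueness of an object it does not produce. (Reflecting the stars one vertex at a time also leaves untouched the global compatibility problem it was meant to solve.)

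The three-dimensional mechanism the paper actually uses is different and is where the content lies: $S^3$ is identified with the group $SU(2)$, and a convex polyhedron $P\subset S^3$ admits two distinct projections to $S^2=e^*$, the left one sending a point $x$ of a face contained in $a^*$ to $a^{-1}x$ and the right one sending it to $xa^{-1}$. Both restrict to orientation-preserving isometries on each face; Lemma~\ref{lem: proj angle} shows that across each edge the two projections differ by exactly your local surgery (the forward/backward roles are exchanged while left and right are kept), and Proposition~\ref{prop: proj sph} shows that each projection tiles all of $S^2$ by the faces of $P$ (white) together with the polar links of its vertices (black), the covering of the whole sphere being exactly Gauss--Bonnet. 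Existence of the flip then reduces to realizing $T_r$ as a projection of some convex polyhedron, which is Proposition~\ref{pr:white}; the uniqueness of that white polyhedron up to global isometry and the identities $T_l(P_w(T_l))=T_l$, $P_w(T_l(P))=P$ give the uniqueness of the flip and the involution property. If you want a purely two-dimensional argument you would need a direct proof that the angle sums close up after the global re-gluing, which your proposal does not supply.
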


\begin{Hthm} \label{tmH:base}
Let $(S,g_r)$ be a hyperbolic surface with a right flippable tiling $T_r$.
There is a unique hyperbolic metric $g_l$ on $S$
with a left flippable tiling $T_l$, such that $T_l$ is obtained by flipping $T_r$.

Similarly, there is a right flippable tiling obtained by flipping any left flippable
tiling, and flipping a  tiling $T$ twice yields $T$.
\end{Hthm}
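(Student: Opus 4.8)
The plan is to convert the statement into a question about convex polyhedral surfaces in anti-de Sitter $3$-space $AdS_3$, exactly as the abstract announces and in parallel with the spherical Theorem~S (where $S^3$ and its duality play the role that $AdS_3$ and its Lorentzian structure play here). The first and central step is to set up a dictionary: to a right flippable tiling $T_r$ of $(S,g_r)$ I would associate a spacelike polyhedral surface $\Sigma$ in $AdS_3$, equivariant under a representation $\rho\colon\pi_1(S)\to\isom(AdS_3)$, whose faces are totally geodesic spacelike polygons isometric to the black and white faces of $T_r$, bent along spacelike geodesics corresponding to the edges of $T_r$; the hyperbolic metric $g_r$ is then the induced metric on (one side of) $\Sigma$. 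The key point to check is that the defining axioms of a flippable tiling --- that the black face be forward on the right and backward on the left of each oriented edge, and that the two black (resp. white) portions of each edge have equal length --- are precisely the conditions needed for such a bent surface to close up into an \emph{embedded} spacelike polyhedral surface with the correct cone angles, so that the correspondence becomes a genuine equivalence. This is the polyhedral analogue of the classical correspondence between measured laminations on a hyperbolic surface and pleated surfaces in $AdS_3$ that underlies earthquake theory.

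Granting the dictionary, the flip itself should be realized by a canonical involution of $AdS_3$, namely the time-reversing symmetry that exchanges the future and past sides of $\Sigma$ (equivalently, reverses all the bending). I would show that applying this involution to $\Sigma$ yields a second spacelike polyhedral surface $\Sigma'$ whose induced metric is a hyperbolic metric $g_l$ and whose combinatorial and metric data define a \emph{left} flippable tiling $T_l$: the faces of $T_l$ are literally the same spacelike polygons as those of $T_r$, hence isometric to them, but they are re-glued along the edges with the forward/backward roles interchanged, which is exactly the content of Definition~\ref{def:flip}. This produces the desired $(g_l,T_l)$ and gives existence. Since reversing the bending twice is the identity, $\Sigma''=\Sigma$; more directly, the flip relation of Definition~\ref{def:flip} is symmetric under simultaneously exchanging left with right and forward with backward, so $T_r$ is itself a flip of $T_l$, and the involution statement then follows from uniqueness applied to the left tiling $T_l$.

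The remaining point is uniqueness of $(g_l,T_l)$, and this I would deduce from the \emph{rigidity} of spacelike convex polyhedral surfaces in $AdS_3$: such a surface is determined, up to global isometry, by its induced metric together with its combinatorics (the Lorentzian analogue of Cauchy--Alexandrov rigidity, available in the AdS setting through the geometry of globally hyperbolic AdS manifolds). Concretely, any hyperbolic metric $g_l$ carrying a left flippable tiling $T_l$ that flips $T_r$ reconstructs, via the dictionary, a bent surface realizing the same faces and edge data as $\Sigma$; rigidity forces it to coincide with $\Sigma'$ up to isometry, whence $g_l$ and $T_l$ are unique. I expect the \textbf{main obstacle} to be the first step --- pinning down the precise correspondence and, above all, its \emph{existence} half: one must prove a realization (Alexandrov/Weyl-type) theorem producing an embedded equivariant convex spacelike surface in $AdS_3$ from the tiling data, and verify that embeddedness and the cone-angle conditions are equivalent to the flippable-tiling axioms. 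The involution and, by now, the rigidity are comparatively formal once this dictionary is in place.
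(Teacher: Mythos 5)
Your overall strategy --- translate the tiling into an equivariant convex space-like polyhedral surface in $AdS_3$ and read the flip off that surface --- is the paper's, but the dictionary you set up is not the right one, and the error propagates to the flip mechanism. You place \emph{both} the black and the white faces of $T_r$ on the polyhedral surface $\Sigma$, bent along the edges, with $g_r$ as the induced metric. This cannot work: since the tiling covers the smooth surface $(S,g_r)$, the total angle around each vertex of the tiling is exactly $2\pi$, whereas a convex space-like polyhedral surface in $AdS_3$ has cone angle strictly greater than $2\pi$ at each genuine vertex; a surface with cone angle $2\pi$ everywhere whose bending lines terminate at interior points (the edges of a tiling are segments, not complete geodesics) is forced to be totally geodesic. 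The correct correspondence (Propositions~\ref{prop: proj sph} and~\ref{pr:white}, transposed to $AdS_3$ in Section~\ref{subsub pol flip}) keeps only the \emph{white} faces as faces of $\Sigma=P_w(T_r)$; the black faces appear as the polar links of the vertices (equivalently, as faces of the dual surface), and the induced metric on $\Sigma$ is the singular white metric, not $g_r$. The nonsingular tiled surfaces are recovered not as induced metrics but by the left and right \emph{projections}, which use the identification $AdS_3\cong SL(2,\R)$: a point $x$ on a face contained in the plane $a^*$ is sent to $a^{-1}x$ or to $xa^{-1}$, and the two projections land in the two quotients $H^2/\rho_r$ and $H^2/\rho_l$, producing respectively the left and the right tiling. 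Lemma~\ref{lem: proj angle} is precisely the statement that the two projections of an edge produce the forward/backward patterns required of a left and of a right tiling.

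Consequently the flip is not realized by a time-reversing involution applied to $\Sigma$: such a map is an isometry, leaves the induced metric unchanged, and produces no new hyperbolic structure $g_l$; at best it exchanges the roles of $\rho_l$ and $\rho_r$. The flip is the passage from one projection of the \emph{same} surface $\Sigma$ to the other, and the two target metrics are genuinely different unless $\Sigma$ is Fuchsian (which is exactly the ``symmetric'' case). Your identification of the main obstacle is also off for this particular statement: no Alexandrov/Weyl-type realization theorem is needed to produce $\Sigma$ from $T_r$. The white polyhedron is built directly, by developing the white faces around each black face into a convex cone (unique up to congruence) and continuing over the simply connected universal cover, with global convexity obtained by a van Heijenoort-type argument; the realization theorems enter only for the later results on black metrics and prescribed areas. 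Uniqueness of $(g_l,T_l)$ then follows because the two constructions are mutually inverse, $T_l(P_w(T_l))=T_l$ and $P_w(T_l(P))=P$, rather than from a Cauchy--Alexandrov rigidity statement for the induced metric, which is neither available nor needed for general (non-Fuchsian) equivariant surfaces.
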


Those statements are not completely obvious since one has to check that,
after flipping, the faces ``match'' at the vertices. The proof is in
Section~\ref{sc:sphere} for the spherical case, in Section~\ref{sc:ads}
for the hyperbolic case. In both cases it is based on 
\begin{itemize}
\item a notion of duality between equivariant polyhedral surfaces in a 3-dimensional
space, either the 3-dimensional sphere or the 3-dimensional anti-de Sitter
space,
\item some key geometric properties shared by $S^3$ and $AdS_3$, basically 
coming from the fact that they are isometric to Lie groups with a bi-invariant
metric.
\end{itemize}

There is an easy way to get a right (resp. left) flippable tiling from a 
left (resp. right) flippable tiling $T$. It suffices to color in white all
black faces of $T$ and to color in black all white faces of $T$. The flipping obtained
in this way is denoted by $T^*$. Obviously $(T^*)^*=T$.

\subsubsection{Flipping from a hyperbolic metric to another}

Flippable tilings can be considered as ways to deform from one hyperbolic metric --- 
the metric underlying the left tiling --- to another one --- the metric underlying
the right tiling obtained after tiling. Recall that $\cT$ is the Teichm\"uller space
of $S$.

\begin{thm} \label{tm:earthquake}
Let $h_l,h_r\in \cT$ be two hyperbolic metrics on $S$. There is a left flippable tiling $T_l$ 
on $h_l$ such that flipping $T_l$ yields a right flippable tiling $T_r$ on $h_r$.
\end{thm}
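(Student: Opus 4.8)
The plan is to prove this through the anti-de Sitter geometry underlying the hyperbolic case, exploiting the same correspondence between flippable tilings on hyperbolic surfaces and convex spacelike polyhedral surfaces in globally hyperbolic maximal compact (GHMC) $AdS_3$ manifolds that drives the proof of Theorem~\ref{tmH:base}. The starting point is Mess' parametrization: a pair of hyperbolic metrics $(h_l,h_r)\in\cT\times\cT$ determines a unique GHMC $AdS_3$ manifold $M$, with holonomy $\rho=(\rho_l,\rho_r):\pi_1(S)\to \mathrm{Isom}_0(AdS_3)\cong PSL(2,\mathbb{R})\times PSL(2,\mathbb{R})$, where $\rho_l$ and $\rho_r$ are the Fuchsian holonomies of $h_l$ and $h_r$. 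The whole point of this reduction is that the left and right hyperbolic metrics of $M$ are prescribed to be \emph{exactly} $h_l$ and $h_r$, so it suffices to produce a single polyhedral object in $M$ whose two natural projections recover tilings on these two metrics.

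First I would construct a $\rho$-invariant convex spacelike polyhedral surface $\tilde\Sigma$ in the relevant domain of dependence of $\widetilde{AdS_3}$, for instance as the boundary of the convex hull of a suitably generic $\rho$-orbit of points on a convex spacelike Cauchy surface. Existence here is not delicate, since $M$ is globally hyperbolic with compact Cauchy surface; and, crucially, the theorem asserts only the existence of a tiling, so I am free to choose $\tilde\Sigma$ and need only arrange that it is generic enough for the combinatorial decomposition it induces to be nondegenerate (distinct projected vertices, genuine edges).

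Next I would use the isometry $AdS_3\cong PSL(2,\mathbb{R})$ with its bi-invariant metric to define the left and right projections $\Pi_l,\Pi_r:\tilde\Sigma\to\mathbb{H}^2$, equivariant with respect to $\rho_l$ and $\rho_r$ respectively. Each totally geodesic spacelike face of $\tilde\Sigma$ is an isometric copy of a piece of $\mathbb{H}^2$ (after normalising the $AdS_3$ metric so that spacelike totally geodesic planes have curvature $-1$) and is carried isometrically to a geodesic polygon by both projections; these polygons are the black faces. Along each edge of $\tilde\Sigma$ the dihedral bending forces the projections of the two adjacent faces to leave a gap, and the left/right splitting of the bi-invariant metric opens this gap on opposite sides for $\Pi_l$ and $\Pi_r$; filling these gaps produces the white faces. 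Convexity of $\tilde\Sigma$ guarantees that the black faces have disjoint interiors and that, together with the white faces, they cover the quotient, so that $\Pi_l$ descends to a flippable tiling $T_l$ of $(S,h_l)$ and $\Pi_r$ to a flippable tiling $T_r$ of $(S,h_r)$.

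Finally, both tilings carry the single combinatorial structure of $\tilde\Sigma/\rho$, which supplies the bijections $\phi$ on faces and $\psi$ on edges required by Definition~\ref{def:flip}; corresponding black faces are isometric precisely because both projections are isometric on each face of $\tilde\Sigma$. The asymmetry between $\Pi_l$ and $\Pi_r$ along each edge---the gap opening forward on one side for one projection and backward for the other---is exactly the forward/backward exchange defining a flip, so that $T_l$ is a left tiling, $T_r$ is a right tiling, and $T_r$ is obtained by flipping $T_l$. The main obstacle I anticipate is precisely this last verification, namely that the two projections yield \emph{honest} flippable tilings: that the gaps are nondegenerate white polygons, that the edge-length matching conditions of the definition hold, and that the projected black faces embed. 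This rests on the convexity of $\tilde\Sigma$ together with the special features of the bi-invariant metric on $AdS_3$ (the same geometric input flagged for Theorems~\ref{tmS:base} and~\ref{tmH:base}), rather than on any additional global degree or continuity argument.
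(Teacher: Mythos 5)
Your proposal follows essentially the same route as the paper: invoke Mess's parametrization to obtain the GHM AdS manifold with holonomy $(\rho_l,\rho_r)$, produce some $\rho$-equivariant convex space-like polyhedral surface in it, and let the left and right projection machinery of Section~\ref{subsub pol flip} convert it into the pair of tilings. The only (inessential) difference is how the polyhedral surface is produced: you take the boundary of the convex hull of a finite orbit of points, whereas the paper intersects the futures of finitely many totally geodesic space-like planes lying in the future of the convex core.
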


The proof can be found in Section \ref{sssc:earthquake}. It should be clear from the proof
that $T_l$ is not unique, on the opposite there are many possible choices, and
finding a ``simplest'' such tiling remains an open question.

\subsection{Moduli spaces of flippable tilings}
\label{ssc:moduli}

We now consider more globally the space of flippable tilings, first on
the sphere and then on closed hyperbolic surfaces of a fixed genus $g\geq 2$.

Let $n\geq 3$, consider a right flippable tiling $T$ with $n$ black faces
on $S^2$. We can associate to $T$ a graph $\Gamma$ embedded in $S^2$ (defined up to
isotopy) as follows: 
\begin{itemize}
\item the vertices of $\Gamma$ correspond to the black faces of $T$,
\item the faces of $\Gamma$ (connected components of the complement of
$\Gamma$ in $S^2$) correspond to the white faces of $T$,
\item the edges of $\Gamma$ correspond to the edges of $T$ (which
bound a black and a white face of $T$ on each side), so that a vertex
of $\Gamma$ is adjacent to a face of $\Gamma$ if and only if the corresponding
black and white faces of $T$ are adjacent to the same edge of $T$.
\end{itemize}
We will call $\Gamma$ the {\it incidence graph} of the flippable tiling $T$.

\begin{defi}
Let $n\geq 3$. We call $\cT^{r,1}_n$ the set of right flippable tilings of
$S^2$ with $n$ black faces, considered up to isotopy. If $\Gamma$ is an embedded graph in $S^2$ 
(considered up to isotopy) with $n$ vertices, we call $\cT^{r,1}_n(\Gamma)$
the set of right flippable tilings of $S^2$ with incidence graph $\Gamma$.
\end{defi}

For each graph $\Gamma$ there is a natural topology on $\cT^{r,1}_n(\Gamma)$,
where two tilings in $\cT^{r,1}_n(\Gamma)$ are close if, after applying an 
isometry of $S^2$, their corresponding vertices are close. However there is 
also a topology on $\cT^{r,1}_n$. We consider that given 
a flippable tiling $t\in \cT^{r,1}_n(\Gamma)$ and a sequence of tilings 
$t_n\in \cT^{r,1}_n(\Gamma')$, then $(t_n)$ converges to $t$ if, after
applying a sequence of isometries of $S^2$, the union of white (resp. black) 
faces of $t_n$ converges to the union of white (resp. black) faces of $t$
in the Hausdorff topology. 

For instance $\Gamma$ could be obtained from
$\Gamma'$ by erasing an edge $e$ between two faces $f,f'$ of $\Gamma'$.
Then $f$ and $f'$ correspond to white faces of $t_n$, for all $n\in \N$. 
In the limit as $n\rightarrow 0$, those faces merge to one white face of
$t$, while the two black faces of the $t_n$ adjacent to the same edge
of the flippable tiling ``lose'' an edge. In a dual way, $\Gamma$ could
be obtained from $\Gamma'$ by collapsing an edge $e$, so that its two 
vertices $v,v'$ become one. In this case the two black faces corresponding
to $v$ and $v'$ merge in the limit flippable tiling. 

\begin{Sthm} \label{tm:moduli-s}
With the notations above:
\begin{enumerate}
\item  When $\Gamma$ is 3-connected, $\cT^{r,1}_n(\Gamma)$
is non-empty and homeomorphic to an open ball of dimension equal to the
number of edges of $\Gamma$.
\item $\cT^{r,1}_n$, with the topology described above, is homeomorphic to a 
manifold of dimension $3n-6$.
\end{enumerate}
\end{Sthm}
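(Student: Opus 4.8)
The plan is to establish the two parts of Theorem S \ref{tm:moduli-s} separately, treating the rigid combinatorial case first and then assembling the global picture by gluing across strata of different incidence graphs.

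For part (1), I would set up a parameterization map. Given a 3-connected graph $\Gamma$ with $n$ vertices, the idea is that a right flippable tiling with incidence graph $\Gamma$ is determined by the positions in $S^2$ of its $n$ black faces, subject to the compatibility conditions in the definition (matching edge lengths on each side, and the forward/backward orientation of the black faces). The combinatorial rigidity supplied by 3-connectedness should guarantee that the realization problem is governed by a well-posed system, so the natural strategy is to exhibit $\cT^{r,1}_n(\Gamma)$ as the solution set of the gluing equations and count dimensions. I would first check non-emptiness by a direct construction (e.g.\ deforming one of the explicit tilings from Example \ref{ex: til deg pol}, or using the duality with polyhedral surfaces in $S^3$ referenced in the proof of Theorem S \ref{tmS:base}), then show the parameter space is an open ball by proving the defining conditions cut out a smooth open submanifold of the configuration space whose dimension equals the number of edges of $\Gamma$. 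The cleanest route to the dimension count is to observe that each edge of $\Gamma$ contributes one free parameter (its length, say), while the closing-up conditions at the vertices are exactly balanced by the isometry group action, so that the edge lengths give global coordinates. Euler's formula for a graph on $S^2$ then pins down consistency of the count.

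For part (2), the plan is to glue the strata $\cT^{r,1}_n(\Gamma)$ over all admissible $n$-vertex graphs $\Gamma$ into a single manifold using the Hausdorff-convergence topology described before the statement. The key computation is the top-dimensional stratum: when $\Gamma$ is a triangulation of $S^2$ (the generic, maximally-refined case, all vertices trivalent faces being triangles), Euler's formula gives $V-E+F=2$ with $V=n$; combined with the trivalence relation $2E=3F$ one obtains $E=3n-6$. By part (1) this stratum is an open ball of dimension $3n-6$, which matches the asserted global dimension. I would then argue that every other graph $\Gamma$ arises from such a maximal one by the edge-erasing and edge-collapsing degenerations described in the text, so the corresponding strata $\cT^{r,1}_n(\Gamma)$ sit as boundary faces of lower dimension that are limits of the top stratum, and the transition between adjacent strata is continuous in the Hausdorff topology. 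The manifold structure then follows by verifying that across each such degeneration the charts patch together smoothly, i.e.\ that erasing or collapsing an edge corresponds to letting one edge-length coordinate tend to $0$ in a way that extends the chart across the stratum boundary.

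The main obstacle I expect is precisely this last gluing step: showing that the union of strata of differing combinatorial type is genuinely a \emph{manifold} of uniform dimension $3n-6$, rather than merely a stratified space with a top stratum of that dimension. One must rule out that lower-dimensional strata introduce genuine boundary or corner behavior, which requires a careful local analysis near a degenerate tiling showing that the edge-length coordinates of the top stratum extend to a full chart as an edge length passes through $0$ (the collapsing of an edge on one side being compensated by the appearance of an edge on the dual side, so that the total count is preserved). Making this limiting argument rigorous --- establishing that the Hausdorff limit of tilings with graph $\Gamma'$ is a well-defined tiling with the degenerate graph $\Gamma$, and that the parameterizations glue to a smooth atlas --- is where the substantive work lies; the dimension count and the per-stratum analysis in part (1) are comparatively routine once the parameterization by edge lengths is in hand.
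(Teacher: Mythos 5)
Your proposal identifies the correct dimension counts but leaves unproven exactly the steps that carry the mathematical content, and the paper resolves those steps by a mechanism you only mention in passing: the bijective correspondence between flippable tilings of $S^2$ and convex polyhedra in $S^3$ (Propositions \ref{prop: proj sph} and \ref{pr:white}, together with Lemma \ref{lm:topologies} matching the topologies). For part (1), non-emptiness for an \emph{arbitrary} 3-connected graph $\Gamma$ cannot be obtained by deforming the explicit examples; it is precisely Steinitz's theorem (every 3-connected planar graph is the 1-skeleton of a convex polyhedron in $\R^3$, transported to $S^3$ via a projective model of a hemisphere). Likewise, your assertion that ``the edge lengths give global coordinates'' and that the gluing equations cut out an open ball is not something one can check by a routine transversality or dimension count: it is the content of the refinement of Steinitz's theorem on realization spaces of 3-polytopes (the paper cites Richter-Gebert, Section 13.3), a genuinely hard result. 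As written, your part (1) amounts to restating the conclusion.

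For part (2), you correctly flag that the manifold structure across strata of different combinatorial type is the crux, but your plan (extend edge-length charts as an edge length passes through $0$) is not carried out and would be delicate, since the coordinate systems you propose are tied to a fixed $\Gamma$. The paper sidesteps the stratification entirely: under the polyhedral correspondence, $\cT^{r,1}_n$ is the space of convex polyhedra with $n$ vertices in $S^3$ up to isometry, which is globally parameterized by $n$-tuples of points in convex position (an open condition, uniform across all combinatorial types) modulo the 6-dimensional isometry group, giving a manifold of dimension $3n-6$ with no gluing argument needed. The missing idea in your proposal is thus to use the vertices of the white (or black) polyhedron as coordinates that do not see the combinatorics, rather than edge lengths that do.
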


Recall that a graph is 3-connected if it cannot be disconnected by 
removing at most 2 vertices --- for instance, the 1-skeleton of a 
triangulation of a simply connected surface is 3-connected. 

We now turn to hyperbolic surfaces. We consider a surface $S$ of genus
$g\geq 2$, and use the notion of incidence graph introduced above for
a flippable tiling. 

\begin{defi}
Let $n\geq 1$. We call $\cT^{r,-1}_{S,n}$ the set of right flippable tilings of
a hyperbolic metric on 
$S$ with $n$ black faces, considered up to isotopy. 
If $\Gamma$ is an embedded graph in $S$ 
(considered up to isotopy) with $n$ vertices, we call $\cT^{r,-1}_{S,n}(\Gamma)$
the set of right flippable tilings of a hyperbolic metric on 
$S$ with incidence graph $\Gamma$.
\end{defi}

As in the spherical case, given a graph $\Gamma$ embedded in $S$, 
there is a natural topology on $\cT^{r,-1}_{S,n}(\Gamma)$, for which 
two hyperbolic $h,h'$ on $S$ and two flippable tilings $T$ and $T'$
respectively in $(S,h)$ and in $(S,h')$, $T$ and $T'$ are close if
there is a diffeomorphism $u:(S,h)\rightarrow (S,h')$ with bilipschitz
constant close to $1$ and which sends each face of $T$ close to the
corresponding face of $T'$ (in the Hausdorff topology). 

However, still as in the spherical case, there is also a natural topology
on $\cT^{r,-1}_{S,n}$, where two flippable tilings $T$ and $T'$ respectively
on $(S,h)$ and on $(S,h')$ are close if there is a diffeomorphism 
$u:(S,h)\rightarrow (S,h')$ with bilipschitz constant close to $1$ sending
the union of the white (resp. black) faces of $T$ close to the union of 
the white (resp. black) faces of $T'$ in the Hausdorff topology. The same
phenomena of merging of two white (resp. black) faces can happen as in the
spherical case, as described above.

\begin{Hthm} \label{tm:moduli-h}
With the notations above:
\begin{enumerate}
\item $\cT^{r,-1}_{S,n}(\Gamma)$ is non-empty as soon as the universal cover of 
$\Gamma$ is 3-connected.
\item Under this condition, $\cT^{r,-1}_{S,n}(\Gamma)$ is
homeomorphic to the interior of a manifold with boundary of dimension $6g-6+e$,
where $g$ is the genus of $S$ and $e$ the number of edges of $\Gamma$.
\item With the topology described above, $\cT^{r,-1}_{S,n}$ is homeomorphic to a 
manifold of dimension $12g+3n-12$.
\item The mapping-class group $MCG(S)$ of $S$ has a canonical 
properly discontinuous action 
on $\cT^{r,-1}_{S,n}$ compatible with the decomposition as the union
of the $\cT^{r,-1}_{S,n}(\Gamma)$. 
\end{enumerate}
\end{Hthm}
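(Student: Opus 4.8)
The plan is to transport the statement into the anti-de Sitter world, exactly as in the proof of Theorem~\ref{tmH:base}, and then to deduce the moduli of flippable tilings from the moduli of the corresponding equivariant polyhedral surfaces. First I would set up a dictionary. A right flippable tiling $T$ of a hyperbolic metric on $S$ with incidence graph $\Gamma$ should correspond to a $\rho$-equivariant spacelike convex polyhedral surface $\Sigma$ in $AdS_3$, where $\rho\colon\pi_1(S)\to\isom(AdS_3)$ is the holonomy of a globally hyperbolic maximal compact $AdS$ structure on $S\times\R$; by Mess' theorem such a structure is encoded by a pair $(m_-,m_+)\in\cT\times\cT$, and these are precisely the two hyperbolic metrics interchanged by the flip — the metric $g_l$ carrying the flipped tiling $T_l$ produced by Theorem~\ref{tmH:base} and the metric $g_r$ carrying $T$. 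Combinatorially the vertices of $\Sigma$ are the $n$ black faces of $T$, its faces are the white faces, and its edges are the edges of $T$, so $\Gamma$ is the $1$-skeleton of $\Sigma$; gluing the white faces recovers the induced cone metric on $\Sigma$, as in the abstract. With this dictionary the hypothesis of part~(1) is transparent: $3$-connectivity of the universal cover of $\Gamma$ is exactly the (equivariant) Steinitz condition allowing $\Gamma$ to be realized as the $1$-skeleton of a convex polyhedral surface.

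Granting the dictionary, part~(1) is an existence statement for equivariant convex polyhedral surfaces in $AdS_3$ with prescribed $1$-skeleton, which I would obtain from the Alexandrov–Steinitz type realization theorem in $AdS_3$ underlying Theorem~\ref{tmH:base}, valid precisely under the $3$-connectivity assumption. For part~(2) I would realize $\cT^{r,-1}_{S,n}(\Gamma)$ as a subset of the product of the space of holonomies with the configuration space of the $n$ vertices, cut out by the coplanarity equations forcing each white face to be a totally geodesic polygon. Infinitesimal rigidity of convex polyhedral surfaces in $AdS_3$ makes these equations transverse, so the solution set is a smooth manifold, and its dimension is computed directly: the holonomies contribute $\dim(\cT\times\cT)=12g-12$, the $n$ vertices, each ranging in the $3$-dimensional space $AdS_3$, contribute $3n$, and each white face with $k_f$ sides imposes $k_f-3$ coplanarity conditions, i.e.\ $\sum_f(k_f-3)=2e-3F$ conditions in total. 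Since Euler's relation gives $F=2-2g-n+e$, this equals $6g+3n-6-e$, and the net dimension is $12g-12+3n-(6g+3n-6-e)=6g-6+e$. Strict convexity and positivity of the edge lengths are open conditions cutting out the interior; as an edge length tends to $0$ one reaches a boundary face where the combinatorics degenerate, which is why $\cT^{r,-1}_{S,n}(\Gamma)$ is the interior of a manifold with boundary.

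For part~(3) I would glue the strata together. By the same Euler computation, a graph with $n$ vertices all of whose faces are triangles has $e=3n+6g-6$ edges, so by part~(2) the strata of maximal dimension are exactly those whose white faces are triangles, of dimension $6g-6+e=12g+3n-12$; any graph with fewer edges gives a stratum of strictly smaller dimension. The decisive point is the local picture along a codimension-one wall, where a single pair of white triangles degenerates into a quadrilateral, i.e.\ one edge of $\Gamma$ is collapsed: exactly two triangulation strata abut such a wall, corresponding to the two diagonals of the quadrilateral, and the charts from part~(2) match across it. This is the usual diagonal-flip (Whitehead move) picture, and it shows that $\cT^{r,-1}_{S,n}=\bigcup_\Gamma\cT^{r,-1}_{S,n}(\Gamma)$ is a manifold of dimension $12g+3n-12$, with the lower strata appearing as the union of the walls.

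Part~(4) is then essentially formal: $MCG(S)$ acts by changing the marking, sending a tiling with graph $\Gamma$ to a tiling with graph $\phi(\Gamma)$, hence permutes the strata compatibly with the decomposition, and the forgetful map $T\mapsto(g_l,g_r)\in\cT\times\cT$ is equivariant, so proper discontinuity is inherited from that of $MCG(S)$ on $\cT$. The main obstacle, as always with such Alexandrov-type parametrizations, is not the local picture but its globalization in part~(2): upgrading the transverse local chart to a \emph{global} homeomorphism onto the interior of a manifold with boundary. This needs genuine rigidity in $AdS_3$ for injectivity, and — the real difficulty — a properness/degeneration analysis showing that the only ways to leave the space are the edge-collapses producing the boundary faces, with no interior loss of convexity or spacelikeness. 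Establishing this properness, together with the matching of charts across the walls in part~(3), is where the work concentrates; the rest is bookkeeping with Euler's formula and with Theorem~\ref{tmH:base}.
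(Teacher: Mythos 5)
Your dictionary between tilings and equivariant polyhedral surfaces, and your dimension count for part~(2) (holonomies contribute $12g-12$, vertices contribute $3n$, coplanarity imposes $2e-3F$ conditions, Euler's formula gives $6g-6+e$), match the paper's proof. But there is a genuine gap in part~(1): you invoke an ``Alexandrov--Steinitz type realization theorem in $AdS_3$'' that would realize any graph with $3$-connected universal cover as the $1$-skeleton of an equivariant convex polyhedral surface. No such theorem is available; indeed the paper explicitly remarks (after the statement of the theorem) that an extension of Steinitz's theorem to equivariant polyhedra is exactly what is \emph{not} known, which is why it cannot even conclude that $\cT^{r,-1}_{S,n}(\Gamma)$ is a ball. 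The actual existence argument is quite different: one applies Thurston's extension of the Koebe circle packing theorem to the dual graph $\Gamma^*$ on a hyperbolic surface $(S,h)$, lifts the packing to a totally geodesic spacelike plane $H\subset AdS_3$, passes to the equidistant surface $H_r$, and takes the intersection of the past half-spaces of the planes cutting $H_r$ along the lifted circles; the circles orthogonal to the interstices guarantee that the planes around each face of $\Gamma^*$ meet at a single point, producing a convex equivariant polyhedral surface with $1$-skeleton $\Gamma$. Without some such construction, part~(1) is unproved, and since parts~(2)--(4) are conditioned on non-emptiness of the strata, the gap propagates.

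On part~(3) your route also diverges from the paper's, in a way that adds unnecessary difficulty: you stratify by combinatorics, identify the top-dimensional strata with triangulations, and glue charts across codimension-one walls by diagonal flips. The paper instead gets the manifold structure on all of $\cT^{r,-1}_{S,n}$ in one stroke from its Lemma~\ref{ref:lem variation}: an equivariant polyhedral surface is recovered as (part of) the boundary of the convex hull of the orbit of its $n$ fundamental vertices, and any sufficiently small perturbation of the vertices and of the holonomy again yields such a surface, so the space is locally parameterized by $(\rho, v_1,\dots,v_n)$ of dimension $12g-12+3n$ with no wall-matching needed. Your flip-of-diagonal argument could in principle be completed, but you would then owe a proof that exactly two top strata abut each wall and that the charts agree there, which the convex-hull parameterization renders moot.
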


Note that it appears likely that $\cT^{r,-1}_{S,n}(\Gamma)$ is always homeomorphic
to a ball. This would follow from an extension to surfaces of higher genus and
to equivariant polyhedra of
a theorem of Steinitz on the topology of the space of convex polyhedra with given combinatorics in Euclidean
3-space.

\subsection{Parameterization with fixed areas of the faces}

We now describe more specific results, on the parameterizations of the 
space of flippable tilings for which the area of the black faces are fixed.

\subsubsection{On the sphere}

\begin{defi}\label{defS:K}
For $n> 1$, let 
$k_1, \cdots, k_n\in (0,\pi)$ have sum less than $4\pi$, and be such that for all
$j\in \{ 1,\cdots, n\}$, $\sum_{i\not= j}k_i>k_j$.
 We denote
by $\cT^{r,1}_n(K)$,  $K=(k_1, \cdots, k_n)$, the space of right flippable tilings on $S^2$ with 
$n$ black faces of area $k_1,\cdots, k_n$.
\end{defi}

\begin{Sthm} \label{tm:Sarea}
Under the conditions of the previous definition, if $n\not= 2$,
there is a natural parameterization of $\cT^{r,1}_n(K)$ by the space of configurations
of $n$ distinct points on $S^2$.
\end{Sthm}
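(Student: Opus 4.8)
The plan is to identify $\cT^{r,1}_n(K)$ with a space of convex polyhedral surfaces in the $3$-sphere and to read off the $n$ points as the directions (poles) of their faces, exactly the duality mechanism underlying Theorem~\ref{tmS:base}. Concretely, a right flippable tiling $T$ of the round $S^2$ should correspond to a convex polyhedral surface $\Sigma\subset S^3$ together with its polar dual $\Sigma^*$, the black faces of $T$ being isometric to the faces of $\Sigma$ and the white faces to the faces of $\Sigma^*$, with the flip of Theorem~\ref{tmS:base} exchanging the roles of $\Sigma$ and $\Sigma^*$. The essential dictionary I would establish is metric: the $i$-th black face $F_i$ lies in a totally geodesic $S^2\subset S^3$ whose pole is the dual vertex $v_i^*$ of $\Sigma^*$, and by spherical Gauss--Bonnet applied to $F_i$ together with polar duality (edge lengths $\leftrightarrow$ angles), the \emph{area} $k_i$ of $F_i$ equals the angle defect (concentrated curvature) of $\Sigma^*$ at $v_i^*$. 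Since for a convex polyhedral $2$-sphere in $S^3$ one has $\sum_i (\text{defect at } v_i^*)=4\pi-\mathrm{Area}(\Sigma^*)$, the prescribed data automatically satisfies $\sum_i k_i<4\pi$, matching Definition~\ref{defS:K}; the remaining conditions $k_i\in(0,\pi)$ and $\sum_{i\neq j}k_i>k_j$ are the non-degeneracy constraints below. That the $v_i^*$ are genuinely $n$ points of a $2$-sphere comes from the Lie group structure: using $S^3\cong SU(2)$ with its bi-invariant metric and $SO(3)=SU(2)/\{\pm1\}$, the flip realizes each black face of $T_r$ as the image of the corresponding face under a rotation $\rho_i\in SO(3)$, and $x_i\in S^2$ is the (oriented) fixed axis of $\rho_i$.

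First I would set up the map $\Phi\colon\cT^{r,1}_n(K)\to \mathrm{Conf}_n(S^2)$ sending $T$ to $(x_1,\dots,x_n)$, and verify the easy properties: that the $x_i$ are pairwise distinct, that $\Phi$ is well defined independently of the choices (the fixed axis is canonical once an orientation is fixed, with the degenerate case $\rho_i=\mathrm{id}$ excluded by non-degeneracy of the faces), and that $\Phi$ is continuous for the topology described before Theorem~\ref{tm:moduli-s}. This is also where the hypothesis $n\neq 2$ enters: the two-black-face tilings are exactly the antipodal configurations of Example~\ref{ex: til deg pol}, for which the dual surface degenerates to a bigon and the two poles become antipodal, so $\Phi$ fails to land in, or be a homeomorphism onto, the configuration space.

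The heart of the argument is inverting $\Phi$: given a configuration $(x_1,\dots,x_n)$ and the fixed vector $K$, I must produce a \emph{unique} convex polyhedral surface $\Sigma^*\subset S^3$ with vertices $x_1,\dots,x_n$ and prescribed curvatures $k_1,\dots,k_n$, equivalently a unique convex $\Sigma$ realizing the black faces, hence a unique tiling. This is an Alexandrov/Minkowski-type existence and uniqueness statement for convex surfaces in $S^3$, which I would attack by the continuity (deformation) method: the curvature map, sending a convex surface with vertices at the $x_i$ to its defect vector, is shown to be a local homeomorphism onto its image via an infinitesimal rigidity result of Cauchy--Alexandrov type for convex polyhedral surfaces in $S^3$, and it is proper onto the admissible region cut out by $k_i\in(0,\pi)$, $\sum_i k_i<4\pi$ and $\sum_{i\neq j}k_i>k_j$. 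A degree or connectedness argument then upgrades this to a global homeomorphism, and uniqueness follows from the rigidity. The most robust route to obtain existence and uniqueness simultaneously is to reformulate the problem variationally, as the critical point of a concave volume-type functional on the space of such surfaces.

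The step I expect to be the main obstacle is precisely this properness and embeddedness analysis. I would need to control all degenerations as two points $x_i,x_j$ collide or as $K$ tends to the boundary of the admissible region, showing that the stated inequalities on $K$ are exactly what prevents a limiting surface from collapsing or developing a vertex of defect $\geq\pi$; the triangle-type inequality $\sum_{i\neq j}k_i>k_j$ is the natural candidate for ruling out the "one dominant face'' degeneration. Equally delicate is verifying that the reconstructed object is an honestly \emph{embedded} tiling of the smooth round $S^2$, i.e. that every tiling vertex has cone angle exactly $2\pi$, rather than an immersed or cone-singular configuration; this embeddedness, together with the rigidity needed for injectivity, is where the special geometry of $S^3$ as a bi-invariant Lie group (as invoked for Theorem~\ref{tmS:base}) does the decisive work.
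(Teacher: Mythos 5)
Your reduction of the theorem to a polyhedral realization problem in $S^3$ is geometrically natural, and your dictionary (black faces $\leftrightarrow$ polar links of vertices of the white polyhedron, area of a black face $=$ singular curvature of the white metric at the corresponding cone point, flip of a black face $=$ conjugation by the vertex, hence a rotation of $S^2$ with a well-defined axis) is correct and consistent with Propositions~\ref{prop: proj sph} and~\ref{pr:white}. But the heart of your argument --- inverting $\Phi$ by producing, for each configuration of $n$ half-rays from a point of $S^3$ and each admissible $K$, a \emph{unique} convex polyhedron with vertices on those rays and prescribed curvatures $k_i$ --- is exactly the spherical Alexandrov prescribed-curvature problem that this paper explicitly leaves open (Question~\ref{qu}). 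The ``Interlude: the case of the sphere'' carries out the Jacobian computation you would need for your Cauchy--Alexandrov infinitesimal rigidity step, and it does \emph{not} close: the off-diagonal entries satisfy $a_{xy}>0$, but the diagonal entries are $a_{xx}=-\sum_{y\neq x}\cos\ell_{xy}\,a_{yx}$, whose sign depends on whether the edge lengths exceed $\pi/2$, so the matrix is not diagonally dominant and its non-degeneracy is unknown. (The paper proves the analogous statement only in $AdS_3$, Theorem~\ref{thm: presc curvADS}, precisely because there the hyperbolic trigonometry makes the matrix strictly diagonally dominant.) Your fallback of a concave volume-type variational functional is also blocked: as the paper remarks, $a_{xy}\neq a_{yx}$ in general, so the curvature map is not a gradient. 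So the local injectivity and the uniqueness you invoke are genuine gaps, not technical chores.

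The paper's actual proof avoids all of this with a two-step soft reduction: (i) taking the white metric gives a bijection from $\cT^{r,1}_n(K)$ to the space $\mathcal{M}^1_n(K)$ of spherical metrics on $S^2$ with $n$ cone points of curvatures $k_1,\dots,k_n$, the inverse being supplied by the already-proved Theorem~S\ref{tm:sphere} (Alexandrov's realization theorem for the induced metric, where the conditions of Definition~\ref{defS:K} and $n\neq 2$ enter via Theorem~\ref{thm: tro2}); and (ii) the Luo--Tian theorem \cite{LT92} (existence by Troyanov) parameterizes $\mathcal{M}^1_n(K)$ by conformal structures on the $n$-punctured sphere, i.e.\ by configurations of $n$ distinct points on $S^2$. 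Note that the resulting parameterization is by the \emph{conformal positions of the cone points of the white metric}, not by the rotation axes of the flips of the black faces; the two are different maps, and only the former is currently known to be bijective. If you want to salvage your approach, you would first have to answer Question~\ref{qu} affirmatively.
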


\subsubsection{Hyperbolic surfaces}

\begin{defi}
Let $n\geq 1$, let 
$k_1, \cdots, k_n\in \R_{>0}$ of sum less than $2\pi|\chi(S)|$, let $(S,h)$
be a hyperbolic surface, and let $K=(k_1, \cdots, k_n)$. We denote
by $\cT^{r,-1}_{S,h,n}(K)$ the space of right flippable tilings on $(S,h)$ with 
$n$ black faces of area $k_1,\cdots, k_n$.
\end{defi}

To give a satisfactory hyperbolic analog of Theorem~S\ref{tm:Sarea} we need
to consider a special class of flippable tilings. This notion is
significant only in the hyperbolic or the Euclidean settings. 

\begin{defi} \label{def sym til}
A right flippable tiling $T_r$ of a surface $(S,g_r)$ is {\em symmetric} if the 
metric $g_l$ underlying the left tiling obtained by flipping $T_r$ is isotopic
to $g_r$. We call $\cbT^{r,-1}_{S,h,n}$ the space of symmetric right 
flippable tilings of a hyperbolic surface $(S,h)$ with $n$ black faces, and 
its intersection with $\cT^{r,-1}_{S,h,n}(K)$ is denoted by $\cbT^{r,-1}_{S,h,n}(K)$ .
\end{defi}

Clearly a spherical flippable tiling is always symmetric, since there is
only one spherical metric on $S^2$, up to isometry.

\begin{Hthm}\label{thm: paramADS}
In the setting of the previous definition,
there is a natural parameterization of $\cbT^{r,-1}_{S,h,n}(K)$ by the space of configurations
of $n$ distinct points on $S$.
\end{Hthm}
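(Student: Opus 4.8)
The plan is to follow the scheme already used for the spherical statement Theorem~S\ref{tm:Sarea}, replacing the round sphere $S^3$ by the anti-de Sitter space $AdS_3$ and adding equivariance under $\pi_1(S)$. First I would make the correspondence between symmetric tilings and $AdS_3$-geometry explicit. By Definition~\ref{def sym til} the flip of $T_r$ leaves the hyperbolic metric isotopic to $h$, which means that the holonomy representation attached --- via Theorem~H\ref{tmH:base} and the duality between equivariant polyhedral surfaces --- to the flipped pair is \emph{Fuchsian}: the left and right hyperbolic structures both equal $h$. Thus a symmetric tiling in $\cbT^{r,-1}_{S,h,n}(K)$ corresponds to a convex spacelike polyhedral surface $\Sigma$ in the Fuchsian $AdS_3$-manifold $M_h$ determined by $h$, invariant under $\pi_1(S)$, whose quotient has exactly $n$ vertices. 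Under this dictionary the $n$ black faces are dual to the $n$ vertices of $\Sigma$, and the prescribed area $k_i$ of the $i$-th black face equals the angle excess $\theta_i-2\pi$ at the $i$-th vertex of the cone-metric obtained by gluing the white faces only; that metric is hyperbolic with cone angles $2\pi+k_i$ and total area $2\pi|\chi(S)|-\sum_i k_i$, positive exactly because $\sum_i k_i<2\pi|\chi(S)|$ (this is the local and global Gauss--Bonnet computation on the black faces).

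I would then define the parameterization map $\cbT^{r,-1}_{S,h,n}(K)\to \mathrm{Conf}_n(S)$ by sending a symmetric tiling to the configuration of the images, on the Fuchsian surface (isometric to $(S,h)$), of the $n$ vertices of $\Sigma$ --- equivalently, the directions of the support planes of the black faces. Injectivity is a rigidity statement: two convex spacelike $\pi_1(S)$-invariant polyhedral surfaces in $M_h$ with the same vertex directions and the same angle excesses $k_i$ coincide. I would obtain it from the infinitesimal rigidity of convex polyhedral surfaces in $AdS_3$, proved exactly as in the spherical case from the duality and from the fact --- already used for Theorems~S\ref{tmS:base} and~H\ref{tmH:base} --- that $AdS_3$ is isometric to a Lie group with a bi-invariant metric; equivalently, it follows from the uniqueness (asserted in the hyperbolic case) of the symmetric tiling recovered from the white cone-metric.

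For surjectivity I would run Alexandrov's continuity method. Regard $\Phi$ as a map from the open manifold of admissible equivariant convex polyhedral surfaces with $n$ vertices and fixed angle excesses $K$ to $\mathrm{Conf}_n(S)$; both have dimension $2n$ (the $n$ vertices carry $3n$ parameters, cut down by the $n$ curvature constraints). Infinitesimal rigidity shows that $d\Phi$ is everywhere injective between spaces of equal dimension, so $\Phi$ is a local homeomorphism by the inverse function theorem, and the previous step shows it is globally injective. It then remains to prove that the image is open and closed in the connected space $\mathrm{Conf}_n(S)$; openness is the local homeomorphism property, so everything reduces to \emph{closedness}, i.e.\ properness of $\Phi$.

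Properness is the main obstacle. I must rule out every degeneration of $\Sigma$ while the $n$ directions stay in a compact region of $\mathrm{Conf}_n(S)$ (no two points colliding): I have to bound $\Sigma$ away from the boundary of $M_h$, keep it spacelike and convex, and prevent faces or edges from collapsing. The fixed areas $k_i\in\R_{>0}$ forbid the vanishing of a black face, while the constraint $\sum_i k_i<2\pi|\chi(S)|$ keeps the white cone-metric of positive, bounded area; combined with the fact that its cone angles $2\pi+k_i$ are bounded away from $2\pi$, standard compactness for hyperbolic cone-surfaces with controlled angles and systole yields a uniform lower bound on the distances between vertices and an upper bound on the diameter, which translate, through the equivariance, into the required a priori estimates on $\Sigma$. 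This compactness closes the continuity argument and gives the announced homeomorphism $\cbT^{r,-1}_{S,h,n}(K)\cong\mathrm{Conf}_n(S)$.
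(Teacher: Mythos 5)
Your overall dictionary is the right one and matches the paper: symmetric tilings correspond to Fuchsian convex space-like polyhedral surfaces (Subsubsection~\ref{subsub fuchs sym}), the parameterizing map is the orthogonal projection of the vertices to the invariant plane $H$, i.e.\ a configuration of $n$ points on $H/G\simeq S$, and the statement reduces to an Alexandrov-type prescribed-curvature theorem for Fuchsian polyhedral surfaces in $AdS_3$. But the way you run the continuity method has a real gap. You take as target the configuration space $\mathrm{Conf}_n(S)$, which is connected but \emph{not} simply connected, so a proper local homeomorphism onto it is only a covering map; to conclude you invoke global injectivity, but your justification for it is circular: infinitesimal rigidity gives only \emph{local} injectivity, and the appeal to the uniqueness of the symmetric tiling with a given cone-metric does not apply, since two surfaces with the same vertex configuration and the same curvatures $K$ need not a priori induce the same cone-metric. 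The paper avoids this trap by turning the problem around: it \emph{fixes} the configuration, i.e.\ the family $R$ of half-rays orthogonal to $H$, and studies the map $C:\mathcal{P}(G,R)\to\mathcal{K}(n)$ sending the $n$ heights to the $n$ curvatures. The domain is shown to be contractible and the target $\mathcal{K}(n)=\{(k_1,\dots,k_n)\in(\mathbb{R}_-)^n\,\vert\,\sum k_i>2\pi\chi(S)\}$ is convex, hence simply connected, so Lemma~\ref{lem: cont meth} (local injectivity plus properness implies homeomorphism) applies cleanly; Theorem~\ref{thm: presc curvADS} then yields, for each configuration, a unique surface with curvatures $K$, which is exactly the claimed parameterization.

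A second gap is your assertion that the needed infinitesimal rigidity is ``proved exactly as in the spherical case from the duality and the bi-invariant metric.'' This is backwards: the paper explicitly leaves the spherical analogue open (Question~\ref{qu}), because in the spherical computation it is not clear that the Jacobian matrix is non-degenerate, and the AdS case is precisely where the technical work lies. Local injectivity (Lemma~\ref{lem : jac}) rests on the hyperbolic--de Sitter trigonometry of Subsection~\ref{subsec: jac}, a convexity lemma controlling the sign of $\sinh\alpha_1+\sinh\alpha_2$ across edges, a triangulation device handling possible changes of combinatorics, and the verification that the matrix $(a_{xy})$ is strictly diagonally dominant; none of this is a formal consequence of the Lie group structure. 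Finally, your properness discussion addresses a different map from the one that must be proper: the paper's Lemma~\ref{lem: propr} shows that the heights stay in $(0,\pi/2)$ when the curvatures converge inside $\mathcal{K}(n)$, using convexity and Gauss--Bonnet, rather than compactness of families of cone-surfaces. As written, the proposal identifies the correct framework but does not constitute a proof.
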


The proof is based on an analog in the AdS space of a classical Alexandrov
Theorem on polyhedra with given curvature at its vertices, see Section~\ref{sc:alexandrov}.

\subsection{The black and white metrics of a flippable tiling}

\subsubsection{Definitions.}\label{subsub:white}

Consider a flippable tiling of a constant curvature surface $(S,g)$. Let 
$F_b,F_b'$ be two adjacent black faces. Then $F_b$ and $F_b'$ are
adjacent to an edge $e$ which is also adjacent to two white faces $F_w$ 
and $F_w'$. The definition of a flippable tiling shows that $F_w\cap e$ and 
$F_w'\cap e$ have the same length. So there is a well-defined way to glue
$F_w$ to $F_w'$ along their intersections with $e$, isometrically, preserving
the orientation of $e$. Idem for $F_b$ and $F_{b}'$.

\begin{defi}
Let $(S,g_l)$ be a closed surface with a constant curvature metric, and let 
$T_l$ be a left flippable tiling on $(S,g_l)$. The {\em black metric} $h_b$
of $T_l$ is a constant curvature metric with cone singularities obtained by
gluing isometrically the black faces of $T_l$ along their common edges.  
The {\em white metric} $h_w$ of $T_l$ is the metric with cone singularities
obtained by similarly gluing the white faces of $T_l$.
\end{defi}

\begin{remark}
If $(S,g)$ is a hyperbolic surface, then $h_b$ is a hyperbolic metric with 
cone singularities of negative singular curvature (angle larger than $2\pi$).
Each cone singularity $v$ of $h_b$ is associated to a white face $f$ of $T_l$, 
and the angle excess at $v$ is equal to the area of $f$. 

If $(S,g)$ is a flat surface, then $h_b$ is a flat surface (with no cone
singularity). 

If $(S,g)$ is a spherical surface, then $h_b$ is a spherical surface with
cone singularities of positive singular curvature (angle less than $2\pi$). The 
singular curvature at a cone singularity is equal to the area of the 
corresponding white face of $T_l$.
\end{remark}

\subsubsection{Spherical surfaces.}

In the sphere, flippable tilings are in one-to-one correspondence with 
spherical metrics with cone singularities of positive singular curvature. 

\begin{Sthm} \label{tm:sphere}
Let $g$ be a spherical metric on $S^2$ with $n>2$ cone singularities of angle 
less than $2\pi$. There exists a unique left flippable tiling of $S^2$
which has $g$ as its black metric.
\end{Sthm}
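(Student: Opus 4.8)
The plan is to pass to the three-dimensional sphere $S^3$ and realize the given cone metric $g$ as the induced metric on a convex polyhedron. First I would invoke the spherical analog of Alexandrov's realization theorem: a spherical cone metric on $S^2$ with all cone angles less than $2\pi$, subject to the Gauss--Bonnet constraint forced by $K\equiv 1$, namely $\mathrm{Area}(g)+\sum_v(2\pi-\theta_v)=4\pi$, is induced on the boundary of a convex polyhedron $P\subset S^3$, unique up to isometry of $S^3$. The hypotheses $n>2$ and $\theta_v<2\pi$ are exactly what guarantees that such a $P$ exists and is non-degenerate. The faces of $P$ are spherical polygons, and these will be the black faces of the tiling I build.

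Next I would introduce the polar dual $P^*\subset S^3$. Under polar duality each face $F$ of $P$, lying in a great $2$-sphere with pole $n_F$, becomes a vertex $n_F$ of $P^*$; each vertex $v$ of $P$ becomes a face $F_v^*$ of $P^*$; and each edge $e$ of $P$ becomes an edge $e^*$ of $P^*$. The key spherical duality relations --- that the length of $e^*$ equals $\pi$ minus the dihedral angle of $P$ along $e$, and that the dihedral angle of $P^*$ along $e^*$ equals $\pi$ minus the length of $e$ --- are precisely the special geometric properties coming from the bi-invariant metric on $S^3\cong SU(2)$. In particular the area of $F_v^*$ equals the angle deficit $2\pi-\theta_v$ at $v$, matching the area/curvature relation for white faces recorded in the Remark of Section~\ref{subsub:white}. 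The faces $F_v^*$ will be the white faces.

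I would then assemble the tiling by gluing the black faces (the faces of $P$) and the white faces (the faces $F_v^*$ of $P^*$) along the pattern dictated by the flag structure of $P$: along each edge $e$ of $P$ the two adjacent black faces meet the two white faces $F_{v_1}^*,F_{v_2}^*$ dual to the endpoints of $e$. The duality relations force the two black matching lengths, and separately the two white matching lengths, to agree, so the flippable edge conditions hold; the forward/backward placement then produces a \emph{left} flippable tiling, the opposite orientation convention giving the right one. The crucial point to verify is that the angles of the black and white faces meeting at each vertex of the new tiling sum to exactly $2\pi$: this is again a direct consequence of the spherical duality identities, and it shows that the glued surface is a smooth (singularity-free) spherical metric on a topological sphere, hence isometric to the round $S^2$. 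By construction the black faces are glued exactly as in $\partial P$, so the black metric of this tiling is $g$.

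The main obstacle is this last assembly step --- checking that the pieces close up with cone angle $2\pi$ at every vertex and that the three flippable conditions (decomposition of each edge into one black and one white piece on each side, the forward/backward placement, and the equality of the two black and of the two white matching lengths) all hold simultaneously. Everything here rests on the duality identities of the previous paragraph, so the real work is to convert the three-dimensional dihedral-angle and length data of $P$ and $P^*$ into the two-dimensional incidence data of the tiling. For uniqueness I would run the construction backwards: a left flippable tiling with black metric $g$ glues its black faces to the cone surface $(S^2,g)$, which by the uniqueness half of the spherical Alexandrov theorem is realized by a single convex polyhedron $P$; the white faces and their placement are then forced to be the dual faces $F_v^*$ arranged according to the flag structure of $P$, so the entire tiling is determined by $g$.
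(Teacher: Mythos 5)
Your overall strategy is the paper's: realize $g$ as the induced metric on the boundary of a convex polyhedron $P\subset S^3$ via the spherical Alexandrov theorem, and obtain the white faces from the polar dual $P^*$. Where you differ is in how the polyhedron is converted into a tiling of the round sphere. You propose an intrinsic assembly --- glue the faces of $P$ and of $P^*$ according to the flag structure, check that the cone angle at every vertex of the new complex is $2\pi$ using the duality identities, and conclude by a developing-map argument that the glued surface is the round $S^2$. The paper instead uses the left and right projections coming from the identification $S^3\cong SU(2)$ (Lemma~\ref{lem: proj angle} and Proposition~\ref{prop: proj sph}): each face of $P$ lying in a plane $a^*$ is pushed into $e^*\cong S^2$ by multiplication by $a^{-1}$, so the pieces land directly on the round sphere, the closing-up at the vertices is automatic, and the fact that they cover all of $S^2$ with disjoint interiors is a Gauss--Bonnet computation. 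Your route is viable, but the step you yourself call ``the real work'' --- verifying the edge decompositions, the forward/backward placement, and the $2\pi$ condition from the three-dimensional data --- is only asserted; it amounts to re-proving Proposition~\ref{prop: proj sph} by hand, and with the theorem's machinery already in place the existence half is literally the two lines $T_l:=T_r(P_g)^*$.

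The more substantive omission is in uniqueness. You argue that any left flippable tiling with black metric $g$ must have its black faces arranged as the faces of the Alexandrov polyhedron $P_g$ and its white faces ``forced'' to be the dual faces. But to know that the black-face decomposition of $(S^2,g)$ coincides with the face decomposition of $\partial P_g$ (no edge of the tiling could be flat in the glued metric, no face of $P_g$ could be subdivided), and that the white faces must close up into $P_g^*$, you need the converse construction: from an arbitrary flippable tiling, produce a convex polyhedron in $S^3$ whose faces are its black (or white) faces and whose polar links are the faces of the other color. This is Proposition~\ref{pr:white} in the paper, and it is not formal --- developing the faces into $S^3$ gives only a locally convex closed polyhedral surface, and one needs a global convexity argument (van Heijenoort, do Carmo--Warner) to conclude it bounds a convex body, after which Alexandrov uniqueness applies to pin down the tiling. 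Without some version of that step, ``the entire tiling is determined by $g$'' does not follow.
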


\subsubsection{Hyperbolic surfaces.}

We now consider a closed surface $S$ of negative Euler characteristic.
For those hyperbolic surfaces the analog of Theorem~S\ref{tm:sphere}
involves symmetric tilings.

\begin{Hthm} \label{tm:symmetric}
Let $h_+$ be a hyperbolic metric with cone singularities of negative
singular curvature on a closed surface $S$. There is a unique symmetric
flippable tiling $T_+$ on a hyperbolic surface such that $h_+$ is 
the black metric of $T_+$.
\end{Hthm}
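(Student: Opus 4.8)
The plan is to reduce Theorem~H\ref{tm:symmetric} to the correspondence between symmetric flippable tilings and polyhedral surfaces in $AdS_3$ that underlies the proofs of Theorems~H\ref{tmH:base} and H\ref{thm: paramADS}. The key observation is that for a \emph{symmetric} tiling the flip preserves the isotopy class of the metric, which in the $AdS_3$ picture corresponds to a polyhedral surface invariant (up to isotopy) under the duality used in Section~\ref{sc:ads}. Concretely, a symmetric flippable tiling with black metric $h_+$ should arise from a convex (future-convex, say) polyhedral surface $\Sigma$ in a globally hyperbolic $AdS_3$ manifold whose two ``sides'' --- the black and white metrics --- are exchanged by the $AdS$ duality, with the induced metric on the black faces giving precisely $h_+$. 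So the statement to prove is an existence-and-uniqueness result for such a self-dual polyhedral surface realizing a prescribed induced metric $h_+$ with cone singularities of negative curvature.

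First I would make precise how the black metric $h_+$ determines the combinatorial and metric data of the tiling. The cone singularities of $h_+$ are in bijection with the white faces (by the Remark following the definition of the black metric, each cone point of negative singular curvature corresponds to a white face whose area equals the angle excess). Thus the areas $k_1,\dots,k_n$ of the white faces are read off directly from the angle excesses of $h_+$, and $n$ is the number of cone points. This lets me place the problem inside the framework of Theorem~H\ref{thm: paramADS}: I would use the parameterization of $\cbT^{r,-1}_{S,h,n}(K)$ by configurations of $n$ distinct points on $S$, where the configuration records the positions of the cone singularities (equivalently, the white faces). The target map sends a symmetric tiling to its black metric $h_+$, and I want to show this map is a bijection onto the space of hyperbolic cone metrics with negative singular curvature.

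The core of the argument is the Alexandrov-type theorem in $AdS_3$ mentioned after Theorem~H\ref{thm: paramADS}. I would invoke the existence half to produce, from the prescribed cone metric $h_+$ (viewed as the induced metric on a polyhedral surface, with cone angles $>2\pi$ encoding the curvature data), a convex polyhedral surface in the appropriate $AdS_3$ manifold; its intrinsic structure recovers the black faces glued exactly as $h_+$ prescribes, and the $AdS$ duality construction of Section~\ref{sc:ads} then produces the associated white faces and hence the full symmetric tiling $T_+$. For uniqueness I would appeal to the rigidity (uniqueness) half of the same Alexandrov-type theorem: two symmetric tilings with the same black metric give two convex polyhedral surfaces with isometric induced metrics, which must coincide, and since the tiling is recovered from the surface together with its dual, the tiling is determined. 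The fact that the curvature is negative (angle excess, not defect) is what forces the $AdS$ rather than spherical setting and guarantees the relevant convexity.

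The main obstacle I expect is establishing that the correspondence between symmetric tilings and self-dual $AdS$ polyhedral surfaces is genuinely a bijection at the level of \emph{metrics}, rather than of full tiling data --- that is, checking that prescribing only the black metric $h_+$ (and not, say, the white metric or the configuration of vertices separately) suffices to pin down the tiling uniquely. This is precisely where the symmetry hypothesis must do its work: without it, the black metric alone would not determine the flip, but symmetry identifies the two sides of the construction and collapses the extra freedom. I would need to verify carefully that the Alexandrov-type theorem applies to the specific class of $AdS$ manifolds arising here (globally hyperbolic, with the right cone data) and that its induced-metric uniqueness statement transfers to uniqueness of the tiling, ruling out the possibility that distinct self-dual surfaces share an induced metric. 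Handling the degenerate or boundary cases (small areas, or cone angles approaching $2\pi$) so that one stays within the interior of the relevant moduli space would be the remaining technical point.
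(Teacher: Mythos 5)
Your overall strategy --- translate the tiling into an equivariant convex polyhedral surface in $AdS_3$, then use an Alexandrov-type realization theorem for existence and its rigidity half for uniqueness --- is the right one, and it is what the paper does by repeating the spherical argument of Subsection~\ref{sub: proofS4} in the Fuchsian setting. But two of your key ingredients are misidentified, and the second is fatal as written. The first: a symmetric tiling does \emph{not} correspond to a polyhedral surface ``invariant under the duality of Section~\ref{sc:ads}''. The polar duality $P\mapsto P^*$ exchanges the black and white polyhedra of \emph{any} tiling, symmetric or not; it is unrelated to the flip. The flip exchanges the \emph{left} and \emph{right} projections, i.e.\ the two factors $\rho_l,\rho_r$ of the holonomy, and Subsection~\ref{subsub fuchs sym} shows that $T_r(P)$ is symmetric if and only if $P$ is \emph{Fuchsian}, i.e.\ $\rho$ fixes a totally geodesic space-like plane (equivalently $\rho_l$ is conjugate to $\rho_r$). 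You have conflated the black/white duality with the left/right one, and with a ``self-dual surface'' characterization there is no existence or uniqueness theorem available to invoke.

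The second, more serious issue: the Alexandrov-type theorem ``mentioned after Theorem~H\ref{thm: paramADS}'' is Theorem~\ref{thm: presc curvADS} of Section~\ref{sc:alexandrov}. Its input is a Fuchsian group, a prescribed invariant family of half-rays, and $n$ curvatures, and its output is a surface with those curvatures at vertices on those rays; it says nothing about the induced metric. The black metric $h_+$ carries far more data than the $n$ angle excesses (it is the entire intrinsic geometry of the glued black faces), and conversely it does not supply the rays, so Theorem~\ref{thm: presc curvADS} can neither produce nor rigidify a surface with induced metric $h_+$; the detour through the point-configuration parameterization of Theorem~H\ref{thm: paramADS} does not repair this, since that parameterization fixes only areas. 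What is actually needed --- and what the paper uses --- is Theorem~\ref{thm: pol fuchs} from \cite{Fil09}: a hyperbolic cone metric with negative singular curvature is the induced metric on a Fuchsian equivariant polyhedral surface, unique up to global isometry. With that theorem the proof collapses to the two lines of Subsection~\ref{sub: proofS4}: realize $h_+$ as the induced metric on a Fuchsian $P$, take the tiling whose black faces are the faces of $P$ (which is symmetric because $P$ is Fuchsian), and deduce uniqueness of the tiling from uniqueness of $P$.
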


The proof of Theorem~H\ref{tm:symmetric}
can be found in Section~\ref{sec:proof sym}.

\subsection{Flippable tilings and polyhedral surfaces}

The results on flippable tilings presented here are consequences, sometimes
very direct, of a correspondence between flippable tilings on
the sphere (resp. a hyperbolic surface) and convex, polyhedral surfaces
in the 3-dimensional sphere (resp. convex, space-like polyhedral surfaces in the
3-dimensional anti-de Sitter space). 

To each convex polyhedron $P$ in $S^3$, we can associate a left flippable tiling
$T$ on $S^2$. The black faces of $T$ correspond to the faces of $P$, while
the white faces of $T$ correspond to the vertices of $P$, so that the incidence
graph of $T$ is the 1-skeleton of $P$. More geometrically, the black faces
of $T$ are isometric to the faces of $P$, while the white faces of $T$
are isometric to the spherical duals of the links of the vertices of $P$.

The same relation holds between flippable tilings on a hyperbolic surface
on one hand, and equivariant polyhedral surfaces in $AdS_3$ on the other.
Symmetric flippable tilings then correspond to Fuchsian polyhedral surfaces. 
The results on the black metrics on tilings correspond to statements on the
induced metrics on polyhedra, going back to Alexandrov \cite{Ale05} in
the spherical case and proved more recently by the first author \cite{fil07,Fil09} 
in the hyperbolic/AdS case. The results on tilings with black faces of fixed
area correspond to statements on polyhedra with vertices having fixed
curvature, which are proved in Section \ref{sc:alexandrov}.

\subsection{Remarks about Euclidean surfaces}

The reader has probably noticed that the results described above
concern hyperbolic surfaces and the sphere, but not flat metrics on
the torus. After a first version of the present paper circulated, it
was noticed by Fran\c{c}ois Gu\'eritaud that 
the  analog of  theorems S\ref{tmS:base} and  H\ref{tmH:base}
is true for these metrics. The proof can be done in an intrinsic way, and moreover 
the flip can be described as a continuous path in the Teichm\"uller space of the torus \cite{guer}. 

Maybe the 3-dimensional spherical or AdS
arguments used here are still valid in this case, considering the space of isometries of the Euclidean plane.
We hope that this question will be the subject of
further investigations.

\subsection*{Acknowledgements}
The authors would like to thank Fran\c{c}ois Gu\'eritaud for useful conversations related to the results
presented here.

\section{Spherical polyhedra}
\label{sc:sphere}

\subsection{Basics about the sphere $S^3$}\label{sub2.1}

\subsubsection{Spherical polyhedra}\label{sub: def pol sph}

Let $S^n$ be the unit sphere in $\mathbb{R}^{n+1}$, endowed with the usual scalar product $\langle \cdotp,\cdotp \rangle$. 
\begin{enumerate}
\item A (spherical) \emph{convex polygon} is a compact subset of $S^2$ obtained by a finite intersection of
closed hemispheres, and contained in  an open hemisphere of $S^2$. 
We suppose moreover that a convex polygon has non-empty interior.
\item  A  \emph{digon} is a spherical polygon is the intersection of two closed hemispheres,
having two vertices and two edges of lengths $\pi$, making an angle  $<\pi$.
\item A (spherical) \emph{convex  polyhedron} is a  compact subset of $S^3$ obtained 
as a finite intersection of half-spaces of the sphere, and contained in  an open hemisphere of $S^3$. We suppose moreover that a convex polyhedron has a non-empty interior. Faces of a convex polyhedron are convex polygons.
\item A \emph{dihedron} satisfies the definition of convex polyhedron except that it has empty interior.
It is a convex polygon $p$ in a totally geodesic plane in $S^3$. We consider its boundary to be
made of two copies of $p$ glued isometrically along their boundary, so that this boundary is
homeomorphic to a sphere.
\item A \emph{hosohedron} is the non-empty intersection
of a finite number of half-spaces with only two vertices. The vertices have to be antipodal, and faces of a hosohedron are digons. A hosohedron is contained in the closure of 
an open hemisphere, it is not 
a convex polyhedron  in regard of the definition above. 
We also call digon the degenerated hosohedron made of two copies of the same digon.
\end{enumerate}

Up to a global isometry, we can consider that all the objects defined above are contained in 
the (closure of) the same open hemisphere $S^3_+$. 

 The \emph{polar link} $\mathrm{Lk}(x)$ of a vertex $x$ of a convex polyhedron $P$ is the convex spherical polygon in $T_xS^3$ given by the outward unit normals of the faces of $P$ meeting at $x$. The lengths of the edges of $\mathrm{Lk}(x)$ are the exterior dihedral angles of $P$ at the edges having $x$ as endpoint, and the interior angles of $\mathrm{Lk}(x)$ are $\pi$ minus the interior angles on the faces of $P$ at $x$. 
The definition of polar link extends to dihedra and hosohedra.

\subsubsection{Polar duality}

Let $x\in S_+^3$. We denote by $x^*$ the hyperplane orthogonal to the vector $x\in\mathbb{R}^4$ for $\langle \cdotp,\cdotp \rangle$. We also denote by $x^*$ the totally geodesic surface intersection between this hyperplane and $S^3$. 
Conversely, if $H$ is a totally geodesic surface of $S^3$, we denote by $H^*$ the 
unique point of $S^3_+$ orthogonal as a vector to the hyperplane $H$ 
(up to an isometry, we can avoid considering $H$ such that its orthogonal unit
vectors belong to the boundary of $S^3_+$).

The \emph{polar dual} $P^*$  of a convex polyhedron $P$ is the convex polyhedron 
defined as the intersection of the half-spaces $\{y\vert \langle x,y \rangle >0  \}$ for each vertex $x$ of $P$.
Each face of $P$ is contained in the dual $x^*$ of a vertex $x$ of $P$.
Equivalently,  $P^*$ is a convex polyhedron whose vertices are the $H^*$ for $H$ a totally geodesic surface  containing a face of $P$. It follows that $(P^*)^*=P$ and
that the polar link $\mathrm{Lk}(x)$ of a vertex $x$ of $P$ is isometric to the face of $P^*$ dual to $x$ and vice-versa.

The definitions of the polar link and polar dual applied to  dihedra and hosohedra indicate that they are in duality with
each other (excepted for digons which are dual of digons).

\subsubsection{A projective model}

Up to a global isometry, we can consider $S^3_+$ as the open subset of  $S^3$ made 
of vectors of $\mathbb{R}^4$ with positive first coordinate. 
By the central projection which sends  $e:=(1,0,0,0)$ to the origin,  $S^3_+$ is identified with  an affine model of the projective space $P^3(\mathbb{R})$ (the plane at infinity is $e^*$).
In this affine model, a spherical convex polyhedron is drawn as an Euclidean convex polytope, 
a dihedron is drawn as a convex Euclidean polygon in a plane, a hosohedron is a convex polyhedral infinite cylinder, 
and a digon is the strip between two parallel lines. 

\subsubsection{Multiplicative structure on the sphere}

The following isometry gives a multiplicative structure on $\mathbb{R}^4$:

$$ (\mathbb{R}^4, \langle \cdotp,\cdotp \rangle)\rightarrow (\mathcal{M},\mathrm{det}),
\quad(x_1,x_2,x_3,x_4)\mapsto \begin{pmatrix}
         x_1+ix_2 & x_3+ix_4 \\
         -x_3+ix_4 & x_1-ix_2
\end{pmatrix}.
 $$
where $\mathcal{M}$ is the space of matrix 
$\begin{pmatrix}
         \alpha & -\overline{\beta} \\
         \beta & \overline{\alpha}
\end{pmatrix}.
 $ 
 Note that the neutral element is $e$. 
It follows that $S^3$ is isometric to $SU(2)$, and that multiplication on the right or on the left
by an element of $S^3$ is an isometry. As the kernel 
of the classical homomorphism $SU(2)\rightarrow SO(3)$ is $\{ -\mbox{Id}, +\mbox{Id}\}$, $P^3(\mathbb{R})$ is 
isometric to $SO(3)$, and then $SO(3)$ acts by isometries on $P^3(\mathbb{R})$ by left and right multiplication.

\begin{lemma}\label{lem: basic mult}
 \begin{enumerate}
  \item The left multiplication by $x$ sends $y^*$ to $(xy)^*$.
\item If $x\in e^*$, then $\langle x,y\rangle=-\langle x,y^{-1}\rangle$. In particular 
$\langle x,y \rangle=0$ if and only if $\langle x,y^{-1} \rangle=0$.
 \end{enumerate}
\end{lemma}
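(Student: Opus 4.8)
The plan is to work entirely in the matrix model $(\mathcal{M},\det)\cong(\mathbb{R}^4,\langle\cdot,\cdot\rangle)$, identifying each point $x\in S^3$ with the corresponding matrix $X\in SU(2)\subset\mathcal{M}$ (the unit vectors being exactly the matrices of determinant $1$) and $e=(1,0,0,0)$ with the identity matrix $I$. The one computational input I would record first is a formula for the bilinear form in terms of traces: since $\langle\cdot,\cdot\rangle$ is the polarization of the quadratic form $\det$, a direct expansion of $\det(X+Y)$ gives
\[
\langle x,y\rangle=\tfrac12\,\tr\bigl(\mathrm{adj}(X)\,Y\bigr),
\]
where $\mathrm{adj}$ denotes the cofactor matrix. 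I would also note the two elementary $2\times2$ identities that do all the work: Cayley--Hamilton in the form $\mathrm{adj}(X)=\tr(X)I-X$ (whence $\tr(\mathrm{adj}(X))=\tr(X)$), and, for $y\in S^3$ where $\det Y=1$, the consequence $Y^{-1}=\mathrm{adj}(Y)=\tr(Y)I-Y$.

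For the first assertion I would simply invoke the fact, already recorded above, that left multiplication by $x\in S^3$ is an isometry of $(\mathbb{R}^4,\langle\cdot,\cdot\rangle)$; indeed $Z\mapsto XZ$ is linear and $\det(XZ)=\det X\det Z=\det Z$, so it preserves $\det$ and hence the bilinear form. Thus $\langle z,y\rangle=0$ if and only if $\langle xz,xy\rangle=0$. Since $y^*=\{z\in S^3:\langle z,y\rangle=0\}$, the map $z\mapsto xz$ carries $y^*$ bijectively onto $\{w\in S^3:\langle w,xy\rangle=0\}=(xy)^*$, which is exactly the claim.

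For the second assertion, the hypothesis $x\in e^*$ means $\langle x,e\rangle=0$; since $e\mapsto I$, the trace formula together with $\tr(\mathrm{adj}(X))=\tr(X)$ turns this into $\tr(X)=0$. Now for $y\in S^3$ I would substitute $Y^{-1}=\tr(Y)I-Y$ into the trace formula:
\[
\langle x,y^{-1}\rangle=\tfrac12\,\tr\bigl(\mathrm{adj}(X)\,Y^{-1}\bigr)=\tfrac12\tr(Y)\,\tr(\mathrm{adj}(X))-\tfrac12\tr\bigl(\mathrm{adj}(X)\,Y\bigr).
\]
The first term vanishes because $\tr(\mathrm{adj}(X))=\tr(X)=0$, and the second equals $-\langle x,y\rangle$, giving $\langle x,y\rangle=-\langle x,y^{-1}\rangle$. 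The ``in particular'' statement is then immediate.

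The routine but essential obstacle is pinning down the trace formula for $\langle\cdot,\cdot\rangle$ and the accompanying adjugate identities with the correct normalizations; once these are in place, both parts are one-line consequences and no genuine difficulty remains. One could alternatively phrase everything through the relation $\mathrm{adj}(X)=X^{*}$, valid for every $X\in\mathcal{M}$, but the $\det$-polarization route keeps the computation self-contained.
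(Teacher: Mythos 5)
Your proof is correct and follows essentially the same route as the paper: part (1) is the identical "left multiplication is an isometry" argument, and part (2) is the paper's coordinate check ($x\in e^*\Leftrightarrow x_1=0$ and $y^{-1}=(y_1,-y_2,-y_3,-y_4)$) recast invariantly via the polarization identity $\langle x,y\rangle=\tfrac12\,\tr(\mathrm{adj}(X)Y)$ together with $\mathrm{adj}(Y)=\tr(Y)I-Y$. The trace/adjugate machinery is harmless and correctly normalized, but it encodes exactly the same two elementary facts the paper verifies directly in coordinates.
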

\begin{proof}
  \begin{enumerate}
  \item If $z\in y^*$ then $\langle z, y\rangle=0$, and the left multiplication is an isometry, so
$\langle xz, xy \rangle=0 $, i.e.~the image of $z$ by the left multiplication by $x$ belongs to $(xy)^*$.
\item It is easy to check that $x=(x_1,x_2,x_3,x_4)\in e^*$ is equivalent to $x_1=0$ and that if 
$y=(y_1,y_2,y_3,y_4)$ then $y^{-1}=(y_1,-y_2,-y_3,-y_4)$.
 \end{enumerate}
\end{proof}

\subsection{Flippable tilings on the sphere and convex polyhedra}\label{sub2.2}

\subsubsection{The left and right projections of an angle}

Let $a,b$ be in $S^3_+$, with $a,b,e$ and their antipodals mutually distinct.
Let $A$ be the angle formed by $a^*$ and $b^*$, with edge $E$:
\begin{eqnarray*}
 \ A&=&a^*_c\cup b^*_c\cup E \\
\ &=& \{x \in S^3\vert \langle x,a\rangle=0,\langle x,b\rangle > 0 \}\cup\{x \in S^3\vert \langle x,b\rangle=0,\langle x,a\rangle > 0 \}\cup \{x\in S^3\vert \langle x,a \rangle=\langle x,b \rangle=0 \}.
\end{eqnarray*}

The \emph{left projection} of the angle $A$ is the following  map:

$$
L_A :  A\setminus E  \longrightarrow  e^*,
     x   \longmapsto 
\left\{
     \begin{matrix}
   a^{-1}x \mbox{ if } x\in a_c^*    \\
    b^{-1}x \mbox{ if } x\in b_c^*    \\
            \end{matrix}
\right. .
$$

The \textit{right projection}  of the angle $A$ is defined similarly:
$$
R_A :  A\setminus E  \longrightarrow  e^*,
     x   \longmapsto 
\left\{
     \begin{matrix}
   xa^{-1} \mbox{ if } x\in a_c^*    \\
    xb^{-1} \mbox{ if } x\in b_c^*    \\
            \end{matrix}
\right. .
$$

In the following we will identify $e^*$ with $S^2$: $L_A$ and $R_A$ have now values in the sphere
$S^2$.

\begin{lemma}\label{lem: proj angle} 
\begin{enumerate}
\item The half-planes $a^*_c$ and $b^*_c$ are isometrically mapped by $L_A$ to two non-intersecting open hemispheres of $S^2$ delimited by the great circle $E^l:=a^{-1}E=b^{-1}E$. 
If $x\in E$ then the distance between $a^{-1}x$ and  $b^{-1}x$ 
is the exterior dihedral angle of $A$. Moreover, if   $L_A(a^*_c)$ is on the left of $E^l$
 and  $L_A(b^*_c)$ is on the right of $E^l$, then  $a^{-1}x$ is forward and 
 $b^{-1}x$ is backward. 
\item   The half-planes $a^*_c$ and $b^*_c$ are isometrically mapped by $R_A$ to 
two non-intersecting open hemispheres of $S^2$ delimited by the great circle  $E^r:=Ea^{-1}=Eb^{-1}$. 
If $x\in E$ then the distance  between $xa^{-1}$ and  $xb^{-1}$ 
is the exterior dihedral angle of $A$. Moreover, if   $R_A(a^*_c)$ is on the left of $E^r$
 and  $R_A(b^*_c)$ is on the right of $E^r$, then $b^{-1}x$ is forward and 
 $a^{-1}x$ is backward. 
\end{enumerate}
\end{lemma}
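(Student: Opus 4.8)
The plan is to work in the identification of $S^3$ with the unit quaternions (equivalently $SU(2)$) provided by the multiplicative structure, under which $e=1$, the pairing is $\langle x,y\rangle=\mathrm{Re}(x\bar y)$, the inverse of a unit quaternion is its conjugate, $e^*$ is exactly the set of imaginary quaternions (our copy of $S^2$), and both left and right multiplications are isometries. I would prove part (1) in detail; part (2) is identical after replacing left by right multiplication, using that right multiplication by $x$ sends $y^*$ to $(yx)^*$ in place of Lemma~\ref{lem: basic mult}(1).

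First I would record that $L_A$ maps each half-plane isometrically onto an open hemisphere. By Lemma~\ref{lem: basic mult}(1), left multiplication by $a^{-1}$ is an isometry of $S^3$ sending $a^*=\{x:\langle x,a\rangle=0\}$ onto $(a^{-1}a)^*=e^*$; restricted to the open hemisphere $a^*_c$ it is an isometry onto an open hemisphere of $S^2=e^*$, and likewise for $b^*_c$ under left multiplication by $b^{-1}$. Next I would show the two boundary circles coincide, $a^{-1}E=b^{-1}E$, by computing their poles in $e^*$. For $x\in E$ the vector $a^{-1}x=\bar a x$ is imaginary, and $\langle \bar a x,\,\mathrm{Im}(\bar a b)\rangle=\langle x,b\rangle=0$, so (up to normalization) $\mathrm{Im}(\bar a b)$, which is nonzero since $a\neq\pm b$, is a pole of $a^{-1}E$; similarly the pole of $b^{-1}E$ is $\mathrm{Im}(\bar b a)=-\mathrm{Im}(\bar a b)$. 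As the two poles are antipodal they determine the same great circle $E^l$. The same computation gives $L_A(a^*_c)=\{y\in e^*:\langle y,\mathrm{Im}(\bar a b)\rangle>0\}$ and $L_A(b^*_c)=\{y\in e^*:\langle y,\mathrm{Im}(\bar a b)\rangle<0\}$, so the two images are the two opposite open hemispheres bounded by $E^l$, and in particular they are disjoint.

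The distance statement is then a one-line quaternionic computation: for $x\in E$,
\[
\langle a^{-1}x,\,b^{-1}x\rangle=\langle \bar a x,\,\bar b x\rangle=\mathrm{Re}\big(\bar a\,x\bar x\,b\big)=\mathrm{Re}(\bar a b)=\langle a,b\rangle ,
\]
using $x\bar x=1$. Hence $d(a^{-1}x,b^{-1}x)=\arccos\langle a,b\rangle$, which is the spherical distance between the poles $a$ and $b$, that is, the length of the dual edge and so exactly the exterior dihedral angle of $A$, as recalled in the discussion of polar links and polar duality.

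Finally, for the forward/backward assertion I would observe that $b^{-1}x=(\bar b a)\,a^{-1}x$, so $b^{-1}x$ is obtained from $a^{-1}x$ by the fixed left translation by $\bar b a$, which preserves $E^l$ and displaces points along it by the constant arc length $\alpha=\arccos\langle a,b\rangle$. What remains is to match the direction of this rotation with the orientation of $E^l$: one fixes the orientation of $S^2$ and of the edge, reads off which of the two hemispheres is ``left'' and which is ``right'', and then checks — by differentiating the two parametrizations $x\mapsto a^{-1}x$ and $x\mapsto b^{-1}x$ of $E^l$ at a common point and comparing the resulting tangent vectors with the ambient orientation — that under the convention $L_A(a^*_c)$ on the left and $L_A(b^*_c)$ on the right the point $a^{-1}x$ is forward and $b^{-1}x$ is backward. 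I expect this orientation bookkeeping, rather than any of the metric identities above, to be the main obstacle.
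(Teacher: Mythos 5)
Your argument is correct and follows essentially the same route as the paper's: both rest on the multiplicative structure of $S^3$ (the paper's $SU(2)$ identification, your quaternions), on left multiplication by $a^{-1}$, $b^{-1}$ being isometries carrying $a^*$, $b^*$ to $e^*$, and on the observation that the common boundary circle is the one polar to $\mathrm{Im}(a^{-1}b)$ --- which is exactly the paper's statement that $a^{-1}x$ and $b^{-1}x$ lie in $(a^{-1}b)^*\cap e^*=(b^{-1}a)^*\cap e^*$, your sign computation $\langle a^{-1}x,\mathrm{Im}(a^{-1}b)\rangle=\langle x,b\rangle$ being a slightly cleaner packaging of the paper's injectivity-by-contradiction step via Lemma~\ref{lem: basic mult}(2). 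The one step you flag as the ``main obstacle,'' the forward/backward orientation check, is deferred in both write-ups to the same kind of verification: the paper reduces it to the sign of $\det(b^{-1}x,a^{-1}x,a^{-1}b,e)$, argues the sign is constant, and checks it on an example, which is precisely the tangent-vector comparison you describe.
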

\begin{proof}
 We prove the first part of the lemma, the second is proved using 
similar arguments. We first check that $L_A$ is an injective map. It suffices
to show that $L_A(a_c^*)\cap L_A(b^*_c)=\varnothing$. Suppose that the intersection is non-empty. 
Hence there exists $x_1\in a^*_c$ and $x_2\in b^*_c$ such that $a^{-1}x_1=b^{-1}x_2$.
From $\langle x_1,b\rangle >0$ it comes $\langle b^{-1}x_2,a^{-1}b \rangle >0$, but $b^{-1}x_2\in e^*$, so
by Lemma~\ref{lem: basic mult} we obtain $\langle x_2,a\rangle <0$, that is a contradiction.

 Let $x\in E$. Lemma~\ref{lem: basic mult} implies that  $a^{-1}x \in (a^{-1}b)^*\cap e^*$, $b^{-1}x \in (b^{-1}a)^*\cap e^*$,
 and that these two spaces are equal.
  By definition, the cosine of the exterior dihedral angle $\alpha < \pi$ of $A$ is $\cos \alpha = \langle a,b\rangle= \langle a^{-1},b^{-1}\rangle=\langle a^{-1}x,b^{-1}x \rangle=\cos d_{S^2}(a^{-1}x,b^{-1}x)$. 

It is not hard 
to see that $a^{-1}b$ is a vector orthogonal  to $E^l$  on the same side as $L_A(a^*_c)$.
It remains to prove that, for $x\in E$, $\mathrm{det}(b^{-1}x,a^{-1}x,a^{-1}b,e)>0$. This determinant never vanishes. This comes
from the facts that   $a^{-1}b$ and $e$ are orthogonal to
$E^l\subset e^*$,  $a^{-1}x$ and $b^{-1}x$ span $E^l$, and $a^{-1}b$ is not collinear to $e$.
Hence the sign of the determinant is constant, and it can be checked on examples.
\end{proof}

\subsubsection{Convex polyhedra as flippable tilings}

Let $P$ be a convex polyhedron and $E$ an edge of $P$. 
The planes containing the faces  of $P$ adjacent to $E$ make an angle $A(E)$. 
The \emph{left projection}  of $P$ is,
for each edge $E$ of $P$, the left projection of the angle $A(E)$  restricted 
to the faces of $P$ adjacent to $E$. The right projection  of $P$ is defined in a similar way.

The following two propositions are the keystone of your study of flippable tilings in the
spherical case, since they relate flippable tilings to convex polyhedra.

\begin{prop}\label{prop: proj sph}
The left (resp. right) projection of a convex polyhedron $P$ is the set of the white faces of a right 
(resp. left) flippable tiling of the sphere. The black faces are isometric to the polar links of the vertices of $P$.
The 1-skeleton of $P$ is isotopic to
the incident graph of the tiling.
\end{prop}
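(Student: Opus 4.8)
The plan is to exploit a simplification hidden in the definition: on a single face, the left projection is one fixed isometry. If $f$ is a face of $P$ lying in the totally geodesic plane $a^*$ (so that $a\in S^3_+$ is the dual point, i.e.\ the outward normal of $f$), then for every $x\in f$ one has $L_A(x)=a^{-1}x$, independently of which edge $E\subset\partial f$ is used to form $A$. By Lemma~\ref{lem: basic mult}(1) left multiplication by $a^{-1}$ carries $a^*=(a)^*$ to $(a^{-1}a)^*=e^*$, so the left projection maps $f$ isometrically onto a convex spherical polygon $a^{-1}f\subset e^*\cong S^2$. I declare these images to be the white faces; by construction $f\mapsto a^{-1}f$ is a bijection onto them and each $a^{-1}f$ is isometric to $f$. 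First I would record this reduction, then prove successively that the white faces fit together correctly along edges, that their complement decomposes into black faces isometric to the polar links, and that the whole configuration is a right flippable tiling.

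For the edge picture I would feed each edge directly into Lemma~\ref{lem: proj angle}(1). Let $E$ be an edge of $P$ with adjacent faces $f\subset a^*$ and $f'\subset b^*$. The lemma gives that $a^{-1}f$ and $b^{-1}f'$ land in two disjoint open hemispheres separated by a common great circle $E^l=a^{-1}E=b^{-1}E$, and that the two boundary arcs $a^{-1}E$ and $b^{-1}E$ lie on $E^l$ offset by the exterior dihedral angle $\delta$ of $A(E)$. Consequently, along $E^l$ the white face $a^{-1}f$ lies on one side and $b^{-1}f'$ on the other, shifted by $\delta$, so that on each side the arc splits as a white piece (of length $\mathrm{length}(E)$) together with a black piece (of length $\delta$), with the black piece forward on one side and backward on the other. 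This is exactly the edge condition of a flippable tiling: the two white intersections with $E^l$ have equal length, the two black ones have equal length $\delta$, and the forward/backward arrangement matching the \emph{right}-handed convention is precisely the determinant sign computed at the end of Lemma~\ref{lem: proj angle}(1).

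Next I would identify the black regions with polar links. Fix a vertex $v$ of $P$, with faces $f_1,\dots,f_k$ (duals $a_1,\dots,a_k$) around it in cyclic order and edges $E_1,\dots,E_k$, with $E_i$ between $f_i$ and $f_{i+1}$. The black sliver along $E_i^l$ is the gap from $a_i^{-1}v$ to $a_{i+1}^{-1}v$, of length $\delta_i$; consecutive slivers meet at the points $a_i^{-1}v$, so they close up into a convex spherical $k$-gon whose vertices are the $a_i^{-1}v$ and whose side lengths are the exterior dihedral angles. The remaining point is to check that the interior angle of this polygon at $a_i^{-1}v$ equals $\pi$ minus the interior angle of $f_i$ at $v$; this is a direct angle computation using that $x\mapsto a_i^{-1}x$ is an isometry and that the two black slivers emanate from $a_i^{-1}v$ along the circles $E_{i-1}^l,E_i^l$ opposite to the white edges. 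By the definition of the polar link recalled in \S\ref{sub: def pol sph}, the polygon is then isometric to $\mathrm{Lk}(v)$; I color it black. This yields bijections between faces of $P$ and white faces, vertices of $P$ and black faces, and edges of $P$ and edges of the tiling, with all adjacencies preserved, so the incidence graph coincides combinatorially with the $1$-skeleton of $P$.

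The main obstacle is the global step: promoting these local edge- and vertex-pictures to a genuine tiling of all of $S^2$, with pairwise disjoint interiors and no uncovered region. Abstractly gluing the white faces (the faces of $\partial P$) to the black faces (the links) along the prescribed identifications produces a closed surface $\Sigma$, homeomorphic to a sphere since its cell structure is that derived from $\partial P\cong S^2$, together with a map $\Sigma\to S^2$ that is a local isometry, hence a covering. To pin the degree to $1$ I would use the area identity $\sum_f \mathrm{area}(f)+\sum_v \mathrm{area}(\mathrm{Lk}(v))=4\pi$: a spherical $k$-gon with angles $\pi-\theta_i$ has area $2\pi-\sum_i\theta_i$, which is exactly the angular defect of $\partial P$ at $v$, so the identity follows from Gauss--Bonnet applied to the spherical polyhedral sphere $\partial P$. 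A degree-one covering $\Sigma\to S^2$ is a homeomorphism, so the black and white faces tile $S^2$ exactly once, completing the verification that we obtain a right flippable tiling. Finally the isotopy between the $1$-skeleton of $P$ and the incidence graph is realized in the projective model introduced above, where $P$ is an ordinary convex polytope; and the parenthetical statement (right projection yielding a left flippable tiling) follows by the symmetric argument using Lemma~\ref{lem: proj angle}(2).
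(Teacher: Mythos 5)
Your proposal is correct and follows essentially the same route as the paper: Lemma~\ref{lem: proj angle} for the local edge picture, identification of the black $k$-gons around each vertex with the polar links (edge lengths equal to exterior dihedral angles, interior angles supplementary to the face angles), and the Gauss--Bonnet area identity for the global step. The only real difference is in packaging: where the paper asserts that the interiors are disjoint by construction and deduces coverage of $S^2$ from the area count, you organize the same facts as a degree-one covering $\Sigma\to S^2$, a slightly more careful presentation of the same argument.
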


\begin{proof}
Let us consider the case of the left projection. The case of the right projection is proved in a similar way.
 The  left projection of the faces of $P$ adjacent to an edge $E$
gives two convex polygons $w_1$ and $w_2$ of $S^2$, that we paint in
white. From Lemma~\ref{lem: proj angle}, $w_1$ and $w_2$ have an edge  on a common geodesic. 
Let us consider the part $e$ of this geodesic which contains 
the edge of each  polygon and which is such that one of them is forward on the left.

Let $s$ be a vertex of $P$ and $E_1,\cdots,E_n$ be  (oriented) edges of $P$ having $s$ as an endpoint.
As an element of the edge $E_i$, the left projection sends $s$ to two points $\mathrm{ext}_{i}$ and $\mathrm{int}_{i}$
 of the segment
$e_i$ of $S^2$ ($e_i$ is defined as above).  
 $\mathrm{ext}_{i}$ is an endpoint of $e_i$ and $\mathrm{int}_{i}$ belongs to the interior of $e_i$. 
Similarly, if $E_j$ is another edge from $s$ adjacent to a face having $E_i$ has an edge, $s$ is sent to two points
$\mathrm{ext}_{j}$ and $\mathrm{int}_{j}$ of $e_j$, 
 and $\mathrm{ext}_{j}=\mathrm{int}_{i}$ or $\mathrm{int}_{j}=\mathrm{ext}_{i}$.
Finally  $s$ is sent by the left projection to $n$ distinct points of 
$S^2$. The segments between $\mathrm{ext}_{i}$ and $\mathrm{int}_{i}$ for any $i$ define a polygon $p$. 
Lemma~\ref{lem: proj angle} says that the lengths of the edges 
of $p$ are equal to the dihedral angles of $P$.  
Moreover, by construction, any  interior angle of $p$ is supplementary 
to the interior angle of a white face, which is the angle of a face of $P$ at $s$. By definition of the polar link, $p$ is isometric to the
polar link  of $P$ at $s$. In particular it is a convex polygon. We paint it in black.

If we do the same for all the vertices of $P$, we get a set of white faces and a set of black faces on $S^2$.
By construction all faces have disjoint interior. 
 The Gauss--Bonnet formula says exactly that the area of $P$ plus the area of the polar duals of the vertices of $P$ is
$4\pi$, the area of $S^2$. Hence our white and black faces cover the whole sphere. The definition of right 
flippable tiling is satisfied, and it clearly satisfies the properties stated in the proposition.
\end{proof}

We denote by $T_l(P)$ (resp. $T_r(P)$) the left (resp. right) flippable tiling obtained from $P$. Notice that if $Q$ is 
a convex polyhedron isometric to $P$, then $T_l(Q)$ is isometric to $T_l(P)$. In the following, we may write such equivalence as: if $P=Q$ then
$T_l(P)=T_l(Q)$.
 It comes from the definitions that 
$$T_l(P^*)=T_r(P)^*; \quad T_r(P^*)=T_l(P)^*.$$

\begin{remark}
Proposition~\ref{prop: proj sph}  remains true for 
hosohedra which are not digons. It is actually also true for dihedra and digons --- up to adapting
 Lemma~\ref{lem: proj angle} in the case where $a=-b$. The flippable tilings obtained are the ones described in Example~\ref{ex: til deg pol}.
\end{remark}

\subsubsection{Flippable tilings as convex polyhedra}

Let $T_l$ be a left flippable tiling of $S^2$.
It is always possible to  isometrically embed in $S^3$ the white faces of $T_l$ adjacent to 
a black face $b$, such that the resulting gluing is a convex cone with boundary,
with polar link at the apex isometric to $b$, see Figure~\ref{fig:cone}. Moreover, up to
congruence, the resulting cone is unique.

\begin{figure}[ht]
\input 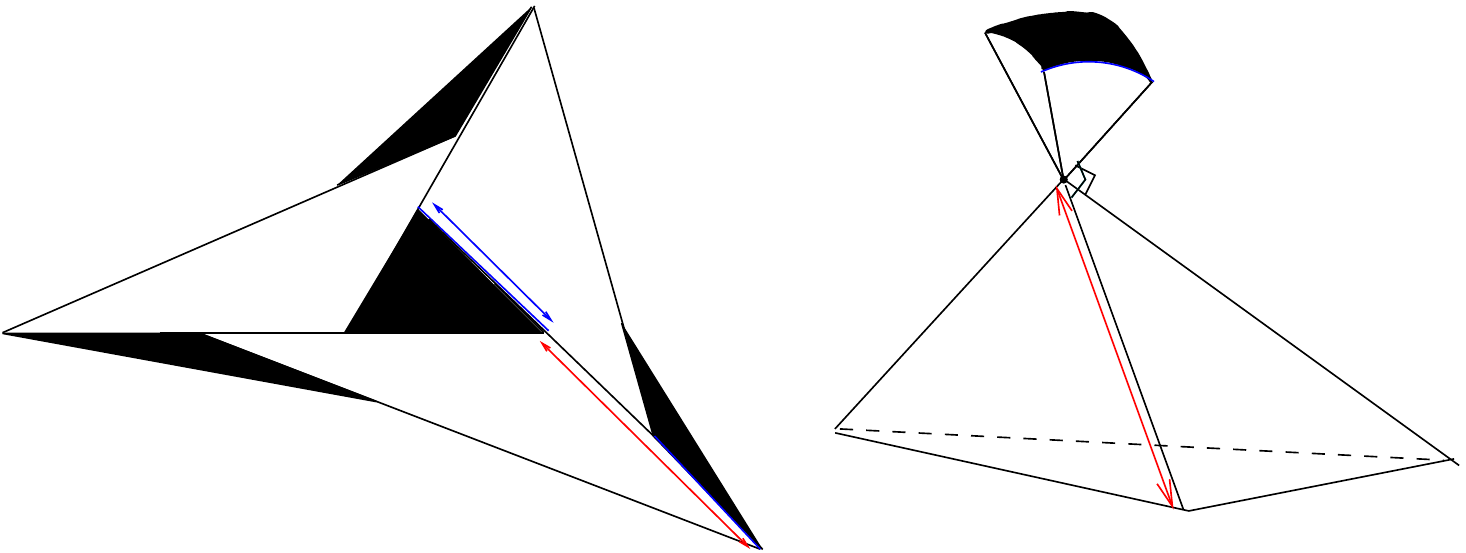_t
\caption{White faces around a black face define a convex cone.}\label{fig:cone}
\end{figure}

We repeat this procedure for the  black faces connected to $b$, and so on. 
 By the principle of global continuation of functions on simply connected manifolds 
(see for example \cite[Lemma 1.4.1]{Jos06}), this provides a well defined map from $T_l$ to $S^3$.
This map is clearly continuous,  and, as it is clear from the definition that
the incidence graph of a flippable tiling 
is connected, we obtain a closed (i.e.~compact without boundary) polyhedral surface of $S^3$,
 called the \emph{white polyhedron} of $T_l$ and denoted by $P_w(T_l)$.

\begin{prop}\label{pr:white}
The white  polyhedron $P_w(T_l)$ of a left flippable tiling $T_l$ of the sphere $S^2$ is a convex polyhedron
(or a dihedron or a hosohedron) of $S^3$, 
unique up to a global isometry. The 1-skeleton of $P_w(T_l)$ is isotopic to
the incident graph of $T_l$. Moreover a white face of $T_l$ is isometric to its corresponding face 
of $P_w(T_l)$ and a black face of $T_l$ is isometric to the polar link of the corresponding vertex of 
$P_w(T_l)$.
\end{prop}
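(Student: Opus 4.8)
The surface $P_w(T_l)$ has already been produced as a closed polyhedral surface in $S^3$ by the developing (global continuation) argument preceding the statement, so the plan is to show that this surface is locally convex, deduce from this that it is globally convex, and finally read off the combinatorial and metric identifications. First I would record local convexity. By the cone construction recalled just above, the white faces adjacent to a fixed black face $b$ glue to a convex cone whose apex has polar link isometric to $b$; thus $P_w(T_l)$ is locally convex at each apex. Since the dihedral angle along any edge issuing from the apex of a convex cone is at most $\pi$, and every edge of $P_w(T_l)$ issues from such an apex, every edge has convex dihedral angle; together with the totally geodesic faces this makes $P_w(T_l)$ locally convex at each of its points.

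The heart of the proof --- and the step I expect to be the main obstacle --- is passing from local to global convexity, the surface being a priori only immersed. Here I would work in the central-projection (projective) model of $S^3_+$ described in Subsection~\ref{sub2.1}, in which totally geodesic surfaces become affine planes and local convexity is preserved; in this affine chart a complete locally convex immersed polyhedral surface is embedded and bounds a convex body by a van Heijenoort-type local-to-global convexity theorem, and pulling back shows that $P_w(T_l)$ is the boundary of a convex polyhedron of $S^3$ (a dihedron or a hosohedron in the degenerate cases of Example~\ref{ex: til deg pol}). The delicate point is that the affine chart covers only an open hemisphere, so I must first argue that the developed surface stays inside one; this I would obtain from a continuity and connectedness argument, using that each elementary cone lies in a hemisphere and that a locally convex surface cannot cross the great sphere bounding the hemisphere selected by a support plane at an extreme point, so that the image never escapes $S^3_+$.

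It then remains to identify the combinatorics and metrics and to settle uniqueness. By construction each white face of $T_l$ is embedded isometrically as a face of $P_w(T_l)$, which gives the asserted isometry of white faces with faces. The vertices of $P_w(T_l)$ are exactly the apexes, one per black face, and the cone construction makes the polar link at each apex isometric to the corresponding black face, giving the polar-link statement. Because the vertices, faces and edges of $P_w(T_l)$ correspond respectively to the black faces, white faces and edges of $T_l$, with matching incidences, the $1$-skeleton of $P_w(T_l)$ is isotopic to the incidence graph of $T_l$. Finally, uniqueness up to a global isometry of $S^3$ follows from the fact that each elementary cone is unique up to congruence: once the first cone is placed the developing map is entirely determined, and two placements of the first cone differ by an isometry of $S^3$, hence so do the resulting polyhedra.
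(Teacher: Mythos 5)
Your overall strategy is the same as the paper's: local convexity from the convex cones at the apexes, confinement of the developed surface to a hemisphere, and then a van Heijenoort local-to-global convexity theorem in the affine chart of that hemisphere, with the combinatorial and metric identifications and the uniqueness coming for free from the construction. But the step you yourself flag as delicate --- that the compact, a priori only immersed, locally convex surface stays inside one open hemisphere --- is asserted rather than proved, and this is precisely where the paper invests essentially all of its effort. Your formulation is moreover somewhat circular: you speak of ``the hemisphere selected by a support plane at an extreme point,'' but extreme points and convex hulls in $S^3$ only make sense once the set is already known to lie in a hemisphere. The paper avoids this by taking $H$ to be a support hemisphere of the \emph{local} cone $i(V)$ at a single apex $i(p)$ (a well-defined local object), and then proving that $P\setminus\{i(p)\}\subset H$ by adapting the do Carmo--Warner argument: one passes to the connected component $N$ of $i^{-1}(H)$ containing $V\setminus\{p\}$, shows that its image in the affine chart is locally convex, complete and unbounded, invokes van Heijenoort \cite{VH52} to conclude that it is a topological plane, and then uses a separation argument (the circle $\partial V$ bounds, and all of $N\setminus V$ maps into the bounded component) to show that $i(p)$ is the \emph{only} limit point of $i(N)$ on $\partial H$; connectedness of the white metric $M$ then forces the whole surface minus $i(p)$ into $H$. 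The bare claim that ``a locally convex surface cannot cross the great sphere'' is exactly the conclusion of this argument, not a fact one can quote, since a compact locally convex immersed surface could a priori wrap around and re-approach $\partial H$ far from $i(p)$.

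A secondary omission: the degenerate cases are not just a parenthetical. The paper first checks whether two cone points of the white metric at distance $\pi$ occur in $\overline{V}$ (the distance between cone singularities of the white metric is always $\leq\pi$ because the tiling lives on the round sphere); if so, the faces are digons and the surface is a hosohedron, and this case must be split off \emph{before} the hemisphere argument, because then $i(\overline V)$ is not contained in the open hemisphere $H$ and the do Carmo--Warner step does not apply. Your treatment of local convexity, of the combinatorial and metric identifications, and of uniqueness via the rigidity of the developing map is correct and matches the paper.
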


It will be  clear from the proof that if $T_l$ has exactly two black (resp. white) faces then  $P_w(T_l)$ is a hosohedron
(resp. a dihedron).

\begin{proof}

By construction, $P:=P_w(T_l)$  satisfies the properties stated in the proposition.
It remains to check that this polyhedral surface is convex.

Let us denote by $M$ the white metric of $T_l$ (see~\ref{subsub:white}). Then
$P$ is an isometric polyhedral immersion of $M$ into $S^3$. Let us denote by $i$
this immersion. Let $p$ be a cone singularity of
$M$ and let $V$ be the set of white faces on $M$ having $p$ as vertex. By definition
$i(V)$ is a convex cone in $S^3$. 
Note that as the metric on $T_l$ is the round metric on $S^2$, the distance
between two conical singularities of $M$ is $\leq \pi$. 

Let $H$ be an open hemisphere
which contains $i(V)$ and such that $i(p)$ belongs to the boundary $\partial H$
(i.e.~$\partial H$ is a support plane of $i(V)$). 
Suppose that the distance between $p$ and another conical singularity $p'$
belonging to the closure $\overline{V}$ of $V$ is $\pi$. As the faces of the tiling are supposed to 
be convex spherical polygons or digons, this means that $p$ and $p'$ are two vertices of a digon.
Each edge of the digon is the edge of a white face, and by definition of the tiling the 
white faces next to this one have to be digons. Finally the cone with apex $i(p)$ is a hosohedron
and this is actually the whole $P$.
 
Now we suppose that  $i(\overline{V})$ is contained in $H$. 
We will prove that in this case $P\setminus \{i(p)\}$ is contained in $H$. 
Suppose it is true. Then there exists an affine chart of $S^3$ in which 
$P$ is  a locally convex compact polyhedral surface, and it follows that $P$ is convex 
 \cite{VH52}. Hence  $P$ is a dihedron  or a convex polyhedron, depending if it has
empty interior or not.
Let us   prove that $P\setminus \{i(p)\}$ is contained in $H$.
This
is a straightforward adaptation to the polyhedral case of the argument of \cite{dCW70}, which  concerns
differentiable surfaces. For convenience we repeat it. 

We denote by $\pi$ the map from $H$ to the affine chart of $S^3$ defined by $H$
and by $N$ the connected component of $i^{-1}(H)$ containing $V\setminus \{p\}$.
 $\pi(i(N))$ is a locally convex subset of the Euclidean space, and as $M$ is compact,   $\pi(i(N))$  is
 complete for the metric induced by the Euclidean metric \cite[2.5]{dCW70}. Moreover 
$\pi(i(N))$ contains the image of a neighborhood of a conical singularity $q$ of $M$ different from $p$
(take a conical singularity contained in the boundary of $V$). In the Euclidean space, the
image of $q$ is the apex of a truncated convex cone contained in the image of $N$.  
Finally  $\pi(i(N))$ is unbounded ($\pi$ sends $i(p)$ to infinity). By \cite{VH52}, 
$\pi(i(N))$ is homeomorphic to a plane, then
$N$ is homeomorphic to a plane.
It follows that, as $\partial V$ is homeomorphic to a circle, its image for
$\pi\circ i$ separates $\pi(i(N))$ into two connected components. Let us denote by $W$ the bounded one.
It is easy to see that if $m\in N\setminus V$, then $\pi(i(m))\in W$ (suppose the converse and look at
the image of a segment
between $m$ and $p$). This implies that no point of $\partial H$, except $i(p)$, is a limit point of
$i(N)$. As $M$ is connected it follows that  $P\setminus \{i(p)\}$ is contained in $H$.
\end{proof}

The two constructions above (from a polyhedron to a flippable tiling and from a  flippable tiling to a polyhedron) 
are of course inverse one from the other, for example for left flippable tilings:
\begin{equation}\label{eq:white pol}
T_l(P_w(T_l))=T_l, \quad P_w(T_l(P))=P.
\end{equation}

The \emph{black polyhedron} $P_b(T_l)$ of a left flippable tiling $T_l$ is constructed similarly, 
but the faces of $P_b(T_l)$ are isometric to the black faces of $T_l$, and the polar links at the vertices of  $P_b(T_l)$
are isometric to the white faces of $T_l$. The definitions of white and black polyhedron
for a right flippable tiling are straightforward. We have for example:

$$ P_b(T_l)=P_w(T_l)^*=P_w(T_l^*). $$

\subsection{Proof of Theorem~S\ref{tmS:base}}

Let $T_r$ be a right flippable tiling on the sphere. 
The  left flippable tiling
$$T_r':= T_l(P_w(T_r)) $$
is obtained by flipping $T_r$. Similarly, if $T_l$ is a left flippable tiling on the sphere, 
the  right flippable tiling
$$T_l':= T_r(P_w(T_l)) $$
is obtained by flipping $T_l$. Moreover $(T_r')'=T_r$ due to (\ref{eq:white pol}).

\subsection{Proof of Theorem~S\ref{tm:moduli-s}}

\subsubsection{Flippable tilings vs convex polyhedra}

Proposition~\ref{prop: proj sph} and Proposition~\ref{pr:white} determine a 
one-to-one map between flippable tilings on the sphere $S^2$ and convex polyhedra
in $S^3$. Moreover the incidence graph of a flippable tiling is equal to the
1-skeleton of the corresponding convex polyhedron. 

There is a strong relation between on the one hand the topology on the space
of flippable tilings on the sphere (as defined right before Theorem S\ref{tm:moduli-s})
and on the other hand the natural topology on the space of convex polyhedra in 
$S^3$ (the Hausdorff topology on the interior of the polyhedra, considered 
up to global isometry). 

\begin{lemma} \label{lm:topologies}
Let $(T_{l,n})_{n\in \N}$ be a sequence of left flippable tilings on $S^2$, with the
same number of black faces. This 
sequence converges to a left flippable tiling $T_l$ if and only if the sequence
of the white polyhedra $P_w(T_{l,n})$ converges to a convex polyhedron $P$. $P$ is then the
white polyhedron of the limit $T_l$.
\end{lemma}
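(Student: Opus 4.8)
The plan is to establish the equivalence of the two convergence notions by exploiting the reconstruction map $T_l \mapsto P_w(T_l)$ and its inverse $P \mapsto T_l(P)$, which by equation~(\ref{eq:white pol}) are mutually inverse, and to show that each is continuous for the respective topologies. The key observation is that convergence in both settings is governed by the same underlying data: the white metric $M_n$ of $T_{l,n}$ (obtained by gluing the white faces) is exactly the induced metric on the immersed surface $P_w(T_{l,n})$, via the isometric immersion $i_n$ furnished in the proof of Proposition~\ref{pr:white}. So the entire argument reduces to tracking this white metric through the correspondence.

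First I would prove the ``only if'' direction. Suppose $T_{l,n} \to T_l$ in the topology described before Theorem~S\ref{tm:moduli-s}, meaning that after applying isometries of $S^2$ the unions of white (resp.~black) faces converge in the Hausdorff topology. Since the black faces are isometric to the polar links of the vertices of $P_w(T_{l,n})$ and the white faces are isometric to the faces themselves, Hausdorff convergence of the faces forces convergence of the face shapes, the edge lengths, and hence the dihedral angles of the polyhedra $P_w(T_{l,n})$. Because all these polyhedra share the same number of black faces (hence the same number of vertices) and live in an open hemisphere $S^3_+$ of compact closure, the sequence $P_w(T_{l,n})$ is precompact in the Hausdorff topology modulo isometry; any Hausdorff limit $P$ is again a convex polyhedron (convexity is closed under Hausdorff limits), and by uniqueness of the limiting face data, $P$ must be $P_w(T_l)$. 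Here I would use Proposition~\ref{pr:white} to identify the limit.

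For the ``if'' direction I would run the argument in reverse, now invoking continuity of the left projection construction of Proposition~\ref{prop: proj sph}. If $P_w(T_{l,n}) \to P$ in the Hausdorff topology, then the faces and vertices of the polyhedra converge, so the dihedral angles and the polar links converge; since the white faces of $T_l(P_w(T_{l,n}))$ are the left projections of the faces of $P_w(T_{l,n})$ and the black faces are isometric to the polar links of its vertices, the left projections vary continuously with $P$. Hence the unions of white and black faces of $T_{l,n} = T_l(P_w(T_{l,n}))$ converge in the Hausdorff topology to those of $T_l(P)$, which by~(\ref{eq:white pol}) is precisely the left flippable tiling whose white polyhedron is $P$. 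This gives $T_{l,n} \to T_l$ with $P = P_w(T_l)$.

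The main obstacle I anticipate is the possible degeneration under the limit: when two white (or black) faces merge, as described in the passage before Theorem~S\ref{tm:moduli-s}, the incidence graph changes and some edge lengths or dihedral angles of $P_w(T_{l,n})$ tend to $0$, so the limiting polyhedron $P$ may have fewer edges (an edge collapses) or fewer faces than the approximating ones. I would handle this by checking that Hausdorff convergence of the polyhedra is exactly compatible with these collapses: collapsing an edge of $P_w(T_{l,n})$ corresponds to two vertices coalescing, hence to the merging of two black faces, while a vanishing dihedral angle corresponds to two faces becoming coplanar, hence to the merging of two white faces. The delicate point is to verify that the limit $P$ is still a genuine convex polyhedron (possibly a dihedron or hosohedron) and that the correspondence of Propositions~\ref{prop: proj sph} and~\ref{pr:white} extends continuously across these boundary strata, so that the two topologies match even through degenerations.
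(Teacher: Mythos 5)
Your proposal is correct and follows essentially the same route as the paper: both directions pass through the bijection of Propositions~\ref{prop: proj sph} and~\ref{pr:white}, tracking on one side the white faces (the faces of $P_w(T_{l,n})$) and on the other the black faces (the faces of the dual polyhedron $P_w(T_{l,n})^*$). The one ingredient you flag as a ``delicate point'' but leave unverified --- that the correspondence survives degenerations such as edges collapsing or faces becoming coplanar --- is exactly what the paper supplies by invoking the continuity of the polar map (so that $P_w(T_{l,n})\to P$ forces $P_w(T_{l,n})^*\to P^*$, with a reference to Rivin--Hodgson), after which both of your directions close as you describe.
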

In the statement above and in the proof below, $P$ is a spherical convex polyhedron in a wide sense: it can also be
a dihedron or a hosohedron. 
\begin{proof}
Clearly we can suppose that all the polyhedra $P_w(T_{l,n})$ have the same 
combinatorics. However the combinatorics of the limit $P$ might be different.

Suppose that $(P_w(T_{l,n}))$ converges to a limit $P$. For all $n$, the white faces of 
$T_{l,n}$ correspond to the faces of $P_w(T_{l,n})$, and the set of faces of $P_w(T_{l,n})$
converges to the set of faces  
 of $P$. The black faces of $T_{l,n}$ correspond 
to  the polar links of the vertices of $P_w(T_{l,n})$, which are the faces of the dual
polyhedron $P_w(T_{l,n})^*$. However the polar map is continuous \cite[2.8]{RH93} and thus 
$P_w(T_{l,n})^*\rightarrow P^*$, so that the faces of the polyhedra $P_w(T_{l,n})^*$ 
converge to the faces of $P^*$. It follows that the flippable tilings $T_{l,n}$
converge, in the topology defined before Theorem S\ref{tm:moduli-s}, to a
left flippable tiling $T_l$, with $P=P_w(T_l)$. 

The same argument can be used conversely, for a sequence of flippable tilings 
$(T_{l,n})$ converging to a limit $T$. Then both the faces and the dual faces of
the corresponding polyhedra converge, so that the sequence of those polyhedra has
to converge. 
\end{proof}

The same lemma, along with its proof, holds also for flippable tilings on hyperbolic
surfaces, in relation to equivariant polyhedral surfaces in $AdS_3$.

\subsubsection{The proof}

To prove that $\cT^{r,1}_n(\Gamma)$
is non-empty it is sufficient to show the existence of a convex polyhedron
$P\subset S^3$ with 1-skeleton equal to $\Gamma$. However according to a well-known theorem of
Steinitz, any 3-connected planar graph is the 1-skeleton of a convex polyhedron
$Q\subset \R^3$. One can then consider the image of $Q$ in a projective model of 
a hemisphere $H\subset S^3$, which is a convex polyhedron in $H$ with the same
combinatorics as $Q$. This proves that $\cT^{r,1}_n(\Gamma)$ is non-empty as soon
as $\Gamma$ is 3-connected.

Under this assumption, and according to Lemma \ref{lm:topologies},
the topology of $\cT^{r,1}_n(\Gamma)$ is the same as the topology of 
the space of convex polyhedra in $S^3$ with 1-skeleton equal to $\Gamma$, considered
up to global isometries. But the
same argument based on projective models of the hemisphere shows that it is the same
as the topology of the space of convex polyhedra in $\R^3$ with 1-skeleton equal to 
$\Gamma$, because any convex polyhedron in $S^3$ is contained in a hemisphere
and is therefore the image in the projective model of this hemisphere of a 
convex Euclidean polyhedron. Finally it is known that the space of 
convex Euclidean polyhedra of given combinatorics is contractible, this is
a refinement of the Steinitz theorem, see \cite[Section 13.3, p. 144]{richter-gebert}.
So $\cT^{r,1}_n(\Gamma)$ is homeomorphic to a ball. 

Still by Lemma \ref{lm:topologies}, $\cT^{r,1}_n$ is homeomorphic to the space
of convex polyhedra with $n$ vertices in $S^3$, considered up to global 
isometry. This space is itself homeomorphic to a manifold of dimension 
$3n-6$ --- the space of $n$-tuples of points in convex position in the sphere --- 
and point (2) follows.

\subsection{Proof of Theorem~S\ref{tm:sphere}}\label{sub: proofS4}

Theorem~S\ref{tm:sphere} is a direct consequence of the  following famous theorem.

\begin{thm}[{Alexandov Existence and Uniqueness Theorem, \cite{Ale05}}]\label{thm: alexandrov}
 Let $g$ be a spherical metric with $n>2$ conical singularities of positive curvature on the sphere $S^2$. Then there exists a convex polyhedron $P_g$ such that the induced metric on the boundary of $ P_g$ is isometric to $g$. 
Moreover $P_g$ is unique up to global isometries.
\end{thm}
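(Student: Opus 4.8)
The plan is to follow Alexandrov's classical method of continuity (invariance of domain). Let $\mathcal{P}_n$ denote the space of convex spherical polyhedra in $S^3$ with $n$ vertices, considered up to global isometry; by Theorem~S\ref{tm:moduli-s} this is a manifold of dimension $3n-6$. Let $\mathcal{M}_n$ denote the space of spherical cone metrics on $S^2$ with $n$ cone singularities of positive curvature (cone angles in $(0,2\pi)$), considered up to isometry. Gauss--Bonnet forces the total singular curvature to equal $4\pi$, and triangulating by geodesic triangles shows that a metric in $\mathcal{M}_n$ is locally parameterized by the edge lengths of a triangulation with $n$ vertices; since such a triangulation has exactly $3n-6$ edges, $\mathcal{M}_n$ is also a manifold of dimension $3n-6$. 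The induced-metric map $\Phi\colon \mathcal{P}_n\to\mathcal{M}_n$, sending a polyhedron to the spherical cone metric on its boundary, is visibly continuous, and the theorem is exactly the statement that $\Phi$ is a homeomorphism.

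First I would establish that $\Phi$ is a local homeomorphism. Because domain and target are manifolds of the same dimension, by the inverse function theorem it is enough to prove that the differential $d\Phi$ is everywhere injective, i.e. that convex spherical polyhedra are infinitesimally rigid: any first-order deformation of $P$ that fixes the induced metric to first order is trivial (induced by an ambient infinitesimal isometry). This is the spherical analogue of Cauchy's infinitesimal rigidity theorem, which I would prove by the classical sign-change (arm-lemma) argument applied to the polar link at each vertex, combined with a connectedness count on the $1$-skeleton in the manner of Cauchy. Passing to the projective model of the hemisphere $S^3_+$ described above, in which convexity and the combinatorics are preserved, lets one import the Euclidean Cauchy machinery directly.

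Next I would prove that $\Phi$ is injective, which is the uniqueness assertion: two convex polyhedra in $S^3$ with isometric boundaries are congruent. The standard route is to deduce this global rigidity from the infinitesimal rigidity above, by a connectedness argument along a straight-line interpolation between the two boundary metrics, or to run the global Cauchy argument directly in the projective model. Together with the local homeomorphism property, injectivity shows that $\Phi$ is a homeomorphism onto its image.

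Finally I would show surjectivity by proving that $\Phi$ is proper and that $\mathcal{M}_n$ is connected. Properness is the main geometric obstacle: if $\Phi(P_k)=g_k\to g$ in $\mathcal{M}_n$, one must prevent the convex polyhedra $P_k$ from degenerating (vertices colliding, a face or the whole polyhedron collapsing, or the polyhedron escaping its hemisphere). The limiting metric $g$ supplies the needed a priori bounds: its positive cone angles keep the vertex angles away from both $2\pi$ and $0$, its injectivity radius bounds edge lengths and face areas from below, and every convex spherical polyhedron lies in a hemisphere and so has bounded diameter. These estimates force a subsequence of the $P_k$ to converge, up to isometry, to a nondegenerate convex polyhedron $P$ with $\Phi(P)=g$, so $\Phi$ is proper and its image is closed; it is open by the local homeomorphism property. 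Once $\mathcal{M}_n$ is shown connected --- for which one continuously deforms the edge lengths of any triangulated metric to those of a standard symmetric model, preserving the spherical triangle inequalities and the positivity of the cone angles --- the open-and-closed image must be all of $\mathcal{M}_n$, giving existence, while injectivity gives uniqueness. I expect the compactness estimates in the properness step, together with the verification that the limit is a genuine (non-collapsed) convex polyhedron, to be the most delicate part; the rigidity statements, while substantial, follow the well-understood Cauchy line of argument. See \cite{Ale05} for the original treatment.
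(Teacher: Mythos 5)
The paper does not prove this statement: Theorem~\ref{thm: alexandrov} is quoted verbatim from Alexandrov's book \cite{Ale05} and used as a black box in Subsection~\ref{sub: proofS4}. So your sketch must be measured against Alexandrov's classical argument, which it does follow in outline --- the continuity method on two $(3n-6)$-dimensional moduli spaces, rigidity for local injectivity, properness plus connectedness of the target for surjectivity; this is the same scheme the paper itself deploys (Lemma~\ref{lem: cont meth}) for its AdS analogue, Theorem~\ref{thm: presc curvADS}. Your dimension counts are correct, and deriving infinitesimal rigidity of convex polyhedra in $S^3_+$ from the Euclidean case via the projective model is legitimate, since infinitesimal rigidity is a projective invariant (Darboux--Sauer/Pogorelov).

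There is, however, a genuine gap in your properness step, precisely where you assert that the a priori bounds force a subsequence of the $P_k$ to converge ``to a nondegenerate convex polyhedron.'' They do not: for every $n>2$ the isometric double of a convex spherical polygon with $n$ vertices and all interior angles less than $\pi$ is an honest element of $\mathcal{M}_n$ (cone angles in $(0,2\pi)$, total area positive), and it is realized only by a dihedron, never by a polyhedron with nonempty interior. A sequence $P_k$ with $\Phi(P_k)\to g$ for such a $g$ must collapse onto a dihedron, so with dihedra excluded $\Phi$ is not proper, its image is not closed, and the open-and-closed argument breaks down. Alexandrov's proof enlarges the space of polyhedra to include these degenerate ones --- exactly as the paper does implicitly when it warns, after Lemma~\ref{lm:topologies}, that a limit ``can also be a dihedron or a hosohedron'' --- and then one must additionally check that the enlarged space is still a manifold near dihedra and that local injectivity and openness persist there; this is one of the delicate points of the classical proof and is absent from your sketch. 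A secondary confusion: you propose to ``run the global Cauchy argument directly in the projective model,'' but the induced metric is not a projective invariant, so while the projective trick works for the \emph{infinitesimal} statement, the global Cauchy comparison (the sign-counting on the $1$-skeleton) must be carried out in spherical geometry itself, where it does hold for polyhedra contained in an open hemisphere via the spherical arm lemma for arcs of length less than $\pi$.
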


Let us be more precise. If $P$ is a convex polyhedron, then it is clear that the black metric 
of $T_r(P)$ is the induced metric on the boundary of $P^*$, or equivalently that the 
black metric of $T_r(P)^*$ is the induced metric on the boundary of $P$.

Now let $g$ and $P_g$ be as in the statement above. Then $T_r(P_g)^*=:T_l$ is a left flippable tiling 
 of the sphere, and its black metric is the induced metric on $P_g$, that is $g$. This proves the existence part
of Theorem~S\ref{tm:sphere}. Moreover the uniqueness part of Theorem~\ref{thm: alexandrov} implies that $P_g=P_b(T_l)$, that implies the uniqueness part of Theorem~S\ref{tm:sphere}.

\begin{remark}
 Theorem~\ref{thm: alexandrov} is false for metrics with two conical singularities. Actually
for such metrics the cone points have to be antipodal (see Theorem~\ref{thm: tro2} below), and then they are realized in $S^3$ by hosohedra, but it is easy to construct two non-congruent hosohedra having the same induced metric. It follows that for $n=2$ the existence part of Theorem~S\ref{tm:sphere} is true, but the uniqueness part is false. The case $n=1$ is meaningless as  there doest no exist spherical metric on the sphere with
only one conical singularity (Theorem~\ref{thm: tro2}).
\end{remark}

\begin{thm}[{\cite{Tro89,Tro93}}]\label{thm: tro2}
Let $g$ be a spherical metric on the sphere  with two conical singularities of positive curvature. Then
the two singularities are antipodal and the curvature of both singularities are equal.
Furthermore, any two such surfaces are isometric if and only if they have the same
curvature.

A spherical metric on the sphere  cannot have only one conical singularity.
 \end{thm}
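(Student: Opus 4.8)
The plan is to analyze such metrics through their \emph{developing map}, viewing a spherical metric with cone singularities on $S^2$ as an $(SO(3),S^2)$-structure on the complement of its cone points. As a preliminary bookkeeping device I would record the Gauss--Bonnet identity: if the cone angles are $\theta_1,\dots,\theta_n$, so that the singular curvatures $k_i=2\pi-\theta_i$ are positive, then the smooth part has area $4\pi-\sum_i k_i$, i.e.\ area $\sum_i\theta_i-2\pi(n-2)$. For the one-singularity case I would argue by holonomy. Since $S^2\setminus\{p\}$ is homeomorphic to a disk, it is simply connected, so the developing map $D$ is single-valued on it and its restriction to any loop is a genuine closed curve in $S^2$. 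Taking a small metric circle $C_r$ around $p$, the local cone structure of angle $\theta$ forces $D(C_r)$ to be an arc subtending angle $\theta$ about $D(p)$; being closed forces $\theta\in 2\pi\Z$, which is incompatible with $\theta=2\pi-k\in(0,2\pi)$. This rules out a single cone point.

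For two cone points $p_1,p_2$ I would pass to a cutting argument. First I would produce a minimizing geodesic $\gamma$ joining $p_1$ to $p_2$: the metric completion is a compact length space, so $\gamma$ exists, is simple, and its interior lies in the smooth (locally spherical) part. Cutting $S^2$ along $\gamma$ yields a disk bounded by two copies $\gamma^+,\gamma^-$ of $\gamma$ meeting at $p_1$ and $p_2$; on this simply connected surface the developing map is single-valued and sends $\gamma^\pm$ to two geodesic (great-circle) arcs $g^+,g^-$ of equal length $d(p_1,p_2)$ running from $D(p_1)$ to $D(p_2)$, the angle subtended by the image at $D(p_i)$ being exactly the full cone angle $\theta_i$. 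The developed region is therefore a spherical bigon. The key geometric input is that two distinct great-circle arcs sharing both endpoints can only bound such a bigon when those endpoints are antipodal, and that two great circles meet at their (antipodal) intersection points making equal angles; the degenerate case in which $g^+,g^-$ lie on a single great circle forces equal length $\pi$ and angle $\pi$ as well. In every case this yields $D(p_2)=-D(p_1)$, hence $d(p_1,p_2)=\pi$ (the two cone points are antipodal), and $\theta_1=\theta_2$, hence $k_1=k_2$.

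Finally, for the classification up to isometry I would note that the bigon above is a spherical lune, entirely determined by its vertex angle $\alpha=\theta_1=\theta_2=2\pi-k$, and that re-gluing $g^+$ to $g^-$ reconstructs the metric as the ``football'' $d\phi^2+\beta^2\sin^2\phi\,d\psi^2$ with $2\pi\beta=\alpha$. Thus the isometry type depends only on the common curvature $k$, giving both the existence of these metrics and the assertion that two of them are isometric if and only if they share this curvature; conversely any isometry preserves cone angles, so the condition is necessary. I expect the main obstacle to lie in the two-singularity step: carefully justifying the existence, simplicity, and interior-regularity of the connecting geodesic, verifying that the developing map is injective on the cut disk so that the developed region is an \emph{embedded} lune rather than a merely immersed one, and handling the boundary cases $\theta_i=\pi$ and angles in $(\pi,2\pi)$ where the naive lune picture must be argued with care.
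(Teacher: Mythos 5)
The paper does not actually prove this statement: it is quoted from Troyanov \cite{Tro89,Tro93}, where the argument is conformal--analytic (the metric is written as $e^{2u}|dz|^2$ on the punctured Riemann sphere and the developing map is analysed as a multivalued holomorphic function, equivalently via the Liouville equation). Your synthetic route --- trivial holonomy on the simply connected complement of a single cone point, then cut-and-develop onto a spherical lune for two cone points --- is therefore a genuinely different proof, and a standard one in the cone-surface literature; it is more elementary and produces the explicit ``football'' normal form directly, whereas Troyanov's method is the one that scales to $n$ singularities and to prescribed-curvature problems, which is why the paper cites him rather than reproving the result.

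Two of the issues you flag at the end do need to be supplied, and both are short. First, to see that the developed arcs $g^+$ and $g^-$ can only reconverge at the antipode of $D(p_1)$ you need their common length to be at most $\pi$: this holds because the interior of the minimizing geodesic lies in the smooth curvature-$1$ region, so length greater than $\pi$ would force an interior conjugate point and contradict minimality. Combined with the fact that the developed initial directions at $D(p_1)$ differ by $\theta_1\notin 2\pi\Z$ (so the arcs are genuinely distinct), this gives $D(p_2)=-D(p_1)$ and $d(p_1,p_2)=\pi$, and the equality of the two vertex angles of a lune then gives $\theta_1=\theta_2$. Second, the injectivity of the developing map on the cut disk, needed for the uniqueness step, follows from an area--degree count: Gauss--Bonnet gives the disk area $4\pi-k_1-k_2=2\theta$, which equals the area of the lune of angle $\theta$, so the orientation-preserving immersion has degree $1$ over the lune and degree $0$ over its complement, hence is a homeomorphism onto the lune. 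With these two points filled in, your argument is complete and correct.
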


\begin{cor}\label{lem: goutte}
The only examples of flippable tiling on the sphere with two black faces are the ones described in Example~\ref{ex: til deg pol}. 
Moreover
there does not exist any flippable tiling on the sphere with only one black face or only one white face.
\end{cor}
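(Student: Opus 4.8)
The plan is to run everything through the dictionary between spherical flippable tilings and convex polyhedra (in the wide sense, allowing dihedra and hosohedra) established in Proposition~\ref{prop: proj sph} and Proposition~\ref{pr:white}: to a left flippable tiling $T_l$ one associates its white polyhedron $P_w(T_l)$, under which the black faces of $T_l$ correspond to the \emph{vertices} of $P_w(T_l)$ (via polar links) and the white faces correspond to the \emph{faces} of $P_w(T_l)$, the $1$-skeleton being the incidence graph. By the identity $T_l(P_w(T_l))=T_l$ of~(\ref{eq:white pol}), the tiling is determined up to isometry by $P_w(T_l)$, so counting black (resp. white) faces amounts to counting vertices (resp. faces) of the associated polyhedron. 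For a right flippable tiling one argues identically, or passes to $T^*$.

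First I would treat the case of two black faces. Such a tiling corresponds to a convex polyhedron $P:=P_w(T_l)$ with exactly two vertices; as noted right after Proposition~\ref{pr:white}, this forces $P$ to be a hosohedron, with two antipodal vertices $x,-x$ and $n\geq 2$ digon faces meeting along the geodesic through $x$ and $-x$. The two black faces are then the polar links of $x$ and $-x$, i.e. the two faces of the dual dihedron $P^*=P_b(T_l)$, hence two isometric copies of a single convex $n$-gon $p$ (a digon when $n=2$); the $n$ white faces are exactly the digon faces of the hosohedron. This is precisely the combinatorial and metric data produced in Example~\ref{ex: til deg pol} from a polygon and its antipode, and since $P\mapsto T_l(P)$ is a bijection onto such tilings, every two-black-face tiling coincides with one listed there.

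Finally I would rule out a single black or a single white face. Under the same correspondence, a tiling with one black (resp. one white) face would produce a convex polyhedron, dihedron or hosohedron in $S^3$ with exactly one vertex (resp. one face); but every object on this list has at least two vertices and at least two faces (a genuine convex polyhedron has at least four of each, while dihedra, hosohedra and digons each have at least two vertices and two faces), so no such polyhedron exists. Alternatively one can invoke Theorem~\ref{thm: tro2}: the black metric $h_b$ has exactly one cone singularity per white face, so a single white face would give a spherical metric on $S^2$ with one cone point, which is impossible; applying the same to the white metric (one cone point per black face) excludes a single black face.

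The main obstacle will be the careful bookkeeping of the degenerate strata: on $S^2$ faces are allowed to be digons, and the associated polyhedron may be a hosohedron, a dihedron, or a genuine polyhedron, so one must check at each step that the vertex/face count and the identification with Example~\ref{ex: til deg pol} survive across these cases --- in particular that the two black faces genuinely emerge as antipodal polygons separated by digons, and that no spurious one-vertex or one-face degeneration slips through.
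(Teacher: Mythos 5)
Your argument is correct, but it takes a different route from the paper's. The paper disposes of the whole corollary in three lines by passing to the white metric: a tiling with two black faces yields a spherical metric on $S^2$ with two cone points of positive curvature, and Theorem~\ref{thm: tro2} forces these to be antipodal (and of equal curvature), whence the black faces are antipodal; the impossibility of a single black or white face likewise follows from the last sentence of Theorem~\ref{thm: tro2} applied to the white or black metric. You instead go through the polyhedral dictionary: two black faces force $P_w(T_l)$ to be a hosohedron (as the paper notes after Proposition~\ref{pr:white}), and the remark following Proposition~\ref{prop: proj sph} identifies the projections of hosohedra with the tilings of Example~\ref{ex: til deg pol}. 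Your route is arguably more faithful to the statement being proved, since it delivers the full identification with Example~\ref{ex: til deg pol} (antipodal polygons separated by digons), whereas the paper's proof only explicitly concludes antipodality of the black faces and leaves the rest implicit; the paper's route is shorter and needs no case analysis of degenerate polyhedra. One small caveat on your treatment of the single-face case: your primary argument invokes Proposition~\ref{pr:white} to produce a convex polyhedron, dihedron or hosohedron with one vertex, but the convexity proof of that proposition uses the existence of a second conical singularity, so strictly speaking you cannot feed it a hypothetical one-black-face tiling without re-examination. Your alternative via Theorem~\ref{thm: tro2} (one cone point is impossible) is clean and is in fact the argument the paper relies on, so you should make that the main line for this part rather than a fallback.
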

\begin{proof}
Consider the white metric of a flippable tiling on the sphere with two black faces. We obtain 
a spherical metric on the sphere with two conical singularities of positive curvature.
Theorem~\ref{thm: tro2} says that the singularities have to be antipodal, hence the 
black faces of the tiling have to be antipodal. 
\end{proof}

\subsection{Proof of Theorem~S\ref{tm:Sarea}}

Let $\mathcal{M}^1_n(K)$ be the space of spherical metrics on the sphere with $n$ conical
singularities of positive curvature $k_i$ ($K$ is as in Definition~\ref{defS:K}).

There is a map from $\cT^{r,1}_n(K)$ to $\mathcal{M}^1_n(K)$ which consists of taking the 
white metric. This map is bijective as its inverse is given by (a suitable version) of
Theorem~S\ref{tm:sphere}. The proof of Theorem~S\ref{tm:Sarea} follows because by the following theorem 
$\mathcal{M}^1_n(K)$ is parameterized by the space of configurations
of $n$ distinct points on $S^2$.
 
\begin{thm}[{\cite{LT92}}]
 Let $k=(k_1,\cdots, k_n)\in (\R_+)^n$, $n>2$, such that $\sum_{i=1}^n k_i < 4\pi$ and $\sum_{i\not= j}k_i>k_j$. For each conformal structure on the $n$-punctured sphere 
 there 
exists a unique conformal spherical metric  with cone points of curvature $k_i$.
\end{thm}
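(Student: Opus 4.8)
The plan is to deduce the theorem from Alexandrov's Existence and Uniqueness Theorem (Theorem~\ref{thm: alexandrov}) by studying the map that forgets the metric but remembers the conformal structure. Since the cone angles $\theta_i=2\pi-k_i$ all lie in $(0,2\pi)$, a spherical cone metric with curvatures $K=(k_1,\dots,k_n)$ is precisely one realized on the boundary of a convex polyhedron in $S^3$, so Theorem~\ref{thm: alexandrov} identifies the space $\mathcal{M}^1_n(K)$ of such metrics with the space $\mathcal{P}(K)$ of convex polyhedra in $S^3$ with $n$ vertices of prescribed curvatures $k_i$, taken up to global isometry. I would then define
\[
\Phi\colon \mathcal{M}^1_n(K)\longrightarrow \mathcal{C}_n,
\]
where $\mathcal{C}_n$ is the space of configurations of $n$ distinct points on $S^2$ modulo M\"obius transformations (equivalently, conformal structures on the $n$-punctured sphere), sending a metric to its underlying conformal class with the $n$ cone points marked. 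Because each cone angle is in $(0,2\pi)$ the conformal structure extends across each singularity to a genuine marked point, so $\Phi$ is well defined and continuous. A dimension count makes the two spaces match: $\mathcal{P}(K)$ has dimension $3n-6$ minus the $n$ curvature constraints (the total area $4\pi-\sum_i k_i$ being free, all $n$ constraints are independent), giving $2n-6=\dim\mathcal{C}_n$. The goal is to show that $\Phi$ is a homeomorphism.

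First I would show $\Phi$ is a local homeomorphism by an infinitesimal rigidity argument through the Liouville equation. Writing a competing metric as $g'=e^{2u}g$ with both metrics of curvature $1$ and the same cone angles, $u$ solves $\Delta_g u + e^{2u}-1=0$ away from the cone points with bounded behaviour at them; linearizing at a solution gives the operator $\Delta_g+2$ acting on functions compatible with the cone structure. Invertibility of this operator transverse to the $\mathrm{PSL}(2,\mathbb{C})$-directions (which are exactly the kernel on the smooth round sphere) is the statement that $\Phi$ is a local diffeomorphism; this is where the cone-angle bounds $k_i<2\pi$, guaranteed by the inequalities $\sum_{i\neq j}k_i>k_j$, are used to control the bottom of the spectrum of $\Delta_g+2$ and exclude spurious kernel.

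Next I would prove $\Phi$ is proper. Suppose metrics $g_m$ have $\Phi(g_m)$ converging inside $\mathcal{C}_n$, so the marked points stay distinct and no collision of cone points occurs. The total area $4\pi-\sum_i k_i$ is fixed, which prevents collapse or blow-up of area, and the inequalities $\sum_{i\neq j}k_i>k_j$ together with $\sum_i k_i<4\pi$ keep $K$ in the interior of the admissible region, ruling out the degenerations (a vertex flattening, or a thin neck) at which a polyhedron in $\mathcal{P}(K)$ could leave every compact set. A Troyanov-type compactness statement then forces the associated polyhedra $P_m$ to subconverge in $S^3$ to a nondegenerate polyhedron of the same type, so $\Phi$ is proper.

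A proper local homeomorphism onto the connected manifold $\mathcal{C}_n$ is a covering map of some finite number of sheets $d\ge 1$; surjectivity is automatic, which already yields the existence of a metric for every conformal structure. It then remains to establish $d=1$, which is exactly the uniqueness statement. I expect the two main obstacles to be, first, the local rigidity of the previous paragraph: in positive curvature the naive maximum principle fails, so one must genuinely exploit the constraints on $K$ to pin down the kernel of $\Delta_g+2$. Second, and more delicate, is promoting the covering to a homeomorphism: since $\mathcal{C}_n$ is not simply connected for $n\ge 4$, one cannot conclude $d=1$ formally and must either verify uniqueness directly over one conformal structure (for instance a highly symmetric configuration whose symmetry group forces a unique realizing polyhedron) or compute the degree of $\Phi$ via a controlled boundary degeneration.
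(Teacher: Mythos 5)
This statement is not proved in the paper at all: it is quoted from Luo--Tian \cite{LT92} (existence being due to Troyanov \cite{Tro91}) and used as a black box in the proof of Theorem~S\ref{tm:Sarea}, so there is no in-paper argument to compare yours against. Judged on its own merits, your proposal is a reasonable continuity-method \emph{outline}, but it is not a proof: the two steps you yourself flag as ``the main obstacles'' are precisely the mathematical content of the theorem, and neither is carried out. First, local injectivity. You reduce it to the absence of spurious kernel for $\Delta_g+2$ on the cone surface transverse to the M\"obius directions, and assert that the bounds $k_i<2\pi$ ``control the bottom of the spectrum'', but you give no argument. This is exactly where positive curvature bites: the maximum-principle comparison that proves uniqueness for nonpositive curvature fails here, $\Delta_g+2$ genuinely has kernel on the smooth round sphere, and excluding extra kernel on a cone metric is a nontrivial eigenvalue estimate that must use the hypotheses in an essential way (uniqueness is known to fail when angles are allowed to exceed $2\pi$, so no soft argument can work). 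Second, the degree. A proper local homeomorphism onto the connected $(2n-6)$-manifold $\mathcal{C}_n$ is a finite covering of some degree $d\ge 0$, and since $\mathcal{C}_n$ is not simply connected for $n\ge 4$ you cannot conclude $d=1$ formally; but $d=1$ \emph{is} the uniqueness statement, so your scheme as written delivers at best existence --- and even that requires first showing $\mathcal{M}^1_n(K)\ne\varnothing$ to rule out $d=0$, which you do not address. The two escape routes you mention are not available as stated: verifying uniqueness over one distinguished conformal structure is a special case of the theorem and would itself need the missing rigidity input, while degenerating the curvatures moves you out of the fixed stratum $\mathcal{M}^1_n(K)$ rather than computing the degree of $\Phi$ for the given $K$.

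A smaller point: the detour through Alexandrov's theorem and the space of polyhedra buys you essentially nothing. The forgetful map, its linearization via the Liouville equation, and the properness argument all live on the metric side; the polyhedral realization is invoked only for compactness, where a direct analytic compactness statement for the conformal factor (which you would need anyway to justify the convergence of conformal structures) does the same job. If you want to see how the uniqueness is actually established, \cite{LT92} argues directly on the difference of two solutions of the Liouville equation in a fixed conformal class, rather than by a covering-degree computation.
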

(The existence part was obtained in \cite{Tro91}.) Note that Theorem~S\ref{tm:Sarea} is meaningless for $n=1$ and  false for $n=2$ due to Theorem~\ref{thm: tro2}.

\section{Some AdS geometry}
\label{sc:ads}

In this section we recall the definition and some basic properties of the 
anti-de Sitter space $AdS_n$, and in particular of $AdS_3$. We point out in
particular those properties reminiscent of those of the 3-dimensional sphere,
which are most relevant for flippable tilings.

\subsection{The $n$-dimensional anti-de Sitter space and its projective model}

The anti-de Sitter (AdS) space can be considered as the Lorentzian analog
of the hyperbolic space. It can be defined as a quadric in $\R^{n-1,2}$,
which is simply $\R^{n+1}$ with a symmetric bilinear form of signature
$(n-1,2)$:
$$ \langle x,y\rangle_2=x_1y_1+\cdots+ x_{n-1}y_{n-1}-x_{n}y_{n}-x_{n+1}y_{n+1}~, $$
then
$$ AdS_n := \{ x\in \R^{n-1,2}~|~ \langle x,x\rangle_2 = -1\}~, $$
with the induced metric.
Here are some of its key properties.
\begin{itemize}
\item $AdS_n$ is a Lorentzian space of constant curvature $-1$,
\item it is geodesically complete,
\item it is {\it not} simply connected, its fundamental group is $\Z$ and
its universal cover is denoted by $\tilde{AdS_n}$,
\item its totally geodesic space-like hyperplanes are all isometric to the hyperbolic space
$H^{n-1}$,
\item its time-like geodesics are closed curves of length $2\pi$,
\item there is a well-defined notion of ``distance'' between two points,
defined implicitly by the relation:
$$\cosh d(x,y)=-\langle x,y \rangle_2~. $$
This distance is real and positive when $x$ and $y$ are connected by
a space-like geodesic, imaginary when $x$ and $y$ are connected by a
time-like geodesic. It is in both cases equal to the length of the
geodesic segment connecting them.
\end{itemize}

It is sometimes also convenient to consider the quotient of $AdS_n$ by
the antipodal map (sending $x$ to $-x$ in the quadric model above), we
will simply denote it by $AdS_n/\Z_2$ here. In this space, time-like
geodesics are closed curves of length $\pi$. 

As for $H^n$, there is a useful projective model of $AdS_n$, obtained
by projecting in $\R^{n-1,2}$ from the quadric to any tangent plane in the direction
of the origin. However this yields a projective model of only ``half''
of $AdS_n$ (as for the $n$-dimensional sphere). For instance, 
the projection on the hyperplane $H$ of equation $x_{n+1}=1$ is a 
homeomorphism from the subset of $AdS_n$ of points where $x_{n+1}>0$ 
to its image, which is the interior of the intersection with $H$ of the cone of
equation $\langle x,x\rangle_2=0$ in $\R^{n-1,2}$. This intersection
is again a quadric, but now in $\R^n$. 

This model can be considered in the projective $n$-space 
$\R P^n$, it provides a projective model of the whole quotient 
$AdS_n/\Z_2$ as the interior of a quadric of signature $(n-1,1)$. 
Lifting to the double cover of $\R P^n$ we find a projective
model of the whole space $AdS_n$ as the interior of a quadric 
in $S^n$.

This projective model naturally endows $AdS_n$ with a ``boundary
at infinity'', topologically $S^{n-2}\times S^1$. This boundary at 
infinity comes with a conformal Lorentz structure, namely the conformal
class of the second fundamental form of the quadric in $\R^n$ (or
equivalently $\R P^n$, resp. $S^n$). 

\subsection{The 3-dimensional space $AdS_3$}

As a special case of the $n$-dimensional case we find the 3-dimensional
AdS space which will be of interest to us in the sequel:
$$AdS_3:=\{x\in\mathbb{R}^{2,2} \vert \langle x,x\rangle_2 =-1\}.$$
It has some specific features which are not present in the higher-dimensional
case. In some respects $AdS_3$ can be considered as a Lorentz analog of the
$3$-dimensional sphere.
\begin{itemize}
\item $AdS_3$ is isometric to $SL(2, \R)$ with its bi-invariant Killing metric.
(Similarly, $S^3$ is identified with $SU(2)$).
\item Therefore, it has an isometric action of $SL(2,\R)\times SL(2,\R)$, where
the left factor acts by left multiplication and the right factor acts by right
multiplication. The kernel of this action is reduced to $\{ (I,I),(-I,-I)\}$ 
and this identifies the identity component of the 
isometry group of $AdS_3$ with $SL(2,\R) \times SL(2\R)/\Z_2$.
(There is a similar isometric action of $O(3)\times O(3)$ on $S^3$.)
\item There is a notion of duality in $AdS_3$ reminiscent of the polar duality
in $S^3$. It associates to each oriented totally geodesic plane in $AdS_3$ a point, 
and conversely. The dual of a convex polyhedron is a convex polyhedron, with the 
edge lengths of one equal to the exterior dihedral angles of the other. 
\end{itemize}

The decomposition of the isometry group of $AdS_3$ as $SL(2,\R)\times SL(2,\R)/\Z_2$
can also be understood as follows. In the projective model above, $AdS_3/\Z_2$ is identified
with the interior of a quadric $Q\subset \RP^3$. $Q$ is foliated by two families of
projective lines, which which we call $\cL_l$ and $\cL_r$ and call the ``left'' and
``right'' families of lines. Each line of each family intersects each line of the
other family at exactly one point, and this provides both $\cL_l$ and $\cL_r$ with
a projective structure. The isometries in the identity component of 
$\isom(AdS_3/\Z_2)$ permute the lines of $\cL_l$ and of $\cL_r$, and the corresponding
action on $\cL_l$ and $\cL_r$ are projective, so they define two elements of 
$PSL(2,\R)$.

\subsection{Globally hyperbolic AdS manifolds}\label{ssc:gh}

We recall in this section some known facts on globally hyperbolic AdS 3-manifolds,
mostly from \cite{mess}, which are useful below.

Let $S$ be a closed surface of genus at least $2$. We denote by $\isom_0(AdS_3)$ the
identity component of the isometry group of $AdS_3$.

An AdS 3-manifold is a Lorentz manifold with a metric locally modelled on the 
3-dimensional AdS space, $AdS_3$. It is {\it globally hyperbolic} (GH) if 
it contains a closed space-like surface $S$, and any inextensible time-like
curve intersects $S$ at exactly one point. It is {\it globally hyperbolic maximal}
(GHM) if it is maximal --- for the inclusion --- under those conditions. 

Globally hyperbolic AdS manifolds are never geodesically complete. However, a 
GHM AdS manifold is always the quotient by $\pi_1(S)$ of a convex subset $\Omega$ of 
$AdS_3$. This defines a representation $\rho$ of $\pi_1(S)$ in $\isom_0(AdS_3)$, the 
identity component of the isometry group of $AdS_3$. This identity component 
decomposes as $PSL(2,\R)\times PSL(2,\R)$, so $\rho$ decomposes as 
a pair of representations $(\rho_l, \rho_r)$ of $\pi_1(S)$ in $PSL(2,\R)$.
Mess \cite{mess,mess-notes} proved that those two representations have maximal
Euler number, so that they are holonomy representations of hyperbolic metrics
on $S$. 

He also proved the following theorem, which can be considered as an
AdS analog of the Bers Double Uniformization Theorem for quasifuchsian hyperbolic
manifolds.

\begin{thm}[Mess \cite{mess}] \label{tm:mess}
Given two hyperbolic metrics $h_l,h_r\in \cT$ on $S$, 
there is a unique globally hyperbolic AdS structure on $S\times (0,1)$
such that $\rho_l$ (resp. $\rho_r$) is the holonomy representation of
$h_l$ (resp. $h_r$). 
\end{thm}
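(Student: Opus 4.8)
The plan is to follow Mess's original construction, exploiting the decomposition $\isom_0(AdS_3)=PSL(2,\R)\times PSL(2,\R)$ recalled above. Starting from the two hyperbolic metrics, write $\rho_l,\rho_r\colon\pi_1(S)\to PSL(2,\R)$ for the Fuchsian holonomy representations of $h_l$ and $h_r$, and set $\rho=(\rho_l,\rho_r)\colon\pi_1(S)\to\isom_0(AdS_3)$. The first step is to encode $\rho$ by a curve at infinity. In the projective model, $\partial_\infty AdS_3$ is the quadric $Q$, which the two rulings identify with $\cL_l\times\cL_r\cong\RP^1\times\RP^1$, and the action of $\rho$ on $\partial_\infty AdS_3$ is exactly the product of the projective action of $\rho_l$ on $\cL_l$ and of $\rho_r$ on $\cL_r$, that is, the two Fuchsian actions on $\partial_\infty H^2=\RP^1$.

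Next I would produce the limit curve. Since $h_l$ and $h_r$ are hyperbolic structures on the same surface $S$, the identity of $S$ lifts to a $\pi_1(S)$-equivariant quasi-isometry between the two universal covers, which extends to a unique equivariant homeomorphism $\phi\colon\RP^1\to\RP^1$ conjugating the boundary action of $\rho_l$ to that of $\rho_r$. Its graph $\Lambda=\{(\xi,\phi(\xi)):\xi\in\RP^1\}\subset Q$ is then a $\rho$-invariant topological circle. Because $\phi$ is an orientation-preserving homeomorphism of the circle, $\Lambda$ meets each line of each ruling exactly once; since the two rulings $\cL_l,\cL_r$ are precisely the null directions of the conformal Lorentz structure on $Q$ (the second fundamental form of the quadric), this shows that $\Lambda$ is achronal, indeed acausal.

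The geometric heart of the argument, and the step I expect to be the main obstacle, is passing from $\Lambda$ to a domain. I would take $\Omega=\Omega(\Lambda)\subset AdS_3$ to be the invisible domain of $\Lambda$, the set of points joined to no point of $\Lambda$ by a causal curve; dually, using the point/plane duality recalled above, $\Omega$ is the set of points whose dual spacelike plane is disjoint from $\Lambda$. One must then verify that $\Omega$ is open, convex and $\rho$-invariant, that $\rho$ acts on it freely and properly discontinuously, and that $\Omega$ is globally hyperbolic with spacelike Cauchy surfaces on which $\pi_1(S)$ acts cocompactly with quotient homeomorphic to $S$. Granting this, the quotient $M=\Omega/\rho$ is a GHM AdS manifold diffeomorphic to $S\times(0,1)$ whose two holonomy factors are $\rho_l$ and $\rho_r$ by construction, which gives existence. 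This is exactly where the special Lie-group geometry of $AdS_3$ — the two transverse rulings at infinity and the duality between points and spacelike planes — is used in an essential way.

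For uniqueness I would argue by maximality. Any GHM AdS structure on $S\times(0,1)$ is the quotient of a maximal convex globally hyperbolic domain of $AdS_3$ by its holonomy $\rho'=(\rho'_l,\rho'_r)$, and by the maximal Euler number statement recalled above the factors $\rho'_l,\rho'_r$ are Fuchsian; imposing the prescribed data fixes them to be the holonomies of $h_l$ and $h_r$, so $\rho'=\rho$. The achronal curve $\Lambda$, hence its invisible domain $\Omega(\Lambda)$, is completely determined by the pair $(\rho_l,\rho_r)$, and maximality forces the developing image of the structure to be exactly $\Omega(\Lambda)$. Two GHM structures with the prescribed holonomy therefore have isometric developing images over the same domain and coincide, which yields uniqueness.
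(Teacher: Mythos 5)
This statement is quoted in the paper as a known result of Mess, with no proof supplied; there is therefore no internal argument to compare yours against. Your outline is a faithful reconstruction of Mess's original proof: the identification $\partial_\infty AdS_3\cong\cL_l\times\cL_r$, the limit curve as the graph of the unique equivariant homeomorphism of $\RP^1$ conjugating the two Fuchsian boundary actions, the invisible domain of that acausal circle, and uniqueness via maximality of the domain of dependence. Two remarks. First, the step you flag as ``the main obstacle'' really is the entire content of Mess's theorem --- that $\Omega(\Lambda)$ is a convex domain on which $\rho$ acts freely and properly discontinuously with cocompact spacelike Cauchy surfaces --- so as written your text is a correct plan rather than a proof; if this were to be made self-contained one would need the support-plane argument (every point of $\Lambda$ lies on a light-like support plane of the convex hull of $\Lambda$, because $\Lambda$ is acausal) and the cocompactness of the action on the boundary of that convex hull. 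Second, in the uniqueness part note that the statement as phrased in the paper implicitly means \emph{maximal} globally hyperbolic structures (GHM in the notation of Section~\ref{ssc:gh}); without maximality one can restrict the structure to a smaller neighbourhood of a Cauchy surface and uniqueness fails, so your appeal to maximality is not optional and should be stated as part of the hypothesis rather than derived.
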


We will say that $M$ is {\it Fuchsian} if it contains a totally geodesic, 
closed, space-like, embedded surface. This happens if and only if the 
representations $\rho_l$ and $\rho_r$ are conjugate. 

Let $M$ is a GHM AdS 3-manifold. Then $M$ contains a smallest non-empty
convex subset $C(M)$, called its {\it convex core}. If $M$ is Fuchsian,
then $C(M)$ is the (unique) totally geodesic space-like surface in $M$.
Otherwise, $C(M)$ is topologically the product of $S$ by an interval.
Its boundary is the disjoint union of two space-like surfaces each
homeomorphic to $S$. Those two surfaces are pleated surfaces, their
induced metric is hyperbolic and they are bent along a measured 
lamination. 

There are many future convex surfaces (surfaces for which the 
future is convex) in the past of $C(M)$, and many past convex
surfaces in the future of $C(M)$. Given a strictly future
convex surface $S_c\subset M$, we can define the dual surface (using the 
duality in $AdS_3$, see \ref{subsub:dual}) as the set of points dual to the tangent planes of $S_c$,
it is a past convex surface, and conversely. 

\subsection{Equivariant polyhedral surfaces} \label{subsec: pol fuch}

Consider a globally hyperbolic AdS 3-manifold $M$, and a closed, space-like surface 
$\Sigma$ in $M$. Then $\Sigma$ lifts to an {\it equivariant surface} in the universal
cover of $M$, considered as a subset of $AdS_3$. We recall here the basic definitions
and key facts on those equivariant surfaces, focussing on the polyhedral surfaces
which are relevant below.

\subsubsection{Equivariant embeddings}

Given a closed polyhedral surface $S$ in an AdS 3-manifold $M$, one can consider a 
connected component of the lift of $S$ to the
universal cover of $M$. It is a polyhedral surface invariant under the action of
a surface group. This leads to the definition of an equivariant embedding of a surface
in $AdS_3$, as follows.

\begin{defi}\label{def:eq emb}
An {\it equivariant polyhedral embedding of} $S$ in $AdS_3$ is a couple $(\phi,\rho)$, where:
\begin{itemize}
\item $\phi$ is a locally convex, space-like polyhedral immersion of $\St$ in $AdS_3$,
\item $\rho:\pi_1(S)\rightarrow \isom_0(AdS_3)$ is a homomorphism, 
\end{itemize}
satisfying the following natural compatibility condition:
$$ \forall x\in \St, \forall \gamma\in \pi_1(S), \phi(\gamma.x)=\rho(\gamma)\phi(x)~. $$
An equivariant polyhedral surface is a surface of the form $\phi(\St)$, where $(\phi,\rho)$
is an equivariant polyhedral embedding of $S$ in $AdS_3$.
\end{defi}

The assumption that $\phi(\St)$ is space-like means that it is made of pieces of space-like 
planes, but also that each support plane is space-like. With this assumption
we get the following classical results, see \cite{mess,mess-notes,BBZ07} and references therein.
The \emph{boundary at infinity} $\partial F$ of a surface $F$ of $AdS_3$ is the intersection, in an affine chart,
of the closure of the surface with the boundary at infinity of $AdS_3$.

\begin{lemma}\label{lem: basics}
Let $(\phi,\rho)$ be an equivariant polyhedral embedding of $S$ in $AdS_3$. Then
\begin{enumerate}
 \item  $\phi$ is an  embedding, 
\item each time-like geodesic of $AdS_3$ meets $\phi(\St)$ exactly once,
\item each light-like geodesic of $AdS_3$ meets $\phi(\St)$ at most once,
\item there exists an affine chart which contains $\phi(\St)$,
\item and if a light-like geodesic has one of its endpoints on $\partial \phi(\St)$
then the geodesic does not meet $\phi(\St)$.
\end{enumerate}
\end{lemma}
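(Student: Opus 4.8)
The plan is to work throughout in the projective model of $AdS_3$ as the interior of a quadric $Q\subset S^3$, in which space-like totally geodesic planes appear as affine (projective) planes meeting $Q$, time-like geodesics appear as line segments crossing the interior, and light-like geodesics as segments tangent to $Q$ (lying on the two rulings $\cL_l,\cL_r$). The space-likeness assumption on $\phi$ says that every support plane of $\phi(\St)$ is such a space-like plane; local convexity then makes $\phi(\St)$ a locally convex hypersurface for the affine structure of the chart. The whole argument reduces to showing that $\phi(\St)$ is a graph over a fixed space-like plane $P_0\cong H^2$.

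First I would fix a space-like totally geodesic plane $P_0$ together with the foliation of (a half of) the projective model by the time-like geodesics orthogonal to $P_0$; these are the fibers of a projection $\pi$ onto $P_0$. Space-likeness of the support planes forces $\phi(\St)$ to be transverse to every such fiber, so $\pi\circ\phi$ is a local homeomorphism. To upgrade this to a covering map I would use equivariance: since $\rho(\pi_1(S))$ acts cocompactly and $\phi$ descends to the compact quotient surface $\Sigma$, the map $\pi\circ\phi$ is proper, while local convexity rules out folding at the edges and vertices of the polyhedral surface. A proper local homeomorphism onto the simply connected $P_0\cong H^2$ is a homeomorphism; hence $\phi(\St)$ is a graph over $P_0$. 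This yields assertion (1) at once (the graph is embedded), assertion (2) (each fiber, i.e.\ each time-like geodesic, meets the graph in exactly one point), and assertion (4) (the graph lies in the region swept out by the fibers over $P_0$, which is contained in a single affine chart of the model).

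For the causal assertions (3) and (5) I would exploit that a space-like graph is achronal and, more precisely, that the time coordinate along the fibers varies strictly more slowly on $\phi(\St)$ than along any causal geodesic. Concretely, a light-like geodesic meeting $\phi(\St)$ in two distinct points would join two points of the surface by a causal segment, contradicting the achronality of a space-like graph; this gives (3). For (5), a light-like geodesic with an endpoint on $\partial\phi(\St)\subset Q$ lies in the light-like ruling plane through that boundary point; since convexity places $\phi(\St)$ on one side of this plane and the line is asymptotic to the surface at the boundary point, any interior crossing would again produce a causal segment on $\phi(\St)$ or a contact forcing a non-space-like support plane, both of which are excluded.

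The main obstacle is this first step: promoting transversality into a genuine global graph and embedding statement. In the smooth setting one would argue through the second fundamental form, but here the polyhedral structure requires controlling the behaviour of the support planes across edges and at vertices, and combining local convexity with the properness coming from cocompact equivariance to exclude spiralling or non-proper limits. Once the graph property is secured, assertions (1), (2) and (4) are immediate and the causal assertions (3) and (5) follow from the elementary achronality arguments above. These facts are classical, and for the full details I would ultimately cite \cite{mess,mess-notes,BBZ07}, the sketch above being the shape of the argument.
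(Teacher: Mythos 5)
The paper does not actually prove this lemma: it is stated as a collection of ``classical results'' with a pointer to \cite{mess,mess-notes,BBZ07}, so there is no internal argument to compare yours against. Judged on its own, your sketch follows the standard route (graph over a fixed space-like plane, then achronality for the causal statements) and, since you also end by deferring to the same references, it is an acceptable treatment at the level the paper itself adopts.

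Two points deserve attention if you want the sketch to stand as a proof. First, the properness step is not as automatic as you make it sound: the orthogonal projection $\pi$ onto a fixed space-like plane $P_0$ is equivariant under $\rho(\pi_1(S))$ only when $\rho$ preserves $P_0$, i.e.\ in the Fuchsian case. For a general equivariant embedding the cocompactness of the action on $\phi(\St)$ does not directly transfer to properness of $\pi\circ\phi$ over $P_0$, and the classical proofs circumvent this (via achronality of the lift, the weakly space-like boundary curve at infinity, and the structure of the invariant domain) rather than by a bare ``proper local homeomorphism onto $H^2$'' argument. Second, a small but genuine slip: light-like geodesics of $AdS_3$ appear in the projective model as lines \emph{tangent} to the quadric $Q$ at a single point, not as lines ``lying on the two rulings $\cL_l,\cL_r$'' --- the ruling lines are contained in $Q=\partial_\infty AdS_3$ and are not geodesics of $AdS_3$ at all. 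Your argument for item (5) is nevertheless in the right spirit, since the light-like plane through a boundary point meets $Q$ exactly in the two ruling lines through that point and serves as the support plane you need.
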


A key remark, basically obtained by G.~Mess \cite{mess}, is that the holonomy
representation is the ``product'' of the holonomy representations of two 
hyperbolic surfaces.

\begin{prop} \label{pr:mess}
Let $(\phi,\rho)$ be an equivariant polyhedral embedding of $S$ in $AdS_3$,
let $\rho=(\rho_l,\rho_r)\in PSL(2,\R)\times PSL(2,\R)$. Then $\rho_l$
and $\rho_r$ have maximal Euler number, so that they are the holonomy representations
of hyperbolic metrics on $S$.  
\end{prop}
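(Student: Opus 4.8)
The plan is to follow Mess's original argument, working in the projective model and on the boundary torus of $AdS_3$, using the properties of the embedding collected in Lemma~\ref{lem: basics}. Recall that $\partial_\infty AdS_3\cong \RP^1\times \RP^1$, where the two factors are the leaf spaces $\cL_l$ and $\cL_r$ of the two rulings of the quadric $Q$, and that the identity component $\isom_0(AdS_3)=PSL(2,\R)\times PSL(2,\R)$ acts on this torus through the product of its two projective actions, the left factor acting on $\cL_l$ and the right factor on $\cL_r$. Thus $\rho=(\rho_l,\rho_r)$ acts diagonally on $\partial_\infty AdS_3$, and the two projections $p_l,p_r$ onto $\cL_l,\cL_r$ are equivariant for $\rho_l,\rho_r$ respectively.

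First I would analyze the boundary at infinity $\Lambda:=\partial\phi(\St)$. Since $\phi(\St)$ is space-like and locally convex it is achronal; by parts (3) and (5) of Lemma~\ref{lem: basics} no light-like geodesic meets it twice and none with an endpoint on $\Lambda$ meets it at all. Because the light-like directions of the boundary torus are exactly the two rulings, this forces $\Lambda$ to be a weakly space-like curve, hence the graph of a monotone map $\cL_l\to\cL_r$; using that $\phi$ is an embedding (part (1)) and that $\phi(\St)$ separates an affine chart (part (4)) one upgrades this to the statement that $\Lambda$ is the graph of an orientation-preserving homeomorphism $f:\cL_l\to\cL_r$. Equivariance of $\Lambda$ then reads $f\circ\rho_l(\gamma)=\rho_r(\gamma)\circ f$ for all $\gamma\in\pi_1(S)$, so $p_l|_\Lambda$ and $p_r|_\Lambda$ are degree-one equivariant homeomorphisms and the two boundary actions are conjugate.

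It remains to prove maximality, and this is the main obstacle, since the graph statement alone gives only that the two Euler numbers agree, not that they are extremal. Here I would invoke the two-dimensional topology of the surface rather than just its boundary curve. Exactly as in the spherical case (proof of Proposition~\ref{pr:white}), local convexity together with completeness shows that $\phi(\St)$ is homeomorphic to a plane; since $(\phi,\rho)$ is equivariant with $\St/\pi_1(S)=S$ and $\phi$ is an embedding, $\pi_1(S)$ acts freely, properly discontinuously and cocompactly on $\phi(\St)\cong\R^2$, with quotient homeomorphic to $S$, and this action extends continuously to the closed disk $\overline{\phi(\St)}=\phi(\St)\cup\Lambda$. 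For such a free cocompact action on a disk, the Euler number of the induced action on the boundary circle has absolute value $|\chi(S)|=2g-2$; transporting through the equivariant homeomorphism $p_l|_\Lambda$ gives $|\mathrm{eu}(\rho_l)|=2g-2$, which is maximal by the Milnor--Wood inequality, and likewise for $\rho_r$. By Goldman's theorem a representation of maximal Euler number is the holonomy of a hyperbolic metric, which is the asserted conclusion.

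I would expect the delicate points to be the two regularity claims about $\Lambda$ --- that achronality forces it to be a graph, and that it is in fact the graph of a \emph{homeomorphism} rather than merely a monotone map --- together with the clean identification of the boundary circle action with $\rho_l$ and the computation of its Euler number from the filling disk. These are precisely the places where Lemma~\ref{lem: basics} and Mess's analysis enter, and I would cite \cite{mess,mess-notes} (and \cite{BBZ07}) for them rather than reprove them in detail.
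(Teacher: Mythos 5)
Your argument is correct in outline, but it takes a genuinely different route from the paper's. The paper argues softly: using Lemma~\ref{lem: basics} and \cite[5.22]{BBZ07} it produces a $\rho(\pi_1(S))$-invariant neighborhood $U$ of $\phi(\St)$ on which the action is free and properly discontinuous, so that $U/\rho(\pi_1(S))$ is an AdS $3$-manifold containing a closed space-like surface; it then invokes the fact that such a manifold embeds isometrically in a globally hyperbolic maximal AdS manifold and quotes Mess's theorem that the holonomy of a GHM manifold splits as a pair of maximal representations. You instead re-derive the core of Mess's theorem at the level of the equivariant surface itself: the limit curve $\Lambda$ as a graph over the two rulings $\cL_l,\cL_r$, identification of the boundary action of the compactified disk $\phi(\St)\cup\Lambda$ with $\rho_l$ (resp.\ $\rho_r$), the Milnor-type computation giving Euler number $\pm\chi(S)$ for that boundary action, then Milnor--Wood and Goldman. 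Your route is more self-contained and shows exactly where space-likeness enters, at the price of the regularity of $\Lambda$; the paper's route buys a three-line proof by hiding all of this inside the citation of \cite{mess}. One point to adjust in your version: you should not insist that $\Lambda$ is the graph of a \emph{homeomorphism} --- a priori achronality only gives the graph of a monotone degree-one relation (the homeomorphism statement is clear only a posteriori, once $\rho_l$ and $\rho_r$ are known to be Fuchsian). This is harmless, because a monotone degree-one equivariant surjection is a semi-conjugacy, and semi-conjugate circle actions have equal Euler number, which is all your argument uses.
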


\begin{proof}
Consider the polyhedral surface $\phi(\St)$. It has a free and properly discontinuous cocompact
action of $\rho(\pi_1(S))$
Therefore it follows from Lemma~\ref{lem: basics} that there is a neighborhood $U$ of $\phi(\St)$
in $AdS_3$ on which $\rho(\pi_1(S))$ also acts freely and properly discontinuously \cite[5.22]{BBZ07}. The quotient
$M=U/\rho(\pi_1(S))$ is an AdS 3-manifold which contains a closed space-like surface
(namely, $\phi(\St)/\rho(\pi_1(S))$). 

It follows (see \cite{mess,mess-notes}) that $M$ has an isometric embedding in a
(unique) globally hyperbolic AdS 3-manifold $N$. Moreover $N=\Omega/\rho(\pi_1(S))$,
where $\Omega$ is a maximal convex subset of $AdS_3$ on which $\rho(\pi_1(S))$ 
acts properly discontinuously. Finally, it is shown in \cite{mess} that 
$\rho=(\rho_l,\rho_r)$, where $\rho_l$ and $\rho_r$ have maximal Euler number.
\end{proof}

\subsubsection{Fuchsian equivariant surfaces}

Among the equivariant surfaces, some are simpler to analyze. 

\begin{defi}
An equivariant polyhedral embedding $(\phi,\rho)$ of $S$ in $AdS_3$ is {\it Fuchsian} if its 
representation $\rho$ globally fixes a totally geodesic space-like plane in $AdS_3$.
A {\it Fuchsian polyhedral  surface} is a polyhedral surface of the form $\phi(\St)$, where 
$(\phi,\rho)$ is a Fuchsian equivariant polyhedral embedding.
\end{defi}

Equivalently, $(\phi,\rho)$ is Fuchsian if, in the identification of $\isom_0(AdS_3/\Z_2)$ with 
$PSL(2,\R)\times PSL(2,\R)$, $\rho$ corresponds to a couple 
$(\rho_l,\rho_r)\in PSL(2,\R)\times PSL(2,\R)$ such that $\rho_l$ is conjugate to $\rho_r$.

\subsubsection{Equivariant polyhedral surfaces as convex hulls}

Let $(\phi,\rho)$ be a convex polyhedral embedding of  $S$. 
If $V$ is the set of the vertices of $\phi(\St)$, we denote by $\partial CH(V)$ the boundary of the convex hull of $V$
(there exists an affine chart which contains $\phi(\St)$, see Lemma~\ref{lem: basics},
hence it is meaningful to speak about convex hull). Let $v_1,\cdots,v_n$ 
be a set of representatives of the equivalence classes for the action of
$\rho(\pi_1(S))$ on $V$. The embedding is said to have $n$ fundamental vertices.

\begin{lemma}\label{ref:lem variation}
An equivariant polyhedral embedding $(\phi,\rho)$ of $S$ is determined by $\rho$ and $v_1,\cdots,v_n$.
Moreover any sufficiently small perturbation of the $v_i$ gives another 
equivariant polyhedral embedding of $S$ with $n$ fundamental vertices. 
\end{lemma}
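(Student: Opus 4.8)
The plan is to prove the two assertions of Lemma~\ref{ref:lem variation} separately: first that the data $(\rho, v_1, \ldots, v_n)$ determines the embedding $(\phi, \rho)$, and second the openness statement that small perturbations of the $v_i$ still yield equivariant polyhedral embeddings with $n$ fundamental vertices.

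For the determination part, I would argue as follows. Given $\rho$ and the fundamental vertices $v_1, \ldots, v_n$, the full vertex set is forced to be $V = \{\rho(\gamma) v_i : \gamma \in \pi_1(S),\ 1 \le i \le n\}$, since the vertex set must be $\rho$-invariant and the $v_i$ are a set of orbit representatives. By Lemma~\ref{lem: basics}, the image $\phi(\St)$ lies in a single affine chart of $AdS_3$, so it is meaningful to form the convex hull $CH(V)$ in that chart. The key point is that a convex space-like polyhedral surface is recovered as (a component of) the boundary of the convex hull of its vertices; since $\phi$ is locally convex and space-like by hypothesis, $\phi(\St)$ coincides with the appropriate (say, lower/future) boundary component $\partial CH(V)$. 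Hence $\phi$ is determined up to the parametrization of $\St$, which is all that is meant. I would invoke the earlier convexity facts, in the same spirit as the argument used for the white polyhedron in Proposition~\ref{pr:white}, to identify $\phi(\St)$ with $\partial CH(V)$ intrinsically from $V$, and therefore from $(\rho, v_1, \ldots, v_n)$.

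For the perturbation (openness) part, I would fix $\rho$ and consider the map that sends a tuple $(v_1', \ldots, v_n')$ near $(v_1, \ldots, v_n)$ to the $\rho$-invariant set $V' = \{\rho(\gamma) v_i'\}$ and then to the boundary of its convex hull. The combinatorial type of $\partial CH(V)$ is locally constant under small motions of the vertices: each vertex of $\phi(\St)$ is a genuine (extreme) vertex, each face is a nondegenerate space-like polygon, and being a vertex, an edge, or a space-like support plane are all open conditions. Thus for $(v_i')$ close enough to $(v_i)$, the set $V'$ is still in convex position with the same incidence pattern, the boundary $\partial CH(V')$ is still a locally convex space-like polyhedral surface, and it is still $\rho$-equivariant with exactly $n$ fundamental vertices $v_1', \ldots, v_n'$. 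This produces the perturbed embedding $(\phi', \rho)$.

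The main obstacle I anticipate is controlling the \emph{space-like} condition under perturbation, which is special to the Lorentzian setting and has no analog in the spherical case. One must ensure not only that the faces remain space-like but that every support plane of the perturbed surface stays space-like, so that Lemma~\ref{lem: basics} continues to apply and the convex-hull description remains valid; a vertex perturbation could in principle tilt an edge or a support plane toward the light cone. I would handle this by noting that the space-like and local convexity conditions hold with a definite margin on the compact quotient (using cocompactness of the $\rho$-action), so they persist under sufficiently small perturbations; the quantitative estimate is uniform because only finitely many local configurations occur modulo $\rho$. Verifying that this margin is genuinely open, rather than merely generic, is the delicate step, but it follows from the continuity of the relevant causal and convexity conditions together with the compactness of the fundamental domain.
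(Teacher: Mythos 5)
Your overall strategy is the paper's: recover $\phi(\St)$ as a boundary component of the convex hull of the orbit $V=\rho(\pi_1(S))\{v_1,\dots,v_n\}$, then argue that extremality, convexity and space-likeness are open conditions, using cocompactness to make the margins uniform. The last point in particular (space-likeness of faces and support planes persisting because the relevant conditions hold with a definite margin on a compact fundamental domain) is exactly how the paper argues. But there are two concrete gaps. First, the identification $\phi(\St)=\partial CH(V)$ (one component of it) is the hard part of the determination statement, and it cannot be obtained ``in the spirit of Proposition~\ref{pr:white}'': that argument passes from local to global convexity via van Heijenoort's theorem for a \emph{compact} surface in an affine chart, whereas here $\phi(\St)$ is non-compact in the chart and accumulates on the limit circle $c(\rho)=\partial_\infty\phi(\St)$. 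Local convexity at interior points does not by itself rule out failure of global convexity at infinity. The paper's actual argument glues $\phi(\St)$ to the opposite boundary component $\partial C(\rho)_-$ of the convex core along $c(\rho)$ and verifies local convexity at each point $x\in c(\rho)$ by exhibiting the light-like plane $p_x$ through $x$ as a support plane (this uses that $c(\rho)$ is weakly space-like); only then does one conclude that $V$ consists of extreme points and that $\partial CH(V)=\phi(\St)\cup\partial C(\rho)_-\cup c(\rho)$. Without some version of this step your first paragraph is an assertion, not a proof.

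Second, your claim that ``the combinatorial type of $\partial CH(V)$ is locally constant under small motions of the vertices'' is false: a face with more than three vertices generically subdivides into triangles under perturbation. The lemma does not require constant combinatorics, but you still must show the perturbed surface is a polyhedral embedding in the sense of Definition~\ref{def:eq emb}, i.e.\ that its faces remain compact with finitely many vertices; since $V$ is infinite and accumulates at infinity, this is not automatic. The paper handles it by observing that for each face $F$, the boundary at infinity of the plane of $F$ is disjoint from $c(\rho)$, so all points of $V$ not on $F$ (and $\partial_\infty\phi(\St)$ itself) stay a definite distance from that plane; hence after a small perturbation $F$ is replaced by a union of compact faces whose vertices are among the perturbed vertices of $F$. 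Relatedly, one must check that the perturbed hull boundary stays off $C(\rho)$ (the distance from $\phi(\St)$ to $C(\rho)$ is bounded below by cocompactness), since $CH(V')$ always contains $C(\rho)$ and otherwise $\partial CH(V')$ could pick up non-polyhedral pieces of the bent boundary of the convex core. These uniform-at-infinity controls are the substance of the openness statement and are missing from your sketch.
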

\begin{proof}
It is clear that the data of $\rho$ and $v_1,\cdots,v_n$ gives $V$.

Recall that the \emph{convex core} $C(\rho)$ for $\rho$ 
is the  minimal (for the inclusion) non-empty convex set in $AdS_3$ globally invariant under the action of $\rho$.
If $(\phi,\rho)$ is Fuchsian, the convex core is reduced to a totally geodesic surface, otherwise
it has non-empty interior and two boundary components, which are pleated space-like surfaces, 
isometric to the hyperbolic plane for the induced metric.
The convex core is the convex hull of its boundary at infinity, which is a (topological) circle $c(\rho)$
drawn on the boundary at infinity of $AdS_3$. Actually this boundary at infinity is exactly 
$\partial \phi(\St)$.
Note that $\phi(\St)$ does not meet the interior of $C(\rho)$, otherwise the intersection between $C(\rho)$ and
 the half-spaces delimited by the planes
containing the faces of $\phi(\St)$ would define a convex set invariant under $\rho$ 
and contained in $C(\rho)$, this is impossible.
It follows that $\phi(\St)$ is contained in one side of $C(\rho)$. Let us denote by $\partial C(\rho)_-$ the component of
the boundary of $C(\rho)$ opposite to $\phi(\St)$.

In an affine chart, $\phi(\St)$ and $\partial C(\rho)_-$ are both locally convex surfaces. 
The intersection of the closure of both with $\partial_\infty AdS_3$ is the curve $c(\rho)$.
Since $c(\rho)$ is the boundary at infinity of a space-like surface in $AdS_3$, it is 
a weakly space-like curve in $\partial_\infty AdS_3$, for its conformal Lorentz structure.
(Weakly space-like means that no point of $c(\rho)$ is in the future of another.)
Given a point $x\in c(\rho)$, there is a unique light-like plane $p_x$ in $AdS_3$
through $x$. The intersection of $p_x$ with $\partial_\infty AdS_3$ is the union of the two light-like
geodesics of $\partial_\infty AdS_3$ through $x$ (again for the conformal Lorentz structure on 
$\partial_\infty AdS_3$). Therefore $c(\rho)$, since it is weakly space-like, is
contained in the half-space bounded by $p_x$, and so are $\phi(\St)$ and $\partial C(\rho)_-$.
So $p_x$ is a support plane of the closure of $\phi(\St)\cup \partial C(\rho)_-$.
So this closure is locally convex at each point of $c(\rho)$, and it is therefore
locally convex, and it is therefore a convex surface. 

It follows that 
the elements of $V$ are extreme points for the convex hull of $V$.
As the elements  of $V$ accumulates on
 $c(\rho)$, the boundary of the convex hull of $V$ is made of the union of $\phi(\St)$ and $\partial C(\rho)_-$ plus
 a circle at infinity.

Now perturb slightly the $v_1,\cdots,v_n$, this gives new points 
$v'_1,\cdots,v'_n$. Denote by $V'$ the orbits of the $v'_i$ under $\rho$
and by $\phi'(\St)$ the boundary of the convex hull of $V'$ minus $\partial C(\rho)_-$.
If the $v'_i$ are sufficiently near the $v_i$, all the $v'_i$ are extreme points
of $\phi'(\St)$. 
Moreover $\phi'(\St)$ is also disjoint from
$C(\rho)$. We saw that  $\phi(\St)$ is disjoint from
the interior of $C(\rho)$, but it is also disjoint from its boundary. Otherwise, if it touched it at a point $x$, by
convexity $\phi(\St)$ should contain the geodesic line of the boundary of $C(\rho)$ containing $x$, this is 
impossible as $\phi(\St)$ is a polyhedral surface (in particular with compact faces).
As the action of $\rho(\pi_1(S))$ is cocompact on $\phi(\St)$ and $C(\rho)$, the distance between
both is bounded from below. Hence if $v'_1,\cdots,v'_n$ are sufficiently close to $v_1,\cdots,v_n$, 
$\phi'(\St)$ is made of totally geodesic faces.
It is also space-like (with space-like support planes) as
the fact that two points are on a same space-like geodesic is
also an open condition.

Note finally that if $F$ is a face of the convex hull of $V$, it has a finite number of 
vertices in $V$ by definition of a polyhedral embedding. Moreover, in a projective model,
the boundary at infinity of the space-like plane $P$ containing $F$ 
is disjoint from the intersection with $\partial_\infty AdS_3$ of the closure of $\phi(\St)$.
It follows that all points in $V$ which are not a vertex of $F$ are outside a small
neighborhood of $P$ in the projective model of $AdS_3$.

Now consider a deformation with the $v'_i$ close enough to the $v_i$.
Given three vertices $x,y,z$ of $F$, let $x',y',z'$ be the corresponding points of $V'$, and let
$p'$ be the totally geodesic plane containing $x',y',z'$. For
a small enough deformation, any point of $V$ which is not a vertex of $F$ remains in the
past of $p'$, and so does $\partial_\infty \phi(\St)$. It follows that the face $F$ corresponds,
in the deformed polyhedral surface, to a union of faces having as vertices the vertices of $F$.
In other terms all faces of the deformed polyhedral surface remain compact, with a finite number
of vertices.
\end{proof}

\subsection{The left and right multiplications}

This part is very similar to subsections \ref{sub2.1} and \ref{sub2.2}, so we skip some details.

\subsubsection{Polar duality and multiplication}\label{subsub:dual}

The \emph{polar link} $\mathrm{Lk}(x)$ of a vertex $x$ of a convex space-like polyhedral surface $P$ in $AdS_3$ 
is the convex hyperbolic polygon in $T_xAdS_3$ given by the outward unit time-like normals of the faces of $P$ meeting at $x$. 
The \emph{exterior dihedral angle}
of an edge of $P$ with endpoint $x$ is the length of the corresponding edge of $\mathrm{Lk}(x)$. The
 interior angles of $\mathrm{Lk}(x)$ are $\pi$ minus the interior angles on the faces of $P$ at $x$. 

Let $x\in AdS_3$. We denote by $x^*$ the hyperplane orthogonal to the vector $x\in\mathbb{R}^{2,2}$ for $\langle \cdotp,\cdotp \rangle_2$. 
The intersection between $x^*$ and $AdS_3$ is space-like and has two connected components, which are antipodal. 
We also denote by $x^*$ the  intersection between the hyperplane and $AdS_3$. 
Conversely, if $H$ is a totally geodesic plane in $AdS_3$, we denote by $H^*$ 
the unique point of $AdS_3/\mathbb{Z}_2$ orthogonal as a vector to the hyperplane containing $H$.

The \emph{polar dual} $P^*$  of a convex space-like polyhedral surface $P$ is a convex polyhedral surface 
defined as the intersection of the half-spaces $\{y\in AdS_3 \vert \langle x,y\rangle_2<0 \}$
for each vertex $x$ of $P$. Faces of $P$ are contained in $x^*$. 
Actually this defines two convex polyhedral surfaces, which are identified in
$AdS_3/\mathbb{Z}_2$.
Equivalently  $P^*$ can be defined as the convex hull (in an affine chart) of the $H^*$ 
for $H$ a plane  containing a face of $P$. It follows that $(P^*)^*=P$ and
that the polar link $\mathrm{Lk}(x)$ of a vertex $x$ of $P$ is isometric to the face of $P^*$ 
dual to $x$, and vice-versa. It is easy to see that the dual of a space-like
polyhedral surface is space-like. Moreover if $P$ is a convex polyhedral surface equivariant under the action of $\rho$,
as $\rho$ acts by isometries, $P^*$ is also equivariant under the action of $\rho$.

We will identify the two models of $AdS_3$  through the following isometry:

$$AdS_3\subset (\mathbb{R}^4, \langle \cdotp,\cdotp \rangle_2)\rightarrow (SL(2,\R),-\mathrm{det}),\quad(x_1,x_2,x_3,x_4)\mapsto \begin{pmatrix}
         x_2+x_4 & x_1+x_3 \\
         x_1-x_3 & x_4-x_2
\end{pmatrix}.
$$
 The neutral element as a vector of  $\mathbb{R}^4$ is then $e=(0,0,0,1)$,
and if $y=(y_1,y_2,y_3,y_4)$ then $y^{-1}=(-y_1,-y_2,-y_3,y_4)$.

Let $a,b$ be in $AdS_3/\mathbb{Z}_2$, with $a,b,e$  mutually different, and such that 
$a^*$ and $b^*$ meet along a $2$-plane $E$ in $\mathbb{R}^{2,2}$.
Let $A$ be the following angle in $AdS_3$:
\begin{eqnarray*}
\ A&=&a^*_c\cup b^*_c\cup E \\
\ &=& \{x \in AdS_3/\mathbb{Z}_2\vert \langle x,a\rangle_2=0,\langle x,b\rangle_2 < 0 \}\cup \\
\ &&\{x \in AdS_3/\mathbb{Z}_2\vert \langle x,b\rangle_2=0,\langle x,a\rangle_2 < 0 \}\cup \{x\in AdS_3/\mathbb{Z}_2\vert \langle x,a \rangle_2=\langle x,b \rangle_2=0 \}.
\end{eqnarray*}

The \emph{left projection} of the angle $A$ is the following  map:

$$
L_A :  A\setminus E  \longrightarrow  e^*,
     x   \longmapsto 
\left\{
     \begin{matrix}
   a^{-1}x \mbox{ if } x\in a_c^*    \\
    b^{-1}x \mbox{ if } x\in b_c^*    \\
            \end{matrix}
\right. .
$$

The \textit{right projection}  of the angle $A$ is defined similarly:
$$
R_A :  A\setminus E  \longrightarrow  e^*,
     x   \longmapsto 
\left\{
     \begin{matrix}
   xa^{-1} \mbox{ if } x\in a_c^*    \\
    xb^{-1} \mbox{ if } x\in b_c^*    \\
            \end{matrix}
\right. .
$$

We assume that $R_A$ and $L_A$ have values in $AdS_3/\mathbb{Z}_2$,
so that (the quotient of) $e^*$ is isometric to the hyperbolic plane $H^2$: we consider that $L_A$ and $R_A$ have values in 
$H^2$.
By arguments very similar to the proof of Lemma~\ref{lem: proj angle}
for the spherical case, it is easy to see that the 
left (resp. right) projection sends isometrically the faces of 
the angle to two half-parts of the hyperbolic plane, in such a way that
the part on the left (resp. right) is forward and the part on the right
(resp. left) is backward.

\subsubsection{Equivariant polyhedral surfaces as flippable tilings}\label{subsub pol flip}

The left and right projections of  a space-like convex polyhedral surface $P$ of $AdS_3/\mathbb{Z}_2$
are defined similarly to the spherical case.
Let $(\phi,\rho)$ be an equivariant polyhedral embedding of $S$ with
$\rho=(\rho_r, \rho_l)$ and $\phi(\tilde{S})=P$. The action of 
$\rho(\pi_1(S))$ sends a point $x\in P$ to a point $\rho(\gamma)(x)=\rho_l(\gamma)x\rho_r(\gamma)$ of $P$. 
If $x$ belongs to a face of $P$ contained 
in a plane $a^*$, the left projection of $P$ 
sends $x$ to $a^{-1}x$ and sends $\rho(\gamma)(x)$ to $\rho_r(\gamma)^{-1}a^{-1}x\rho_r(\gamma)$.
If we identify $e^*$ with $H^2$, we obtain an action of 
$\rho_r(\pi_1 S)$ on $H^2$.
By Proposition~\ref{pr:mess}, $H^2/\rho_r(\pi_1 S)$ endows 
$S$ with a hyperbolic metric $g_l$.
We denote by $T_l(P)$ this hyperbolic surface together with the tiling given 
by the images of the faces of $P$. We paint these images 
in white and the remainder of the surface in black, and by arguments 
similar to the proof of Proposition~\ref{prop: proj sph},
we get that $T_l(P)$ is a left flippable tiling of the surface.

Similarly, the right projection of $P$ endows $S$ with a hyperbolic metric $g_r$ together with 
a right flippable tiling denoted by $T_r(P)$. 
Note that  $P$ is Fuchsian if and only if there is an isometry isotopic to the identity between
$g_r$ and $g_l$.

\subsubsection{Flippable tilings as equivariant polyhedral surfaces} 
 
Let $T_l$ be a left flippable tiling of a compact hyperbolic surface $S$.
We denote by $\widetilde{T_l}$ the universal cover of $T_l$. 
As in the spherical case, there exists a locally convex polyhedral
immersion of the white faces of $\widetilde{T_l}$
in $AdS_3$, such that the polar links of the vertices are isometric to the black faces. 
This polyhedral immersion is defined only up to global isometries. We denote by $\phi$ the choice of one immersion.
Let $\phi(x)$ belong to a face of $\phi(\widetilde{T_l})$. There is a 
unique isometry in the connected component of the identity of Isom($AdS_3$)
sending the plane containing this face to the plane containing the face containing $\phi(\gamma x)$,
and sending the vertices of the face containing $\phi(x)$ to the corresponding vertices of 
the face containing $\phi(\gamma x)$.
This provides a representation $\rho$ of $\pi_1(S)$ into $\isom_0(AdS_3)$. Because the polar link
at each vertex is isometric to a compact hyperbolic polygon, all the support planes at each vertex 
are space-like.
It follows that $(\phi,\rho)$ is an equivariant polyhedral embedding of $S$ in $AdS_3$ 
in the sense of Definition~\ref{def:eq emb}.  We call 
it the \emph{white polyhedron} of $T_l$ and denote it by $P_w(T_l)$.

\subsubsection{Proof of Theorem~H\ref{tmH:base}}

By construction, the proof is word by word the same as the one of Theorem~S\ref{tmH:base}, 
considering a closed hyperbolic surface instead of the sphere.

\subsubsection{Proof of Theorem~\ref{tm:earthquake}} \label{sssc:earthquake}

Given $h_l, h_r\in \cT$, there is by Theorem \ref{tm:mess} a unique globally hyperbolic
AdS manifold $M$ with left and right representations $\rho_l$ and $\rho_r$ 
equal to the holonomy representations of $h_l$ and of $h_r$, respectively. 

A convex polyhedral surface $\Sigma$ in $M$ determines an equivariant polyhedral surface
$\tilde{\Sigma}$ with associated representation $\rho=(\rho_l,\rho_r)$.
According to the content of Section \ref{subsub pol flip}, $\tilde{\Sigma}$ determines
in turn a left flippable tiling $T_l$ on $(S,h_l)$ such that flipping $T_l$ yields
a right flippable tiling $T_r$ on $(S,h_r)$. So the proof of Theorem \ref{tm:earthquake}
reduces to showing that $M$ contains a convex polyhedral surface. 

However the existence of such a polyhedral surface is quite easy to check. Given a
globally hyperbolic AdS manifold $M$, consider a point $x_1$ in the future of the
convex core $C(M)$. Since $C(M)$ is convex, there is a totally geodesic plane 
$P_1$ passing through $x_1$ and in the future of $C(M)$, and let $H_1$ be the 
future of $P_1$. Then repeat this procedure with a point $x_2$ which is not in 
$H_1$, then with a point $x_2$ not in $H_1\cup H_2$, etc. After a finite number
of steps, the boundary of the intersection of the complements of the $H_i$
is a closed, convex polyhedral surface in $M$.

\subsubsection{Fuchsian polyhedral surfaces and symmetric tilings}\label{subsub fuchs sym}
 
We have already noticed at the end of Subsection~\ref{subsub pol flip}
that  $P$ is Fuchsian if and only if there is an isometry isotopic to the identity
sending $T_r(P)$ to $T_l(P)$. As $T_l(P)$ is obtained by flipping $T_r(P)$, it follows that
if $P$ is Fuchsian then $T_r(P)$ is symmetric in the sense of Definition~\ref{def sym til}.
Conversely, let $T_r$ be a symmetric right flippable tiling  of a surface $(S,g_r)$.
This means that $T_r'=T_l(P_w(T_r))$ is isometric to $T_r=T_r(P_w(T_r))$, i.e.~that
$P_w(T_r)$ is Fuchsian.

\subsection{Proof of Theorem~H\ref{tm:moduli-h}} 

Let $\Gamma$ be a graph embedded in a closed surface $S$ of hyperbolic type,
such that the universal cover of $\Gamma$ is 3-connected.
We will prove that $\cT^{r,-1}_{S,n}(\Gamma)$ is non-empty. Let us denote by $\Gamma^*$ the dual graph of
$\Gamma$.
Recall Thurston's extension to hyperbolic surfaces of Koebe's circle packing theorem
(see e.g. \cite{Schpoly,Sch03}): there exists a unique hyperbolic metric $h$ on $S$ and a unique circle packing in 
$(S,h)$ such that:
\begin{itemize}
\item there is a circle for each vertex of $\Gamma^*$, and the circles bound non-intersecting open discs,
\item the circles are tangent if the corresponding vertices are joined by an edge of $\Gamma^*$,
\item faces of $\Gamma^*$ are associated to interstices (connected components of $S$ minus the open discs), 
\item and to each interstice 
is associated a circle orthogonal to all the circles corresponding to the vertices of this face.
\end{itemize}

Let $H$ be a totally geodesic space-like plane in $AdS_3$. It is isometric to
the hyperbolic plane $H^2$, and can be identified isometrically with the universal
cover of $(S,h)$. This yields a circle packing $\cC$ with incidence graph $\Gamma^*$
on the quotient of $H$ by an isometric action $\rho$ of $\pi_1(S)$ in the isometry
group of $H$. We extend $\rho$ to an isometric action on $AdS_3$.

Let $r\in (0,\pi/2)$, and let $H_r$ be the set of points at distance $r$ in 
the future of $H$. For each circle $C$ of $\cC$ corresponding to a vertex of $\Gamma^*$, consider the set $C'$ of 
points of $H_r$ which project orthogonally on $H$ to a point of $C$. $C'$ is
the intersection with $H_r$ of a totally geodesic space-like plane $H_c$,
let $P_C$ be the past of $H_c$. The intersection of the half-spaces $P_C$
for all $C\in \cC$ is a convex polyhedron in $AdS_3$, invariant under $\rho(\pi_1(S))$,
and its boundary $S_\cC$ is a space-like polyhedral surface in $AdS_3$, invariant under
$\rho$. 

This equivariant polyhedral surface $S_\cC$ has the same combinatorics as $\Gamma$. 
To see this, it suffices to note that for all circles of $H_r$ corresponding to faces around a same vertex of $\Gamma$, the corresponding planes
meet at a same point, namely the apex of the cone tangent to $H_r$ along the image of the circle orthogonal to the previous ones.
(This is a projective property. It is easy to see if one considers $H$ in the hyperbolic space, where $H_r$ is a part of the boundary at infinity. 
Then the projection from $H$ to $H_r$ preserves orthogonality, and the result follows using the polar duality.)

Through the content of Section \ref{subsub pol flip},
we obtain a flippable tiling on $(S,h)$ with incidence graph $\Gamma$, this
proves that $\cT^{r,-1}_{S,n}(\Gamma)$ is non-empty. This proves point (1).

For point (3) note that the space of flippable tilings with $n$ black faces
is homeomorphic, by Lemma~\ref{lm:topologies}, to the space of equivariant polyhedral 
surfaces with $n$ vertices. By Lemma~\ref{ref:lem variation}, this space is parameterized by the holonomy 
representation of the fundamental group of the surface in the isometry group 
of $AdS_3$ and by the positions of the vertices, so that it is homeomorphic to
a manifold of dimension $12g-12+3v$.

We now turn to the proof of part (2), and consider a fixed graph $\Gamma$.
By Lemma \ref{lm:topologies}, it is sufficient to prove that the space of
equivariant polyhedral surfaces in $AdS_3$, with combinatorics given by
$\Gamma$, is homeomorphic to a manifold of dimension $6g-6+e$.

Denote by $n_f$ the sum, over all faces
of $\Gamma$, of the number of edges minus $3$. We get
$$  n_f = 2e-3f~. $$
As the combinatorics is given, the vertices belonging to a same face have to stay in the same totally geodesic plane during the deformation. 
For each face, given three vertices of the face, the other vertices of the face have to belong to the affine plane (in the projective model of $AdS_3$)
spanned by the three vertices. 
This gives $n_f$ equations, so $\cT^{r,-1}_{S,n}(\Gamma)$ is homeomorphic to a manifold of
dimension $12g-12+3n-n_f$. Using the Euler relation $n-e+f=2-2g$ we obtain that the dimension is also equal to
$6g-6+e$. 

Finally point (4) is a direct consequence of the constructions, since the 
mapping-class group of $S$ clearly acts on the space of flippable tilings.

\subsection{Proof of Theorem~H\ref{tm:symmetric}}
\label{sec:proof sym}

The proof is an immediate adaptation of the proof of Theorem~S\ref{tm:sphere} in Subsection~\ref{sub: proofS4}, using the following result
instead of Alexandrov Theorem (Theorem~\ref{thm: alexandrov}).

\begin{thm}[{\cite{Fil09}}]\label{thm: pol fuchs}
 Let $(S,g)$ be a hyperbolic metric with conical singularities of negative curvature on a closed surface $g$ of genus $>1$. 
Then there exists a Fuchsian equivariant polyhedral embedding $(\phi,\rho)$ of $S$ such that the induced metric on $\partial \phi(\St)/\pi_1(S)$ is isometric to $(S,g)$. 
Moreover, up to global isometries, $( \phi,\rho)$
 is unique among Fuchsian equivariant polyhedral embedding.
\end{thm}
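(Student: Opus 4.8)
The plan is to follow Alexandrov's deformation method: I would realize the metric as the induced metric of a Fuchsian polyhedron by showing that the induced-metric map is a proper local homeomorphism between two manifolds of the same dimension, the target being connected and simply connected. Fix the genus $g>1$ and the number $n$ of cone points, and introduce two spaces. On the geometric side, let $\mathcal{P}_n$ be the space of Fuchsian equivariant convex space-like polyhedral embeddings $(\phi,\rho)$ of $S$ in $AdS_3$ with $n$ fundamental vertices, up to global isometry; triangulating the faces gives it an honest manifold structure across changes of combinatorics and makes the realization map smooth. By Lemma~\ref{ref:lem variation} such an embedding is determined by $\rho$ and the $n$ fundamental vertices, and small perturbations of the vertices remain in $\mathcal{P}_n$. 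A Fuchsian $\rho$ amounts to a point of $\cT$ (its $\rho_l$ and $\rho_r$ being conjugate and fixing a common space-like plane $H\cong H^2$), contributing $6g-6$ parameters, while each vertex, being a point of the $3$-dimensional $AdS_3$, contributes $3$; so $\dim\mathcal{P}_n=6g-6+3n$ (the Fuchsian analogue of the count behind Theorem~H\ref{tm:moduli-h}(3)). On the metric side, let $\mathcal{M}_n$ be the space, up to isotopy, of hyperbolic metrics on $S$ with exactly $n$ cone points of angle $>2\pi$; by Troyanov's existence and uniqueness theory for hyperbolic cone metrics this fibers over the Teichm\"uller space of the $n$-marked surface times $(2\pi,\infty)^n$, so it is contractible of dimension $6g-6+3n$. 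The realization map $I:\mathcal{P}_n\to\mathcal{M}_n$ sends $(\phi,\rho)$ to the induced metric on $\phi(\St)/\rho(\pi_1(S))$; a local computation shows this metric is hyperbolic with cone points of angle $>2\pi$ precisely at the vertices (the angle excess being forced by future-convexity), so $I$ is well defined and continuous.

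The two technical cores are the local injectivity and the properness of $I$. For local injectivity I would prove infinitesimal rigidity: an equivariant infinitesimal isometric deformation of a Fuchsian convex polyhedral surface in $AdS_3$ preserving its induced metric is the restriction of an ambient infinitesimal isometry. The cleanest route is an infinitesimal Pogorelov map, which linearly identifies equivariant isometric deformations of the polyhedron in $AdS_3$ with those of a convex polyhedron in Euclidean or Minkowski space, thereby reducing the claim to the classical Cauchy--Alexandrov rigidity of convex polyhedra; equivariance lets one run Cauchy's combinatorial sign-change lemma on the compact quotient. Since $\dim\mathcal{P}_n=\dim\mathcal{M}_n$, infinitesimal rigidity forces $dI$ to be an isomorphism, so $I$ is a local homeomorphism. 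For properness I would show that if the induced metrics stay in a compact part of $\mathcal{M}_n$ (cone angles bounded away from $2\pi$ and from infinity, cone points not colliding, injectivity radius bounded below), then the corresponding Fuchsian polyhedra cannot degenerate: using Lemma~\ref{lem: basics} and the cocompactness of the $\rho$-action together with a priori diameter bounds, one controls the pseudo-distance of the vertices from the invariant plane $H$, prevents edges from collapsing, and keeps all support planes space-like. A subsequence of the polyhedra then converges to a Fuchsian convex polyhedron realizing the limit metric.

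With these in hand the conclusion is standard. As a proper local homeomorphism, $I$ is a covering onto its image; the image is open (local homeomorphism) and closed (properness), hence all of the connected space $\mathcal{M}_n$, which gives existence. Because $\mathcal{M}_n$ is simply connected and $\mathcal{P}_n$ is connected --- any convex configuration of $n$ vertices can be deformed to any other, passing through combinatorial changes --- the covering is one-sheeted, so $I$ is a homeomorphism, which yields the uniqueness up to global isometry. I expect the infinitesimal rigidity step to be the main obstacle, both because the Lorentzian signature requires adapting the classical convexity arguments and because the deformations must be controlled equivariantly; the properness step, demanding genuine a priori geometric estimates in $AdS_3$ near the degeneration locus, is the other delicate point. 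One may alternatively route part of the argument through the polar duality of \ref{subsub:dual}, trading the induced-metric problem for the dual problem of prescribing dihedral angles, but the rigidity and compactness difficulties then reappear in dual form.
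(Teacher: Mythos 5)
The paper does not prove this statement at all: Theorem~\ref{thm: pol fuchs} is imported wholesale from \cite{Fil09}, where the proof follows exactly the strategy you outline --- an Alexandrov-type invariance-of-domain argument between the $(6g-6+3n)$-dimensional space of Fuchsian polyhedral embeddings and the Troyanov space of hyperbolic cone metrics with angles $>2\pi$, with local injectivity obtained from equivariant infinitesimal rigidity via an infinitesimal Pogorelov map to a flat Lorentzian model and properness from a priori degeneration estimates. Your sketch is therefore the correct approach and matches the cited source; the two steps you flag as the technical cores (rigidity and properness) are indeed where the real work lies in \cite{Fil09}, and your outline leaves them as announced claims rather than proofs, which is the only caveat.
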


\section{Alexandrov and Minkowski type results in AdS}
\label{sc:alexandrov}

In this section, we will prove a AdS analogue of a standard result of Euclidean geometry, known as 
Alexandrov curvature theorem, see for example \cite[9.1]{Ale05} or \cite{Pakbook}. Roughly speaking, the problem
is to prescribe geodesics containing vertices of a convex polyhedron together with the singular curvature at the vertices. 
It will appear that it is equivalent to an analogue of the Minkowski theorem for Euclidean convex polytopes.

\subsection{Two equivalent statements}

Let $(\phi,\rho)$ be a Fuchsian equivariant polyhedral embedding of a surface $S$ in $AdS_3$,
and let $G=\rho(\pi_1(S))$. 
By definition $G$ fixes a totally geodesic space-like plane $H$ in $AdS_3$, and it follows from the proof of Lemma~\ref{ref:lem variation} that
$\phi(\St)$ does not meet $H$.
Without loss of generality, suppose that $\phi(\St)$ is in the future of $H$.
 $G$ also fixes the dual $H^*$ of $H$ 
($H^*$ is chosen to be in the future of $H$). The boundary at infinity $\partial H$ of $H$ is also the boundary at infinity
of $\phi(\St)$, and all light-like geodesics passing through a point of $\partial H$ also pass through $H^*$.
By Lemma~\ref{lem: basics} it follows that $\phi(\St)$ is contained in the past cone of $H^*$, and that each geodesic orthogonal
to $H$ (i.e.~with endpoint $H^*$) meets $\phi(\St)$ exactly once.

Let $GR$ be a set of  future-directed times-like segments of length $\pi/2$ orthogonal to $H$, 
 invariant under the action of $G$. The elements of $GR$ are called \emph{half-rays}.
 We denote by $R=(r_1,\cdots,r_n), n\geq 1$ a subset of $GR$ in a fundamental domain for the action of $G$.
Let
$\mathcal{P}(G, R)$ be the set, up to global isometries,  of  
Fuchsian polyhedral surfaces invariant under the action of $G$ 
and such that the vertices  are on $GR$. 
Recall that the 
(singular) curvature of a vertex  is $2\pi$ minus the sum of the face angles around this vertex.

\begin{thm}\label{thm: presc curvADS}
  If $k_1,\cdots,k_n$ are negative numbers such that
\begin{equation}\label{eq: GB}\sum_{i=1}^n k_i>2\pi \chi(g) \end{equation}
then there exists a unique Fuchsian polyhedral surface in $\mathcal{P}(G, R)$
such that the vertex  on $r_i$ has curvature $k_i$.
\end{thm}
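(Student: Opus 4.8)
The plan is to recast Theorem~\ref{thm: presc curvADS} as the statement that a single \emph{curvature map} is a homeomorphism onto the prescribed region. By Lemma~\ref{ref:lem variation}, every surface in $\mathcal{P}(G, R)$ is the boundary of the convex hull of the $G$-orbit of $n$ vertices, one on each half-ray $r_i$, so it is determined by the $n$ distances $t_i\in(0,\pi/2)$ of the vertices from $H$ along $r_i$. Let $U\subset(0,\pi/2)^n$ be the open set of $t=(t_1,\dots,t_n)$ for which all $n$ points remain extreme points of the convex hull; Lemma~\ref{ref:lem variation} guarantees $U$ is open. First I would define the curvature map $\kappa:U\to\mathbb{R}^n$, $t\mapsto(k_1,\dots,k_n)$, recording the singular curvatures at the fundamental vertices, and check that it is continuous and in fact $C^1$, since the convex hull, its face angles, and hence the angle defects depend differentiably on the vertex positions, even across the walls of $U$ where the combinatorics jumps.

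Next I would identify the target. The image of $\kappa$ lies in $\mathcal{A}=\{k\in\mathbb{R}^n : k_i<0 \text{ for all } i,\ \sum_i k_i>2\pi\chi(g)\}$, and this is exactly what must be realized. The sign $k_i<0$ reflects Lorentzian convexity: a convex space-like polyhedral surface in $AdS_3$ has cone angles larger than $2\pi$, as already observed for the black metric of a flippable tiling. The global bound is Gauss--Bonnet for the induced hyperbolic cone metric on $\phi(\St)/G$: since each face is totally geodesic space-like, hence locally isometric to $H^2$, one gets $\sum_i k_i=2\pi\chi(g)+\mathrm{Area}$ with positive area. This also reads off the two boundary strata of the convex region $\mathcal{A}$: letting $k_i\to 0$ means the $i$-th vertex ceases to be extreme (it flattens), while $\sum_i k_i\to 2\pi\chi(g)$ means the surface collapses onto $H$.

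To make the problem tractable I would pass to the polar dual of Section~\ref{subsub:dual}. All half-rays in $GR$ are orthogonal to $H$ of length $\pi/2$, so they share the endpoint $H^*$; a vertex at distance $t_i$ is $x_i=\cos(t_i)\,p_i+\sin(t_i)\,H^*$ with $p_i\in H$, and its dual plane $x_i^*$ pivots, as $t_i$ varies, about the fixed line $p_i^*\cap H\subset H$. The polar dual $P^*$ is again a Fuchsian convex space-like polyhedral surface in which $x_i$ becomes a face carried by $x_i^*$; a standard computation with the polar link (interior angles $\pi$ minus the face angles of $P$ at $x_i$, edge lengths the exterior dihedral angles) together with the hyperbolic area formula shows that the area of this dual face equals $-k_i$. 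Thus the theorem is \emph{equivalent} to a Minkowski-type statement: among Fuchsian convex polyhedral surfaces whose faces are carried by the prescribed pencils of planes through the lines $p_i^*\cap H$, there is a unique one whose $i$-th face has area $-k_i$.

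Finally I would prove that $\kappa:U\to\mathcal{A}$ is a homeomorphism by the continuity method supported by a variational structure. For injectivity the key is infinitesimal rigidity: the differential $d\kappa$ is everywhere non-degenerate. I would obtain this by exhibiting $\kappa$ as (minus) the gradient of a concave functional, the natural candidate being a covolume of the region between $P^*$ and $H$, whose first variation in the support parameters $h_i$ (monotone functions of the $t_i$) returns the dual face areas by a Schläfli-type formula, and whose Hessian is negative definite. This definiteness is the Lorentzian analogue of Alexandrov's infinitesimal rigidity of convex polyhedra, and establishing it — controlling the second variation of the covolume across changes of combinatorics — is the step I expect to be the main obstacle. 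Granting it, $\kappa$ is a local diffeomorphism and, as the gradient of a strictly concave function, globally injective. For surjectivity I would prove $\kappa$ is proper onto $\mathcal{A}$: using the boundary analysis above, any sequence in $U$ leaving every compact set maps to a sequence leaving every compact set of $\mathcal{A}$ (some $k_i\to 0$, or $\sum_i k_i\to 2\pi\chi(g)$, or vertices escaping toward $H^*$, which I would likewise show drives the image to $\partial\mathcal{A}$). Since $\mathcal{A}$ is convex, a proper injective local homeomorphism into it has image open and closed, hence equal to $\mathcal{A}$, so $\kappa$ is a homeomorphism, giving both existence and uniqueness.
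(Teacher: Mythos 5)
Your global strategy coincides with the paper's: parameterize $\mathcal{P}(G,R)$ by the heights of the vertices along the half-rays (Lemma~\ref{ref:lem variation}), send a surface to its curvatures, identify the target as the convex open set $\mathcal{K}(n)=\{k\in(\mathbb{R}_-)^n\ \vert\ \sum_i k_i>2\pi\chi(S)\}$, and conclude by a continuity method (proper plus locally injective plus connected source plus simply connected target, Lemma~\ref{lem: cont meth}). Your properness sketch and the duality with the Minkowski-type statement (Lemma~\ref{lem: eq stat}) also match the paper. The genuine gap sits exactly where you anticipate it: local injectivity, i.e.\ non-degeneracy of the differential. You propose to realize the curvature map as (minus) the gradient of a strictly concave covolume functional via a Schl\"afli-type formula, with a definite Hessian. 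But the paper's explicit computation of the Jacobian $a_{xy}=\partial\omega_x/\partial h_y$ shows that in general $a_{xy}\neq a_{yx}$ (because $\rho_{s_jx}\neq\rho_{xs_j}$), and the authors remark that this indicates there is \emph{no} functional on the heights whose gradient is the curvature map, and that it is unclear whether a Minkowski-style variational proof can be carried out in the AdS setting at all. Since the dual face-area map differs from the curvature map only by affine reparameterizations, the same asymmetry obstructs your covolume Hessian. Without that symmetric definite form you get neither the local diffeomorphism property nor the global injectivity you deduce from strict concavity. The paper instead proves invertibility of the Jacobian by a direct estimate: using hyperbolic--de Sitter trigonometry and the convexity criterion of Lemma~\ref{lem:ADSconvexite}, it shows the off-diagonal entries are non-positive, the diagonal entries are positive, and $|a_{xx}|>\sum_{y\neq x}|a_{yx}|$, i.e.\ the matrix is strictly diagonally dominant, hence invertible.

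A secondary issue: you assert the curvature map is $C^1$ ``even across the walls where the combinatorics jumps.'' The paper explicitly doubts this and circumvents it by fixing a $G$-invariant triangulation of the faces (introducing ``false edges''), proving that the triangulated map $C_{\Gamma}$ is $C^1$ with non-degenerate Jacobian at the given surface, and deducing local injectivity of $C$ from that of $C_{\Gamma}$. You would need a similar device rather than a bare smoothness claim.
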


\begin{thm}\label{thm: minkADS}
 Under the assumptions of Theorem~\ref{thm: presc curvADS}, there exists a unique
 Fuchsian polyhedral surface with faces contained in planes orthogonal to $G R$, such that  the face in the 
plane orthogonal to $r_i$ has area  $-k_i$. 
\end{thm}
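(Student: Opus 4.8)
The plan is to deduce Theorem~\ref{thm: minkADS} directly from Theorem~\ref{thm: presc curvADS} by means of the polar duality in $AdS_3$ introduced in Subsection~\ref{subsub:dual}. Given negative numbers $k_1,\ldots,k_n$ satisfying the assumptions of Theorem~\ref{thm: presc curvADS}, that theorem furnishes a unique Fuchsian polyhedral surface $P\in\mathcal{P}(G,R)$ whose vertex on the half-ray $r_i$ has singular curvature $k_i$. First I would take the polar dual $P^*$ and check that it is exactly the surface asserted to exist in Theorem~\ref{thm: minkADS}.

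All the verifications rest on properties of polar duality already recorded in Subsection~\ref{subsub:dual}. Since $P$ is a space-like convex polyhedral surface invariant under $G=\rho(\pi_1(S))$ and $\rho$ acts by isometries, its dual $P^*$ is again a space-like convex polyhedral surface invariant under the same $\rho$; because $\rho$ globally fixes the space-like plane $H$, the surface $P^*$ is again Fuchsian. Next, each vertex $v$ of $P$ lies on some half-ray $r_i$, that is, on a time-like geodesic $\gamma_i$ orthogonal to $H$, and the face of $P^*$ dual to $v$ is contained in the polar plane $v^*$. Since $\gamma_i$ is a time-like geodesic through the point $v=(v^*)^*$, it meets $v^*$ orthogonally; hence this face lies in a plane orthogonal to $\gamma_i$, and therefore to $r_i$. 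As every face of $P^*$ is dual to a vertex of $P$, the faces of $P^*$ are precisely contained in planes orthogonal to $GR$, with one face for each half-ray in a fundamental domain.

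It remains to compute the areas. By polar duality the face of $P^*$ dual to $v$ is isometric to the polar link $\mathrm{Lk}(v)$, a convex hyperbolic polygon whose interior angles are $\pi$ minus the face angles of $P$ at $v$. Writing $\alpha_1,\ldots,\alpha_m$ for these face angles, the Gauss--Bonnet formula for a hyperbolic $m$-gon gives $\mathrm{area}(\mathrm{Lk}(v))=(m-2)\pi-\sum_j(\pi-\alpha_j)=-2\pi+\sum_j\alpha_j=-k_i$, the last equality being the definition of the curvature $k_i=2\pi-\sum_j\alpha_j$ at $v$. Thus the face of $P^*$ orthogonal to $r_i$ has area $-k_i$, which establishes existence.

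For uniqueness I would run the correspondence backwards. If $Q$ is any Fuchsian polyhedral surface with faces contained in planes orthogonal to $GR$ and with the face orthogonal to $r_i$ of area $-k_i$, then each such face plane $\Pi$, being orthogonal to $\gamma_i$, has its dual point $\Pi^*$ on $\gamma_i$; hence the vertices of $Q^*$ lie on $GR$ and $Q^*\in\mathcal{P}(G,R)$. The same area computation, read in reverse, shows that the vertex of $Q^*$ on $r_i$ has curvature $k_i$, so the uniqueness part of Theorem~\ref{thm: presc curvADS} forces $Q^*=P$ and hence $Q=(Q^*)^*=P^*$. I expect the only delicate point to be the bookkeeping that polar duality exchanges the two constraint classes exactly---``vertices on $GR$'' against ``faces in planes orthogonal to $GR$'', with the positions along the half-rays playing the role of dual free parameters---and that the dual configuration falls in the admissible region, where the normalization of the half-rays to length $\pi/2$ is used; once this dictionary is secured the theorem follows at once.
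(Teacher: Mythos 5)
Your proposal is correct and is essentially the paper's own argument: the paper proves Theorem~\ref{thm: minkADS} by establishing its equivalence with Theorem~\ref{thm: presc curvADS} via polar duality (Lemma~\ref{lem: eq stat}), taking $P^*$, noting that each face plane $x^*$ is orthogonal to the geodesic through the vertex $x$ (the paper adds that it actually meets the half-ray $\tilde{r}_i$ reflected through $H$), computing the face areas by Gauss--Bonnet on the polar link, and reversing the duality for uniqueness. Your Gauss--Bonnet computation $(m-2)\pi-\sum_j(\pi-\alpha_j)=-k_i$ just makes explicit a step the paper leaves implicit.
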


By duality the two statements are equivalent:
\begin{lemma}\label{lem: eq stat}
 If $P$ is a Fuchsian polyhedral surface given by one of the two theorems above, then the dual $P^*$ of $P$ satisfies the other statement. 
\end{lemma}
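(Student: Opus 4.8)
The plan is to establish the equivalence between Theorem~\ref{thm: presc curvADS} and Theorem~\ref{thm: minkADS} by exploiting the polar duality introduced in Subsection~\ref{subsub:dual}, which sends a Fuchsian equivariant polyhedral surface $P$ to its dual $P^*$, interchanging vertices with faces and carrying the singular curvature at a vertex to the area of the dual face. First I would set up the correspondence on the level of the defining data: a vertex of $P$ lying on the half-ray $r_i$ (a future-directed time-like segment of length $\pi/2$ orthogonal to $H$, with endpoint at $H^*$) corresponds under duality to a totally geodesic plane, and I claim this plane is precisely a plane orthogonal to $r_i$. The key geometric fact is that $x^*$, the dual of a point $x$, is the hyperplane $\langle x, \cdot\rangle_2 = 0$, so the plane dual to a vertex $v$ on $r_i$ is orthogonal to the vector $v$; since all points of $r_i$ are positive multiples (in the quadric sense) determining the same orthogonality direction as we move along the ray, the dual planes of all candidate vertices on $r_i$ are exactly the planes orthogonal to $r_i$. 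This shows that ``vertices on $GR$'' dualizes to ``faces in planes orthogonal to $GR$.''

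Next I would verify that the two numerical quantities match up under duality. By the discussion following the definition of polar dual in Subsection~\ref{subsub:dual}, the polar link $\mathrm{Lk}(x)$ of a vertex $x$ of $P$ is isometric to the face of $P^*$ dual to $x$. The singular curvature $k_i$ at the vertex on $r_i$ is $2\pi$ minus the sum of the face angles around that vertex, and I must relate this to the area of the dual face, which is a convex hyperbolic polygon. The standard computation is that for a convex hyperbolic polygon the area equals $(m-2)\pi$ minus the sum of its interior angles, where $m$ is the number of sides; combining this with the relation (recorded in Subsection~\ref{subsub:dual}) that the interior angles of $\mathrm{Lk}(x)$ are $\pi$ minus the interior face angles of $P$ at $x$, and that the edge lengths of $\mathrm{Lk}(x)$ are the exterior dihedral angles, a direct angle-bookkeeping computation yields that the area of the dual face equals $-k_i$ exactly when the vertex curvature is $k_i$. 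This is the Gauss--Bonnet-type identity underlying the duality, and it is the step where I must be careful about signs, since in the AdS setting the polar links are hyperbolic rather than spherical polygons.

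The main obstacle I anticipate is not the angle computation itself but checking that the duality $P \mapsto P^*$ respects the global equivariance and the Fuchsian structure: I need $P^*$ to again be a Fuchsian equivariant polyhedral surface invariant under the same group $G$, with its faces arranged so that the data of Theorem~\ref{thm: minkADS} is genuinely realized. The equivariance is handled by the remark in Subsection~\ref{subsub:dual} that if $P$ is equivariant under $\rho$ then so is $P^*$, since $\rho$ acts by isometries and isometries commute with polar duality. The Fuchsian property is preserved because $G$ fixes $H$ and hence fixes $H^*$, and the dual construction is built from the same fixed totally geodesic plane. Once these structural points are confirmed, the equivalence is a clean translation: a surface satisfying the hypotheses of one theorem dualizes to one satisfying the hypotheses of the other, and since $(P^*)^* = P$, the correspondence is a bijection, so existence and uniqueness in one statement transfer verbatim to the other. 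The Gauss--Bonnet inequality \eqref{eq: GB} is the same condition on both sides, as it is simply the total curvature (equivalently, minus the total area) being bounded by $2\pi\chi(g)$.
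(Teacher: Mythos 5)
Your proposal is correct and follows essentially the same route as the paper: dualize vertex-on-$r_i$ to face-in-plane-orthogonal-to-$r_i$, use the polar link identification plus Gauss--Bonnet to turn curvature $k_i$ into area $-k_i$, and invoke $(P^*)^*=P$ to transfer existence and uniqueness both ways. The only detail the paper adds is that $x^*$ is orthogonal to the half-ray $\tilde{r}_i$ obtained from $r_i$ by reflection in $H$ (rather than to $r_i$ itself), a bookkeeping point that does not affect the equivalence since $\tilde{R}$ is an equally valid $G$-invariant family of half-rays.
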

\begin{proof}
Let $P$ be the unique Fuchsian polyhedral surface satisfying Theorem~\ref{thm: presc curvADS} for the data $(r_1,\cdots,r_n)$ and $(k_1,\cdots,k_n)$.
Let $x$ be a vertex of $P$ on the half-ray $r_i$. 
By definition of the polar dual, the plane $x^*$ is orthogonal to each time-like geodesic passing through $x$. In particular, the geodesic containing
$r_i$ is orthogonal to $x^*$. More precisely, $x^*$ is orthogonal to a half-ray $\tilde{r}_i$ obtained from $r_i$
by a symmetry with respect to $H$.
By definition, $P^*$ is the intersection of all the $x_i^*$ for each vertex $x_i$ of $P$, 
and by the Gauss--Bonnet Formula and by definition of the polar link, the area of the corresponding face of $P^*$ is  $-k_i$.
Hence $P^*$ satisfies the existence part of Theorem~\ref{thm: minkADS} for the data  $(\tilde{r}_1,\cdots,\tilde{r}_n)$ and $(k_1,\cdots,k_n)$.

The reversed direction for the existence part is done in the same way, and this implies the equivalence of
the uniqueness parts.
 
\end{proof}

Note also that condition (\ref{eq: GB}) is necessary in Theorem~\ref{thm: presc curvADS} because of  Gauss--Bonnet formula, and by the preceding lemma it is also necessary in Theorem~\ref{thm: minkADS}. 
Conversely to the Euclidean Alexandrov curvature theorem, there is no other condition on the curvatures.
Actually Theorem~\ref{thm: presc curvADS} can be though as the AdS analog of the Alexandrov curvature theorem for convex caps, in which Gauss--Bonnet is the only condition on the curvatures \cite{Pakbook}.

The analog of Theorem~\ref{thm: presc curvADS} for Fuchsian polyhedral surfaces in Minkowski space was proved in \cite{Isk00}. 
It was extended to general Fuchsian surfaces in this space, and to higher dimensions, in \cite{Ber10}.

\subsection{Proof of Theorem~H\ref{thm: paramADS} from Theorem~\ref{thm: presc curvADS}}

Let $T_r$ be a symmetric flippable tiling of $\cbT^{r,-1}_{S,h,n}(K)$.
From Subsubsection~\ref{subsub fuchs sym} we can choose a Fuchsian polyhedral surface $P$ constructed from $T_r$, such that
the singular curvatures of $P$  (i.e.~ minus the area of the polar links of its vertices) are equal to the area of the black faces of $T_r$.
This gives us a group $G \subset \isom(AdS_3)$ fixing a totally geodesic space-like plane $H$. 
Hence $\cbT^{r,-1}_{S,h,n}(K)$ is in bijection with $\mathcal{P}(G, R)$.
Theorem~\ref{thm: presc curvADS} says that $\mathcal{P}(G, R)$ is parameterized by  the orthogonal projection
of the vertices onto $H$, which is the same as the configuration of $n$ points on $H/G \simeq S$. This is exactly
the content of Theorem~\ref{thm: presc curvADS}.

\subsection{Proof of Theorem~\ref{thm: presc curvADS}}

The proof uses a classical continuity method.
We recall here a basic topology theorem at the heart of this continuity method.

\begin{lemma}\label{lem: cont meth}
 Let $A, B$ be two manifolds of the same finite dimension with a continuous map $I : A \rightarrow B$. If
\begin{enumerate}
\item $A$ is connected, 
\item $B$ is connected and simply connected, 
\item $I$ is locally injective,
\item $I$ is proper,
\end{enumerate}
then $I$ is a homeomorphism.
\end{lemma}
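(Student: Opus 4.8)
The plan is to show that the four hypotheses force $I$ to be a covering map, and then to use the simple connectivity of $B$ to conclude that this covering has a single sheet, hence is a homeomorphism.

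First I would promote local injectivity to local homeomorphy. Since $A$ and $B$ are manifolds of the same dimension $n$ and $I$ is continuous and locally injective, reading $I$ in charts produces, around each point of $A$, a continuous injective map between open subsets of $\R^n$; by Brouwer's invariance of domain such a map is an open embedding. Hence $I$ is a local homeomorphism, and in particular an open map.

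Next I would exploit properness. A proper continuous map into a locally compact Hausdorff space is closed, so $I(A)$ is closed in $B$; it is also open by the previous step. As $B$ is connected and $A$ is non-empty, $I(A)=B$, so $I$ is surjective. The heart of the argument is then to check that $I$ is a covering map. Fix $b\in B$: the fibre $I^{-1}(b)$ is discrete (because $I$ is a local homeomorphism) and compact (because $I$ is proper), hence finite, say $I^{-1}(b)=\{a_1,\dots,a_k\}$. Choosing pairwise disjoint open neighbourhoods $U_i\ni a_i$ on which $I$ restricts to a homeomorphism onto an open set $V_i$, I would set
\[ V:=\Bigl(\bigcap_{i=1}^k V_i\Bigr)\setminus I\Bigl(A\setminus\bigcup_{i=1}^k U_i\Bigr)~. \]
Since $I$ is closed, $I(A\setminus\bigcup_i U_i)$ is closed and does not contain $b$, so $V$ is an open neighbourhood of $b$. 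By construction $I^{-1}(V)\subset\bigcup_i U_i$, whence $I^{-1}(V)=\bigsqcup_{i}\bigl(U_i\cap I^{-1}(V)\bigr)$ with each piece carried homeomorphically onto $V$ by $I$. Thus $V$ is evenly covered, and $I$ is a covering map.

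Finally, $B$ is simply connected and locally path-connected (being a manifold), while $A$ is connected, so the connected covering $I$ has exactly one sheet and is therefore a homeomorphism. The step I expect to be the main obstacle is the verification that $I$ is a covering map: it is precisely here that properness is indispensable, both to force the fibres to be finite and to make $V$ an honest open neighbourhood of $b$. Without properness a locally injective open map need not be a covering, so this is where the argument genuinely uses hypothesis~(4).
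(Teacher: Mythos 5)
Your proof is correct and follows the same route as the paper: invariance of domain upgrades local injectivity to local homeomorphy, and then properness together with the simple connectivity of $B$ forces $I$ to be a one-sheeted covering, hence a homeomorphism. The only difference is that the paper outsources the second step to a citation (a proper local homeomorphism onto a connected, simply connected Hausdorff space is a global homeomorphism), whereas you prove it in full via the standard evenly-covered-neighbourhood construction; your details are sound, including the use of closedness of the proper map $I$ to shrink $V$ so that $I^{-1}(V)\subset\bigcup_i U_i$.
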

\begin{proof}
 As $I$ is continuous and locally injective, it is a local homeomorphism by the invariance of domain theorem. 
The conclusion follows because a local homeomorphism between a pathwise connected Hausdorff space and a simply connected Hausdorff space is a global homeomorphism if and only if the map is proper (see for example \cite{Ho75}).
 \end{proof}

 Let $P\in\mathcal{P}(G, R)$. The  \emph{height} $h_i$ of $P$ is  
the distance from its  vertex on $r_i$ to the hyperbolic plane $H$. 
We know from Lemma~\ref{ref:lem variation} that an element of $\mathcal{P}(G, R)$ is determined by its  heights 
$(h_1,\cdots,h_n)$,  and that $\mathcal{P}(G, R)$ is a manifold of dimension $n$. 
We need a slightly more here. The matter is simplified because in the Fuchsian case the convex core has empty interior.
\begin{lemma}
Let $(h_1,\cdots,h_n)$, $0<h_1<\pi/2$. If the orbits for $G$ 
of the points on the  segments $r_i$ at distance $h_i$ from $H$
are in convex position, then the convex hull of these orbits has two connected components, 
one is $H$ and the other belongs to  $\mathcal{P}(G, R)$.

Moreover $\mathcal{P}(G, R)$ is contractible.
\end{lemma}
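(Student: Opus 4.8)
The plan is to treat the two assertions separately. For the first, I would reuse the description of the convex hull of a $G$-orbit established in the proof of Lemma~\ref{ref:lem variation}, specialized to the Fuchsian case; for the contractibility statement I would pass to the projective model and reduce it to the convexity of an explicit constraint region. Throughout, write $v_i$ for the point of $r_i$ at distance $h_i$ from $H$ and $V=G\cdot\{v_1,\dots,v_n\}$.

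Since $0<h_i<\pi/2$ for each $i$, every $v_i$ lies strictly in the future of $H$ and strictly in the past cone of $H^*$, hence so does the whole orbit $V$. Because $G$ acts cocompactly on $H$, the orbit $V$ accumulates exactly on the limit set of $G$, which is $\partial H=\partial_\infty H\subset\partial_\infty AdS_3$. In the Fuchsian situation the convex core $C(\rho)$ degenerates to the plane $H$, so the argument in the proof of Lemma~\ref{ref:lem variation} applies with $\partial C(\rho)_-=H$: it shows that $\overline{CH(V)}$ meets $\partial_\infty AdS_3$ exactly along $\partial H$ and that $\partial CH(V)$ splits into two connected components, namely the plane $H$ (the past component, i.e.\ the degenerate convex core) and a future component $P$. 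Since $\partial H$ is weakly space-like for the conformal Lorentz structure on $\partial_\infty AdS_3$, the light-cone argument of Lemma~\ref{ref:lem variation} (through Lemma~\ref{lem: basics}) shows that every support plane of $P$ is space-like, so $P$ is a convex space-like polyhedral surface invariant under $G$. The convex position hypothesis guarantees that no $v_i$ lies in $CH$ of the remaining vertices, hence each $v_i$, and so each half-ray $r_i$, carries exactly one genuine vertex of $P$; therefore $P\in\mathcal{P}(G,R)$. Conversely each element of $\mathcal{P}(G,R)$ arises in this way, so this first part identifies $\mathcal{P}(G,R)$ homeomorphically with the set $U\subset(0,\pi/2)^n$ of height vectors whose orbits are in convex position.

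For contractibility I would work in the projective model, taking $H^*=e$ as the common apex of all half-rays and using the Klein disk model of $H$ in the affine chart $\{x_3=1\}$. A direct computation gives $g\cdot v_i=\cos(h_i)\,(g p_i)+\sin(h_i)\,e$ (using $g\cdot e=e$), whose image in this chart lies over the fixed horizontal point $\pi_K(g p_i)\in H$ at height $s_{g,i}=\tan(h_i)/(g p_i)_3$. Thus the horizontal positions are independent of the heights, and each vertex height $s_{g,i}$ is an affine (indeed linear) function of the single parameter $\tan(h_i)$. Since $V$ accumulates on $\partial H$ at height $0$, the past component of $\partial CH(V)$ is the disk $H=\{y_4=0\}$ and the future component $P$ is the graph over the Klein disk of the concave (upper) envelope of the points $(\pi_K(g p_i),\,s_{g,i})$.

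In these coordinates convex position becomes an explicit family of inequalities: a vertex is extreme for the upper hull exactly when its height exceeds the value, at its horizontal position, of the concave envelope of all the other vertices. For fixed horizontal data this envelope value is a supremum of functions linear in the heights, hence convex in the height vector, while the height of the vertex under consideration is itself linear; so each inequality defines a region on which a concave function of the heights is positive, i.e.\ a convex set. Since the heights $s_{g,i}$ are affine in $(\tan h_1,\dots,\tan h_n)$, these pull back to convex conditions on $(\tan h_1,\dots,\tan h_n)\in(0,\infty)^n$, and the image of $U$ is the intersection of these convex sets, hence convex; as $h_i\mapsto\tan h_i$ is a homeomorphism of $(0,\pi/2)$ with $(0,\infty)$, the set $U\cong\mathcal{P}(G,R)$ is contractible. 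The main obstacle is precisely this contractibility step, and within it the point demanding the most care is justifying the reduction to convex inequalities: one must check that the future boundary component is genuinely a graph over $H$, that the vertex heights are affine in the parameters, and that the infinitely many constraints coming from the full $G$-orbit (accumulating on $\partial H$) still cut out convex regions. Once these are secured, convexity of $U$ — and hence contractibility — is immediate.
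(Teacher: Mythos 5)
Your argument for the first assertion has a genuine gap: you do not establish that the future component of $\partial CH(V)$ is actually a \emph{polyhedral} surface. Appealing to the proof of Lemma~\ref{ref:lem variation} is not legitimate here, because that proof starts from a polyhedral surface $\phi(\St)$ that is already given and shows that the convex hull of its vertices recovers it; the only surface it constructs from scratch is a small perturbation of an existing one. In the present lemma the surface must be built from the bare orbit data, and the point requiring proof is precisely local finiteness: that every face is a compact totally geodesic polygon with finitely many vertices, that finitely many faces meet at each vertex, and that faces do not accumulate. Convex position only says each orbit point is extreme; it does not by itself exclude a support plane $\Pi$ containing infinitely many orbit points (which would accumulate at the at most two points of $\partial_\infty\Pi\cap\partial H$), hence a non-compact face. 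The paper's proof supplies exactly this missing step: it first treats a single orbit, where all vertices lie on a constant-distance surface from $H$, and obtains the polyhedral structure by passing to the dual surface $Q$, whose $1$-skeleton projects to a Dirichlet tessellation of $H$; cocompactness of $G$ makes the Dirichlet cells compact polygons, so $Q$, and hence its dual $P$, is polyhedral, and the several-orbit case is reduced to this one. Your claim that all support planes are space-like is also under-argued: the light-cone computation at points of $\partial H$ in Lemma~\ref{ref:lem variation} yields local convexity of the closure at infinity, not space-likeness of support planes at finite points, which the paper derives separately from cocompactness (as for Fuchsian surfaces in Minkowski space). Since your contractibility step relies on identifying $\mathcal{P}(G,R)$ with the convex-position set $U$, this gap propagates.

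By contrast, your treatment of the contractibility assertion is sound and in fact more explicit than the paper's, which simply asserts that $\mathcal{P}(G,R)$ is homeomorphic to a convex subset of $\mathbb{R}^n$ with a pointer to \cite{Fil09}. The computation $g\cdot v_i=\cos(h_i)(gp_i)+\sin(h_i)e$ is correct, the vertical heights in the affine chart are indeed linear in $\tan h_i$ with horizontal positions independent of the heights, and each extremality condition is the positivity of a concave function of $(\tan h_1,\dots,\tan h_n)$ (a supremum over the infinite orbit causes no trouble, being still a supremum of linear functions). This gives convexity of the parameter region and hence contractibility, once the first part is repaired.
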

\begin{proof}
Let $P$ be the convex hull of the orbits less $H$. By construction $P$ in invariant under the action of $G$
and has its vertices on $GR$. To prove that it belongs to $\mathcal{P}(G, R)$, it remains to check that it is a space-like
polyhedral surface. 

Let us consider the case when $P$ is made only of the orbit of one vertex.
All the vertices of $P$ belongs to a space-like surface at constant distance from $H$. By 
Lemma~\ref{lem: basics}, this surface meets at most once every time-like and light-like geodesic
hence $P$ is space-like. The fact that the support planes are space-like comes from the 
cocompactness of the action of $G$ on $H$, the argument is the same as for Fuchsian surfaces in the Minkowski space \cite{FF}.
Now consider the surface $Q$ which is the boundary of the intersection of half-spaces bounded by 
planes orthogonal to the half-rays at the vertices of $P$. It is not hard to see that 
the orthogonal projection of the 1-skeleton of $Q$ onto $H$ is a Dirichlet tessellation of 
$H$ for the group $G$, hence $Q$ is a polyhedral surface as $G$ acts cocompactly on $H$.
The dual of $Q$ is a polyhedral surface, equal to $P$, up to translate the vertices along the $r_i$.

The case with more one orbit follows as in this case the surface can be seen as the boundary of 
the intersection of the convex hull of each orbit.

Hence $\mathcal{P}(G, R)$ can be identified with the set of heights 
such that the vertices are in convex position, and it is not hard
to see that it is homeomorphic to a convex subset of $\mathbb{R}^n$ (see \cite{Fil09}
where a very similar argument is used).
\end{proof}

Let us define  $\mathcal{K}(n)=\{(k_1,\cdots,k_n)\in (\mathbb{R}_-)^n \vert \sum k_i > 2\pi \chi(S) \}$. 
It is obviously a non-empty contractible (convex) open subset of $\mathbb{R}^n$. 

We have a natural map which to each Fuchsian polyhedral surface associates its singular curvatures: 
$$\begin{array}{lrcl}
C : & \mathcal{P}(G,R) & \longrightarrow & \mathcal{K}(n)\\
    & (h_1,\cdots,h_n) & \longmapsto & (k_1,\cdots,k_n)\end{array}.$$

Theorem~\ref{thm: presc curvADS} says exactly that $C$ is a bijective map. 
We will prove that $C$ 
is  proper (Lemma~\ref{lem: propr}) and locally injective (Lemma~\ref{lem : jac}). Then by Lemma~\ref{lem: cont meth}, 
$C$  is a homeomorphism, in particular a bijective map. 

\begin{lemma}\label{lem: propr}
The map $C$ is proper: 
Let $(K_i)_i$ be a converging sequence of  $\mathcal{K}(n)$ such that for all $i$, 
there exists $P_i\in \mathcal{P}(G,n)$ with $K_i=C(P_i)$. 
Then a subsequence of $(P_i)_i$ converges in $\mathcal{P}(G,n)$.

 \end{lemma}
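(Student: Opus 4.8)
The plan is to prove properness by the usual boundary analysis: parameterise $\mathcal{P}(G,R)$ by the heights and show that a sequence whose curvatures stay in a fixed compact subset of $\mathcal{K}(n)$ cannot run off to the boundary of the height parameter space. By Lemma~\ref{ref:lem variation} each $P_i$ is determined by its height vector $(h_1^{(i)},\dots,h_n^{(i)})\in(0,\pi/2)^n$, and $\mathcal{P}(G,R)$ is identified with the (convex) region $\mathcal{H}$ of those height vectors for which the $G$-orbits are in convex position. First I would extract, by compactness of $[0,\pi/2]^n$, a subsequence along which $h_j^{(i)}\to h_j^\infty\in[0,\pi/2]$ for every $j$. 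Since $C$ is continuous, it suffices to show that the limit $(h_1^\infty,\dots,h_n^\infty)$ lies in the interior region $\mathcal{H}$; then the surface $P_\infty$ with these limit heights belongs to $\mathcal{P}(G,R)$ and $C(P_\infty)=\lim K_i$, as required.

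So the whole content is to rule out the three ways of reaching $\partial\mathcal{H}$, using that $K_\infty=\lim K_i$ lies in the \emph{open} set $\mathcal{K}(n)$, i.e.\ each $k_j^\infty<0$ and $\sum_j k_j^\infty>2\pi\chi(S)$ strictly. The first degeneration is collapse onto $H$, $h_j^\infty=0$: a vertex whose height tends to $0$ approaches the plane $H$ (the convex core), its cone flattens, the total angle tends to $2\pi$, and so $k_j\to 0$, contradicting $k_j^\infty<0$. The second is loss of convex position, where some vertex becomes coplanar with, or sinks into the convex hull of, its neighbours; such a point carries zero singular curvature, again contradicting $k_j^\infty<0$. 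In both cases the key quantitative input is that the singular curvature of a vertex varies continuously with the heights and vanishes precisely at these boundary configurations.

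The third degeneration, escape toward $H^*$ ($h_j^\infty=\pi/2$), is the delicate one and where I expect the main obstacle. Since the convexity and space-likeness of a surface in $\mathcal{P}(G,R)$ already force every $k_i<0$ (angle \emph{excess}, as is characteristic of convex space-like surfaces in a Lorentzian ambient), a single vertex cannot be pushed up to the focal point $H^*$ while the others stay at finite height without first violating convexity; I would make this precise and thereby reduce the case $h_j^\infty=\pi/2$ to the simultaneous escape of all vertices. In that global-collapse regime the Gauss--Bonnet identity $\mathrm{Area}(h_b)=2\pi|\chi(S)|+\sum_i k_i$ for the black metric shows that the area tends to $0$, so that $\sum_i k_i\to 2\pi\chi(S)$, the boundary value of $\mathcal{K}(n)$; this contradicts $\sum_j k_j^\infty>2\pi\chi(S)$.

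The hard part will be the uniform, coupled control of the heights when several vertices move toward the boundary at once: one cannot freeze the neighbours while sending a single $h_j$ to $0$ or to $\pi/2$. I expect to handle this by passing to the polar dual and using the Minkowski-type reformulation (Lemma~\ref{lem: eq stat} and Theorem~\ref{thm: minkADS}), where curvatures become face areas and heights become support numbers, together with the cocompactness of the $G$-action on $H$ already exploited in Lemma~\ref{ref:lem variation}. Cocompactness yields uniform bounds over a fundamental domain and turns the heuristic statements ``curvature $\to 0$ near $H$'' and ``area $\to 0$ near $H^*$'' into genuine estimates, after which Lemma~\ref{lem: cont meth} applies.
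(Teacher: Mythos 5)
Your overall strategy is the same as the paper's: parameterize $\mathcal{P}(G,R)$ by the heights, use compactness of $[0,\pi/2]^n$, and exclude the boundary degenerations using that the limit curvature vector lies in the \emph{open} set $\mathcal{K}(n)$. The treatment of total collapse onto $H$ (all curvatures tend to $0$) and of total escape to $H^*$ (areas tend to $0$, so Gauss--Bonnet forces $\sum_i k_i\to 2\pi\chi(S)$) matches the paper, and your explicit handling of the ``loss of convex position'' boundary --- a vertex that becomes flat carries curvature $0$ --- is a case the paper actually passes over in silence, so that is a small improvement. Your mechanism for a \emph{single} height tending to $0$ (``the cone flattens'') is not quite the right one --- the paper argues instead that such a vertex would fall inside the convex hull of the orbit of a vertex whose height stays bounded below, contradicting convexity --- but since you treat loss of convex position separately and correctly, the two cases together do cover $h_j^\infty=0$.

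The genuine gap is the mixed case of escape to $H^*$: some heights tend to $\pi/2$ while at least one stays bounded away from it. You explicitly defer this step (``I would make this precise''), and the mechanism you propose --- that pushing one vertex toward $H^*$ ``violates convexity'' --- is wrong: moving a vertex up toward $H^*$ makes it \emph{more} extreme, so convexity of the hull is never threatened from that side. What actually fails is space-likeness. Writing a point at height $t$ over $p\in H$ as $\cos t\, p+\sin t\, H^*$, two vertices $v$ at height $t_1$ over $p_x$ and $w$ at height $t_2\le\pi/2-\epsilon$ over $p_y$ satisfy $-\langle v,w\rangle_2=\cos t_1\cos t_2\cosh d(p_x,p_y)+\sin t_1\sin t_2\to\sin t_2<1$ as $t_1\to\pi/2$, so the edge $vw$ becomes time-like before $v$ reaches $H^*$. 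Equivalently --- and this is the paper's route --- the space-like planes containing the faces at a vertex of bounded height do not meet $H$, hence meet every time-like geodesic from $H$ to $H^*$ strictly before $H^*$, and the whole surface lies in their past, which caps all the other heights away from $\pi/2$. Without some version of this argument the reduction to ``simultaneous escape of all vertices,'' on which your Gauss--Bonnet step relies, is not established.
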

\begin{proof}

We will first prove that the sequence of heights defining the polyhedra $P_i$ has a converging subsequence
in $]0,\pi/2[^n$. Actually, the  sequence of heights belongs to $[0,\pi/2]^n$ which is compact, 
so $(P_i)_i$ has a converging subsequence in this set (that we again denote by $(P_i)_i$), 
hence it
suffices to check that no height goes to $0$ or to $\pi/2$.

First suppose that all the heights converge to $0$. In this case $(P_i)_i$ converges to a degenerated polyhedron (the hyperbolic
plane $H$). This means that all the curvatures go to zero, that is impossible, as the sequence of
curvatures is supposed to converge in $\mathcal{K}(n)$. So there exists at least one half-ray
$x$ such that the sequence of heights on this half-ray doesn't go to $0$. If the heights on another
ray $y$ ($y\notin G x$) go to zero, then, if  $i$ is sufficiently large, the vertex of $P_i$ on $y$ has
to be inside the convex hull of the points $G x_i$, where $x_i$ is the vertex of $P_i$ lying on $x$, 
this contradicts the fact that $P_i$ is convex.

Now suppose that all the heights converge to $\pi/2$. This means that all the vertices converge to 
the point $H^*$  dual of the hyperbolic plane $H$. In particular, 
the face areas in a fundamental domain for $G$ go to zero, and by the Gauss--Bonnet formula,
the sum of the curvatures go to $2\pi\chi(g)$. But this is impossible as by assumption 
the sequence of curvatures converges inside  $\mathcal{K}(n)$.

Therefore there exists at least one half-ray
$x$ such that the sequence of heights on this ray doesn't go to $\pi/2$. For each $i$, let $x_i$ be the vertex
of $P_i$ on $x$. As $P_i$ is convex, it is contained between the plane $H$ 
and the cone of vertex $x_i$ formed by the planes
containing the faces of $P_i$ meeting at $x$. 
Each half-ray containing a vertex meets
those planes, and this prevents the other heights from going to $\pi/2$. 
Indeed, if  $L$ is a plane containing a face of $P_i$, as it is space-like, 
then it meets every time-like geodesic. Moreover the orbit of 
$x$ accumulates on the boundary at infinity of $H$, so $L$ does not meet $H$, 
as $P_i$ is on one side of $L$. It is then clear that every time-like geodesic from $H$ to $H^*$ has to meet $L$.  

We proved that the heights of $(P_i)_i$ converge in $]0,\pi/2[$. Now as each $P_i$ is a convex polyhedron and
as the sequence of induced curvatures converges, it is clear that  $(P_i)_i$  converges inside $\mathcal{P}(G,n)$.
\end{proof}

\begin{lemma}\label{lem : jac}
The map $C$ is locally injective.
\end{lemma}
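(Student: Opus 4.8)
The plan is to show that the map $C$ is locally injective by proving that its Jacobian never vanishes, using a second-variation (rigidity) argument for the singular curvatures as functions of the heights. Since $\mathcal{P}(G,R)$ is parameterized by the heights $(h_1,\cdots,h_n)$ and $C$ sends these to the singular curvatures $(k_1,\cdots,k_n)$, local injectivity will follow if I can show that the differential $dC$ is invertible at every point, i.e.\ that the matrix $\left(\partial k_i/\partial h_j\right)$ is non-degenerate.

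\emph{First} I would compute the entries of this Jacobian matrix. The key observation is that the singular curvature $k_i$ at the vertex on $r_i$ depends only on the heights of that vertex and of its neighbors (the vertices sharing an edge with it), so the matrix $M=\left(\partial k_i/\partial h_j\right)$ is \emph{sparse}: $\partial k_i/\partial h_j = 0$ unless $i=j$ or $i,j$ are adjacent in the 1-skeleton. I expect this matrix to have a definite sign structure: each off-diagonal entry $\partial k_i/\partial h_j$ (for $i\neq j$ adjacent) should have a fixed sign, coming from the fact that raising a neighboring vertex increases (or decreases) the face angles at vertex $i$ monotonically. This is a purely local, elementary computation in $AdS_3$ trigonometry, using that the faces lie in space-like totally geodesic planes isometric to $H^2$ and that the vertices move along time-like half-rays orthogonal to $H$.

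\emph{Then} I would establish definiteness of $M$ by a discrete-analytic argument. The natural strategy is to show that $M$ (or $-M$) is a symmetric positive-definite matrix, or at least that it satisfies a strong diagonal-dominance or maximum-principle property that forces invertibility. Symmetry should follow from a Schläfli-type formula: there is typically a function (an ``energy'' or a volume-type quantity) whose Hessian with respect to the heights is exactly $M$, which gives $\partial k_i/\partial h_j = \partial k_j/\partial h_i$. Given symmetry and the sign pattern (negative off-diagonal entries together with diagonal dominance, or the structure of a discrete Laplacian), one concludes that $M$ is non-degenerate: if $Mx=0$ for some nonzero $x$, one examines an index $i$ where $x_i$ is extremal and derives a contradiction from the sign condition, exactly as in the proof that a weighted graph Laplacian with the appropriate boundary behavior has trivial kernel.

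\emph{The hard part} will be extracting the precise sign of the off-diagonal terms and the strict diagonal dominance from the $AdS_3$ geometry, since the Lorentzian signature and the fact that vertices lie on time-like segments of bounded length $\pi/2$ make the trigonometric identities less standard than in the Euclidean or hyperbolic convex-cap setting. In practice I would reduce each local contribution to a computation inside a single space-like face (a hyperbolic polygon) and keep careful track of signs as a vertex is pushed toward $H^*$. Once the Jacobian is shown to be non-degenerate, $C$ is a local diffeomorphism by the inverse function theorem, hence locally injective, which is exactly the content of the lemma; combined with Lemma~\ref{lem: propr} and Lemma~\ref{lem: cont meth}, this will complete the proof of Theorem~\ref{thm: presc curvADS}.
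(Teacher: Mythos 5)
Your overall strategy---compute the Jacobian $(\partial\omega_x/\partial h_y)$ and show it is invertible---is the paper's strategy, but two of your key steps would not survive contact with the actual geometry. First, the map $C$ itself need not be $C^1$: as the heights vary, the combinatorics of the convex polyhedral surface can change, so you cannot directly apply the inverse function theorem to $C$. The paper circumvents this by fixing a $G$-invariant triangulation of the faces of $P$ (introducing ``false edges''), defining a map $C_\Gamma$ on the larger space of not-necessarily-convex polyhedral surfaces with that fixed triangulation, proving that $C_\Gamma$ is $C^1$ with non-degenerate Jacobian at $P$, and then deducing local injectivity of $C$ from that of $C_\Gamma$ (a nearby $P'$ with the same curvatures could be given a compatible triangulation, contradicting local injectivity of $C_\Gamma$). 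Some device of this kind is unavoidable.

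Second, and more seriously, your preferred route to invertibility---symmetry of the Jacobian via a Schl\"afli-type potential, followed by positive definiteness---fails here. The paper explicitly observes that in general $\rho_{s_jx}\neq\rho_{xs_j}$, hence $a_{xy}\neq a_{yx}$: the matrix is \emph{not} symmetric, and no functional whose gradient equals $C_\Gamma$ is known in the Fuchsian AdS setting (the paper remarks that it is unclear whether a Minkowski-style variational proof can be carried out). What does work is your fallback: strict diagonal dominance. The paper computes, via hyperbolic--de Sitter trigonometry, that the off-diagonal contributions $a_{xy}^j$ are $\leq 0$ (strictly negative across true edges, zero across false edges, by the convexity criterion $\sinh\alpha_1+\sinh\alpha_2<0$ of Lemma~\ref{lem:ADSconvexite}), while the diagonal contributions satisfy $a_{xx}^j=-\cosh\ell_{xy}\,a^j_{yx}\geq |a^j_{yx}|$; since at least one true edge exists at each vertex, this yields $|a_{xx}|>\sum_{y\neq x}|a_{yx}|$ and hence invertibility. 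So if you execute the plan, drop the symmetry argument entirely and carry out the sign computation and the $\cosh\ell_{xy}\geq 1$ comparison explicitly---that inequality, not a maximum principle for a symmetric Laplacian, is what makes the dominance strict.
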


\begin{proof}
The idea is to study the Jacobian of $C$ and to invoke the local inverse theorem. Unfortunately, it is not clear
whether $C$ is $C^1$, as the combinatorics of the polyhedra may change under deformation. Let $P\in \mathcal{P}(G,n)$. 
Triangulate the faces of $P$ such that 
no new vertex arises and such that the triangulation is invariant under the action of $G$. 
The new edges added are called \emph{false edges} and the older ones \emph{true edges}.
We denote by
$\Gamma$  the combinatorics of the resulting triangulation 
and by $\mathcal{P}^{\Gamma}(G,R)$ the set of  polyhedral surfaces 
invariant under the action of $G$
with  vertices on $GR$ with a triangulation of combinatorics $\Gamma$
(the elements of $\mathcal{P}^{\Gamma}(G,R)$ are not asked to be convex). We denote by $C_{\Gamma}$ the map from 
$\mathcal{P}^{\Gamma}(G,n)$  to $\mathbb{R}^n$ which associates to each polyhedral surface the curvature at the vertices.
 In Subsection~\ref{subsec: jac} we will prove that, 
at a point $P\in\mathcal{P}(G,R)$, $C_{\Gamma}$ is $C^1$ and its Jacobian  
is non-degenerate. (Actually we will study the map which associates  to the heights the cone-angles, 
instead of curvatures, this does not change the result.) Hence $C_{\Gamma}$ is locally injective around $P$.

Now suppose that $C$ is not locally injective. 
This means that there exists $P'\in\mathcal{P}(G,R)$,
arbitrarily close to $P$, with the same curvature at the corresponding vertices.
As $P'$ is sufficiently close 
to $P$, they can be endowed with a triangulation of same combinatorics $\Gamma$, such that, if new vertices appear, 
they appear on a true edge.
We don't consider the heights of the false new vertices as variables, and
we don't compute the possible variation of the cone-angles
(which are equal to $2\pi)$ at them. Only the angles on the faces around the false vertices
enter the  computations. So the corresponding map
  $C_{\Gamma}$ is not locally injective, that is a contradiction.
\end{proof}

\subsection{Interlude: The case of the sphere}

In this section we switch from the geometry of AdS to that of the sphere. Although the
results presented here are of independent interest, our main motivation is to present
the (rather technical) arguments of the next section in a context in which most readers
are more confortable. Following the same arguments in the AdS setting should then be
relatively easy.

We wonder whether there exists an analogue of Theorem~\ref{thm: presc curvADS} for the sphere. 
As in the AdS case, the question can be reduced to proving
that the map sending the heights $h_x$ determining a convex polyhedron to 
the curvatures $k_x$ (actually the cone-angle $\omega_x$) at its vertices is a bijection (between suitable spaces)---
 here the half-rays should be from a point $o$ contained in the interior of the polyhedron. 
We will compute the Jacobian of this map --- in a simplified  case where
 we assume that the polyhedron has only triangular faces.

We need the following lemma of spherical trigonometry.

\begin{lemma}\label{lem:trigoS}
 Let $a,b,c$ be the side lengths of a spherical triangle with angles $\alpha, \beta, \gamma$. 
Considering $b$ as a function of $a,c,\beta$, we get
\begin{equation}
\frac{\partial b}{\partial a}= \cos\gamma \label{eq: trigo sph 2}.
\end{equation}
Considering $\alpha$ as a function of $a,c,\beta$, we get
\begin{equation}
\frac{\partial \alpha}{\partial a}=\frac{\sin \gamma}{\sin b}  \label{eq: trigo sph 1},
\end{equation}
\begin{equation}
\frac{\partial \alpha}{\partial c}= -\frac{\sin\alpha\cos b}{\sin b}  \label{eq: trigo sph 3}.
\end{equation}
\end{lemma}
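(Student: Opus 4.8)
The statement to prove is Lemma~\ref{lem:trigoS}, a collection of three derivative formulas for a spherical triangle. We have a spherical triangle with sides $a,b,c$ (these are arc lengths, i.e.\ angles subtended at the center) opposite to the angles $\alpha,\beta,\gamma$ respectively. The plan is to treat $a,c,\beta$ as the three independent data determining the triangle (an angle flanked by two sides — this is the ``SAS'' determination, so the triangle is rigid and all other quantities are smooth functions of $(a,c,\beta)$). Then $b$, $\alpha$, and $\gamma$ become functions of $(a,c,\beta)$, and I want their partial derivatives. The natural tool is the spherical law of cosines, which gives $b$ directly in terms of $a,c,\beta$, together with the spherical laws of cosines/sines relating the remaining parts.

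**Proving the first formula.** The spherical law of cosines for sides reads
\begin{equation*}
\cos b = \cos a \cos c + \sin a \sin c \cos\beta~.
\end{equation*}
Since $\beta$ is held fixed, I differentiate both sides with respect to $a$:
\begin{equation*}
-\sin b \,\frac{\partial b}{\partial a} = -\sin a \cos c + \cos a \sin c \cos\beta~.
\end{equation*}
The right-hand side is exactly $-\sin b\cos\gamma$ by the law of cosines in the form $\cos\gamma\sin b = \sin a\cos c - \cos a\sin c\cos\beta$ (equivalently, one recognizes the right-hand side as the ``dual'' cosine expression). Dividing by $-\sin b$ yields $\partial b/\partial a = \cos\gamma$, which is \eqref{eq: trigo sph 2}. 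The only care needed is to confirm that the combination appearing is genuinely $\sin b\cos\gamma$; this is a standard rearrangement of the law of cosines applied to the side $a$ (with $a = \cos b\cos c + \sin b\sin c\cos\alpha$ being less convenient, so instead one uses the formula solving for $\cos\gamma$).

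**The two formulas for $\alpha$, and the expected obstacle.** For \eqref{eq: trigo sph 1} and \eqref{eq: trigo sph 3} I would use the spherical law of sines, $\sin\alpha/\sin a = \sin\gamma/\sin c = \sin\beta/\sin b$. From $\sin\alpha = \sin a\,\sin\beta/\sin b$ with $\beta$ fixed, differentiating in $a$ gives $\cos\alpha\,\partial\alpha/\partial a = \sin\beta(\cos a\sin b - \sin a\cos b\,\partial b/\partial a)/\sin^2 b$; substituting $\partial b/\partial a = \cos\gamma$ from the first part and simplifying with the sine and cosine rules should collapse to $\sin\gamma/\sin b$. Similarly, differentiating in $c$ and using $\partial b/\partial c = \cos\alpha$ (the symmetric analog of the first formula, by relabeling) should give $-\sin\alpha\cos b/\sin b$. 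The main obstacle here is purely the trigonometric bookkeeping: each differentiation produces an expression mixing all six triangle quantities, and reducing it to the clean stated form requires repeated back-substitution of the cosine and sine rules (and possibly the ``five-parts'' relation such as $\sin b\cos\alpha = \cos a\sin c - \sin a\cos c\cos\beta$). There is no conceptual difficulty once the law of cosines and its derivative $\partial b/\partial a = \cos\gamma$ are in hand; the risk is only in managing signs and not introducing spurious factors. A clean alternative would be to differentiate the sine rule in the symmetric form $\sin\alpha\sin c = \sin\gamma\sin a$ and feed in $\partial\gamma/\partial a$, but I expect the direct route above, anchored on \eqref{eq: trigo sph 2}, to be the shortest.
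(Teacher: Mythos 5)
Your proposal is correct, and for the first formula it is essentially the paper's argument: both differentiate the side cosine law $\cos b=\cos a\cos c+\sin a\sin c\cos\beta$ in $a$ and identify $\sin a\cos c-\cos a\sin c\cos\beta$ with $\sin b\cos\gamma$ (the paper does this in two explicit substitutions of cosine laws, you quote the resulting five-parts identity directly — same computation). For the two derivatives of $\alpha$ the routes genuinely diverge. The paper first regards $\alpha$ as a function $\tilde\alpha(a,b,c)$ of the three sides, computes $\partial\tilde\alpha/\partial a=1/(\sin b\sin\gamma)$ and $\partial\tilde\alpha/\partial b=-\cot\gamma/\sin b$ from the angle cosine law, and then applies the chain rule $\partial\alpha/\partial a=\partial\tilde\alpha/\partial a+(\partial\tilde\alpha/\partial b)(\partial b/\partial a)$, feeding in the first formula. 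You instead differentiate the sine rule $\sin\alpha=\sin a\sin\beta/\sin b$; I checked that this does close up: the numerator $\cos a\sin b-\sin a\cos b\cos\gamma$ equals $\sin c\cos\alpha$ by another five-parts identity, the factor $\cos\alpha$ cancels, and the sine rule converts $\sin\beta\sin c$ into $\sin\gamma\sin b$, giving $\sin\gamma/\sin b$; the $c$-derivative works the same way using $\partial b/\partial c=\cos\alpha$. Your route is shorter, but it buys this at the cost of dividing by $\cos\alpha$, so the argument as written degenerates when $\alpha=\pi/2$ (one must then conclude by continuity or analyticity of both sides in $(a,c,\beta)$); the paper's chain-rule route is uniform in the triangle and, more importantly for this paper, is the template that is transposed verbatim to the hyperbolic--de Sitter trigonometry of Section 4, where the analogous sine-rule shortcut would be less transparent.
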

\begin{proof}
The derivative of the cosine law
\begin{equation}\label{eq: cos law1}
 \cos b = \cos c \cos a + \sin c \sin a \cos \beta
\end{equation}
gives
\begin{eqnarray*}
 \ \frac{\partial b}{\partial a}&=&\frac{\sin a \cos c - \cos a \sin c \cos \beta}{\sin b} \\
\ &\stackrel{(\ref{eq: cos law1})}{=}&\frac{\cos c - \cos b \cos a}{\sin a \sin b}\stackrel{*}{=}\frac{\sin a \sin b \cos \gamma}{\sin a \sin b}=\cos \gamma,
\end{eqnarray*}
and (\ref{eq: trigo sph 2}) is proved (we used  another cosine law at $*$).

Now we consider $\alpha$ as a function of the side-lengths $a,b,c$, and we denote it by $\tilde{\alpha}$ to avoid confusion with $\alpha$ appearing in  (\ref{eq: trigo sph 1}) and (\ref{eq: trigo sph 3}). The derivative of the suitable cosine law gives that
\begin{equation}\label{eq: cos int 1}
 \frac{\partial \tilde{\alpha}}{\partial a}=\frac{1}{\sin b \sin \gamma}.
\end{equation}
and that 
\begin{equation}\label{eq: cos int 2}
 \frac{\partial \tilde{\alpha}}{\partial b}=-\frac{\cos c -\cos a \cos b}{\sin^2 b \sin c \sin \alpha}
\stackrel{**}{=}-\frac{\sin c\cos \gamma}{\sin b \sin c \sin \gamma}=-\frac{\mathrm{cotan} \gamma}{\sin b}
\end{equation}
(we used another cosine law and a sine law at $(**)$). Now if $\alpha$ is a function of $a,c,\beta$ we have
\begin{equation}\label{eq: derivee}
  \frac{\partial \alpha}{\partial a}=\frac{\partial \tilde{\alpha}}{\partial a}+\frac{\partial \tilde{\alpha}}{\partial b}\frac{\partial b}{\partial a} 
\end{equation}
and (\ref{eq: trigo sph 1}) follows by using (\ref{eq: cos int 1}), (\ref{eq: cos int 2}) and  (\ref{eq: trigo sph 2}) in the equation above. Equation (\ref{eq: trigo sph 3}) is proved similarly.

\end{proof}

\begin{figure}[ht]
\input{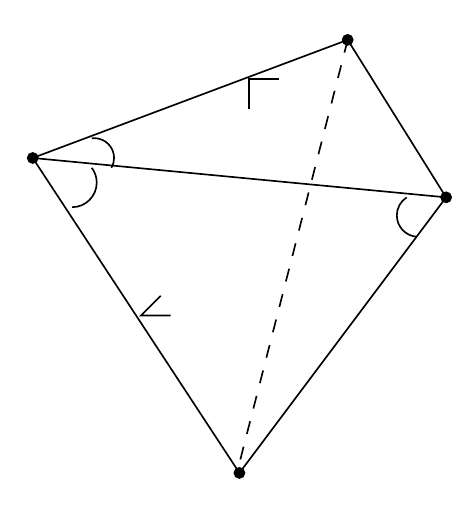_t}
\caption{Notations in a pyramid.\label{fig:pyramid}}
\end{figure}

Let $\omega_x$ be the cone-angle at the vertex on the ray $r_x$.  For each face triangle $xyz$ of  $P$ we get a pyramid $oxyz$. 
The cone-angle $\omega_x$ is decomposed as a sum of angles of the form $\omega_{xyz}$ (see notations on Figure~\ref{fig:pyramid}). 
The link of the pyramid at $x$ is a triangle in which $\omega_{xyz}$ is a function of $\rho_{xy}, \rho_{xz}$ and $d_{xyz}$  when a height varies. Note that 
$d_{xyz}$ remains  fixed by hypothesis.

We denote $a_{xy}:=\frac{\partial \omega_x}{\partial h_y}$. There is at most one edge between two different vertices.
If there is no edge then $a_{xy}=0$ ($y\not= x$).
If there is one edge $xy$, shared by the triangles $xyz$ and $xyz'$, then

\begin{eqnarray*}
 \ && a_{xy}=\frac{\partial \omega_x}{\partial h_y}= \frac{\partial \omega_x}{\partial \rho_{xy}}\frac{\partial \rho_{xy}}{\partial h_y} = \left(\frac{\partial \omega_{xyz}}{\partial \rho_{xy}}+\frac{\partial \omega_{xyz'}}{\partial \rho_{xy}}\right)\frac{\partial \rho_{xy}}{\partial h_y}  \stackrel{(\ref{eq: trigo sph 2}),(\ref{eq: trigo sph 1})}{=}\left(\cos\alpha_{xyz}+\cos\alpha_{xyz'}\right)\frac{\sin \rho_{yx}}{\sin\ell_{xy}} 
\end{eqnarray*}

By convexity $\alpha_{xyz}$ and $\alpha_{xyz'}$ are both positive and $\alpha_{xyz}+\alpha_{xyz'}<\pi $, so  $a_{xy}>0$. 

\begin{remark}
The fact that $a_{xy}>0$  has a clear geometric meaning: 
consider the convex spherical polygon spanned by the edges at $x$. If the height of $y$ increase, we get another convex
spherical polygon which is strictly greater than the former one. As they are convex, the perimeter of the 
new polygon is greater than the former one, and the perimeter is exactly the cone-angle at $x$.

This also means that if the support number $h_y^*$ of the dual of $P$ increases, then  the area
of the face dual to $x$ increases (recall that $h^*_y=\pi/2-h_y$).
\end{remark}

We can also compute the diagonal terms. $E$ is the set of edges of the triangulation, and we imply that
$\ell_{xy}=0$ if there is no edge between $x$ and $y$.

\begin{eqnarray*}
 \ a_{xx}:=\frac{\partial \omega_x}{\partial h_x}=\sum_{xy\in E} \frac{\partial \omega_x}{\partial \rho_{xy}}\frac{\partial \rho_{xy}}{\partial h_x}  \stackrel{(\ref{eq: trigo sph 1}),(\ref{eq: trigo sph 3})}{=}-\sum_{xy\in E}\cos \ell_{xy}\frac{\sin \rho_{xy}}{\sin \rho_{yx}}\frac{\partial \omega_x}{\partial \rho_{xy}}\frac{\partial \rho_{xy}}{\partial h_y} \\
\ =-\sum_{xy\in E}\cos \ell_{xy}\frac{\sin \rho_{xy}}{\sin \rho_{yx}}\frac{\partial \omega_x}{\partial h_y} =-\sum_{x\not = y}\cos \ell_{xy}\frac{\sin \rho_{xy}}{\sin \rho_{yx}} a_{xy}=-\sum_{x\not = y}\cos \ell_{xy}a_{yx}.
\end{eqnarray*}

\begin{remark}
This formula has also a clear geometric meaning. If a face $x^*$ of the dual of $P$ has all its dihedral angles 
$<\pi/2$ (resp. $>$), then if $h_x^*$ increases,   the area of $x^*$ decreases (resp. increases). The case
when the dihedral angles are $\pi /2$ is a kind of critical point.
\end{remark}

It is not clear  if the matrix $(a_{xy})_{xy}$ is non-degenerate. Further study would lead us too far from the scope of this paper. So we address the following question:

\begin{ques}\label{qu}
Is there an analog of the Alexandrov prescribed curvature theorem in the sphere?
More precisely, let $r_1,\cdots, r_n$ be half-rays in the sphere $S^3$ starting from a same point  and $(k_1,\cdots,k_n)$ 
real positive numbers. Does there exist conditions on the $r_i$ and $k_i$ such that the following holds: there exists a unique
convex polyhedron having its vertices on the rays $r_1,\cdots,r_n$, with curvature $k_i$ at the vertex on $r_i$?
\end{ques}
 
An obvious condition on the $k_i$ is that they have to satisfy the Gauss--Bonnet formula.
Note that an answer to the question would give information about the existence of an analog of 
the Minkowski theorem in the sphere (from an argument very similar to the proof of Lemma~\ref{lem: eq stat}).
The corresponding statement in the hyperbolic case is stated without proof in \cite[9.3.1]{Ale05}.

\subsection{End of the proof of Lemma~\ref{lem : jac}}\label{subsec: jac}

\subsubsection{Hyperbolic-de Sitter trigonometry}

We denote by $\mathbb{R}^{n,1}$ the Minkowski space of dimension $n$, 
that is, the space $\mathbb{R}^{n+1}$ endowed with the bilinear form
\begin{equation}
\langle x,y \rangle_1 = x_1y_1+\cdots+x_{n-1}y_{n-1}-x_{n+1}y_{n+1}. \nonumber
\end{equation}
We denote by $\norm{.}_1$ the associated pseudo-norm: $\norm{x}_1:=\sqrt{\langle x,x\rangle_1}$. The pseudo-norm of time-like vectors is chosen as a positive multiple of $i$. A model of the hyperbolic space is the upper-sheet of the two-sheeted hyperboloid:
\begin{equation}
H^n=\{ x\in \mathbb{R}^{n,1} \vert \norm{x}_1^2=-1,\; x_{n+1}>0\};\nonumber
\end{equation}
and a model of the de Sitter space is the one-sheeted hyperboloid:
 \begin{equation}
dS^n=\{ x\in \mathbb{R}^{n,1} \vert \norm{x}_1^2=1\}.\nonumber
\end{equation}
We define the ``de Sitter-Hyperbolic space'' $HS^n$ as the union of the two spaces, and we denote by $d$ its pseudo-distance. It is well-known that
\begin{eqnarray*}
  \ \langle x,y\rangle_1 &=& - \cosh (d(x,y)), (x,y)\in H^n\times H^n,\\
\ \langle x,y\rangle_1 &=& \cos (d(x,y)), (x,y)\in dS^n\times dS^n, \\
\ \langle x,y\rangle_1 &=&   \sinh (d(x,y)), (x,y)\in H^n\times dS^n,
\end{eqnarray*}
where in the last equation the distance between  $x\in H^n$ and  $y\in dS^n$ is defined 
as follows. Let $y^*$ be the hyperplane orthogonal to $y$ for  $\langle \cdot,\cdot \rangle_1$. 
We also denote by $y^*$ its intersection with the hyperbolic space, that is a totally geodesic hypersurface. The distance $d(x,y)$ is  the oriented hyperbolic distance between $x$ and $y^*$: it is positive if  $x$ is lying in the same half-space delimited by $y^*$ than $y$, and negative otherwise. See \cite{Thu97,Sch98} for more details.

Similarly to the Euclidean case, we define the \emph{angle} $\theta$ between two vectors of the Minkowski space as the corresponding distance  on the hyperboloids:
\begin{eqnarray*}
\ &&\cosh (\theta)= \frac{\langle x,y\rangle_1}{\norm{x}_1\norm{y}_1}, (\norm{x}_1,\norm{y}_1)\in i\mathbb{R}_+\times i\mathbb{R}_+, x_{n+1}>0, y_{n+1}>0,\\
\ &&\cos (\theta)= \frac{\langle x,y\rangle_1}{\norm{x}_1\norm{y}_1}, (\norm{x}_1,\norm{y}_1)\in \mathbb{R}_+\times \mathbb{R}_+,\\
\ &&\sinh (\theta) =  i \frac{\langle x, y \rangle_1}{\norm{x}_1\norm{y}_1}, (\norm{x}_1,\norm{y}_1)\in i\mathbb{R}_+\times \mathbb{R}_+.
\end{eqnarray*}
In the first and third cases the angle $\theta$ is a real number.  For the second case we need to describe the different possibilities:
\begin{enumerate}
 \item[-] if $x$ and $y$ span a space-like plane, $\theta\in(0,\pi)$;
\item[-] if $x$ and $y$ span a time-like plane  and intersect the same connected component of the resulting geodesic on the de Sitter space, then $\theta\in i \mathbb{R}_+$;
\item[-] if $x$ and $y$ span a time-like plane  and intersect different connected components of the resulting geodesic on the de Sitter space, then $\theta\in \pi -i \mathbb{R}_+$ (this follows from the preceding case as by definition the cosine of the angle between $x$ and $y$ is minus the cosine of the angle between $-x$ and $y$).
\end{enumerate}

 It is also possible to define this angle if $x$ and $y$ belong to different 
sheets of the hyperboloid with two sheets, but we won't use it, and we won't consider the case where $x$ and $y$ span a light-like plane. 
There exists other natural ways  to define  these angles  but all the definitions are equivalent, up to an affine transformation 
(maybe complex), see for example  \cite{Sch07,Cho09}. 

\begin{lemma}\label{lem: antisym}
 Let $\langle \cdot ,\cdot \rangle'$ be the bilinear form $-\langle \cdot ,\cdot \rangle_1$.
\begin{enumerate}
 \item If $\norm{x}_1\in \mathbb{R}_+$, then $\norm{x}'\in i\mathbb{R}_+$ and  $\norm{x}_1=i\norm{x}'$.
\item If $\norm{x}_1\in i\mathbb{R}_+$, then $\norm{x}'\in \mathbb{R}_+$ and  $\norm{x}_1=-i\norm{x}'$.
\end{enumerate}
\end{lemma}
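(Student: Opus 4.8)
The plan is to treat this as a direct unwinding of the definition of the pseudo-norm, since the statement is purely algebraic. The only genuine ingredient is the sign convention ``the pseudo-norm of a time-like vector is a positive multiple of $i$'', and the entire argument consists in tracking how that convention transforms when the bilinear form $\langle\cdot,\cdot\rangle_1$ is replaced by its opposite $\langle\cdot,\cdot\rangle'=-\langle\cdot,\cdot\rangle_1$. The starting point, valid for every vector $x$, is the squared-norm identity
\[
(\norm{x}')^2=\langle x,x\rangle'=-\langle x,x\rangle_1=-(\norm{x}_1)^2~.
\]
Thus $\norm{x}_1$ and $\norm{x}'$ always agree up to a factor $\pm i$, and all that remains is to determine, in each of the two cases, which branch of the square root the convention selects.

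First I would treat case (1). The hypothesis $\norm{x}_1\in\mathbb{R}_+$ means $\langle x,x\rangle_1=(\norm{x}_1)^2>0$, i.e.\ $x$ is space-like for $\langle\cdot,\cdot\rangle_1$. Consequently $\langle x,x\rangle'=-\langle x,x\rangle_1<0$, so $x$ is time-like for $\langle\cdot,\cdot\rangle'$, and the stated convention forces $\norm{x}'\in i\mathbb{R}_+$; this is already the first assertion of (1). Writing $\norm{x}'=is$ with $s>0$ and comparing $(\norm{x}')^2=-s^2$ with $-(\norm{x}_1)^2$ yields $s=\norm{x}_1$, so that substituting back gives the precise proportionality factor recorded in (1).

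Case (2) is completely symmetric. Here $\norm{x}_1\in i\mathbb{R}_+$ says that $x$ is time-like for $\langle\cdot,\cdot\rangle_1$, hence space-like for $\langle\cdot,\cdot\rangle'$, so $\norm{x}'\in\mathbb{R}_+$; writing $\norm{x}_1=is$ with $s>0$ and again comparing squares identifies $s$ with $\norm{x}'$ and produces the identity of (2). Note that neither case uses the explicit shape of $\langle\cdot,\cdot\rangle_1$, only that $\langle\cdot,\cdot\rangle'$ is its negative. The sole point demanding care — and the only place where the sign could be mishandled — is the consistent bookkeeping of the two imaginary square-root branches dictated by the positive-multiple-of-$i$ convention; beyond this purely notational issue there is no obstacle, and the lemma follows immediately.
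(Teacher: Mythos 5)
Your proof is correct and follows essentially the same route as the paper's: square both pseudo-norms to get $(\norm{x}')^2=-(\norm{x}_1)^2$, then use the sign of $\langle x,x\rangle_1$ versus $\langle x,x\rangle'$ together with the ``positive multiple of $i$'' convention to decide which of $\mathbb{R}_+$ or $i\mathbb{R}_+$ each norm lies in. Like the paper, you leave the final determination of the factor $\pm i$ as a one-line substitution rather than writing it out, which is acceptable at the level of rigor the paper itself adopts.
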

\begin{proof}
If $\norm{x}_1\in \mathbb{R}_+$ (resp. $i\mathbb{R}_+$), then $\langle x,x\rangle_1>0$ (resp. $<0$), so $\langle x,x\rangle'<0$ (resp. $>0$) and $\norm{x}'\in i\mathbb{R}_+$ (resp. $\mathbb{R}_+$). 
It is immediate to check that in both cases $\norm{x}_1=\sqrt{-1}\norm{x}'$. The choice of the square root
of $-1$ follows because we know the spaces the terms belong to.
\end{proof}

\begin{lemma}\label{lem:ADSconvexite}
In a Lorentzian space-form $M$, consider an angle formed by two different space-like half-hyperplanes $S_1,S_2$ meeting at a codimension $2$ plane $E$, such that there exists a time-like half-hyperplane $T$ containing $E$. Let $\alpha_i$ be the dihedral angle between $S_i$  and $T$.
\begin{enumerate}
 \item The half-planes $S_1$ and $S_2$ belong to the same hyperplane if and only if $\sinh \alpha_1+\sinh \alpha_2=0$.
\item  If $S_1$ and $S_2$ are not in the same hyperplane, then $T$ is inside the convex side of the angle if and only if $\sinh \alpha_1+\sinh \alpha_2<0$.   
\end{enumerate}
\end{lemma}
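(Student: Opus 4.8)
The statement is infinitesimal: all the data (the half-hyperplanes $S_1,S_2,T$ and the dihedral angles $\alpha_i$) are determined by tangent directions along $E$, so the plan is to work in the tangent Lorentzian vector space at a point of $E$ and reduce to dimension $2$. Since $E$ is contained in the space-like hyperplanes $S_1$ and $S_2$, the plane $E$ is space-like, hence its orthogonal plane $N$ in this tangent space has signature $(1,1)$. Inside $N$ the three half-hyperplanes become three unit rays: two space-like rays $s_1,s_2$ (the directions of $S_1,S_2$ orthogonal to $E$) and one time-like ray $t$ (the direction of $T$), and by definition the dihedral angle $\alpha_i$ between $S_i$ and $T$ is exactly the signed de Sitter--hyperbolic angle between $s_i$ and $t$, i.e.\ the third case of the angle defined above. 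This is also why the argument does not depend on which Lorentzian space-form $M$ is: everything happens in $(N,\langle\cdot,\cdot\rangle_1)\cong\mathbb{R}^{1,1}$.

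The computation then rests on one identity. Normalising $\norm{s_i}_1=1$ and $\norm{t}_1=i$, the defining formula for the angle of the third type gives $\sinh\alpha_i = i\,\langle s_i,t\rangle_1/(\norm{s_i}_1\norm{t}_1)=\langle s_i,t\rangle_1$, so that
\[ \sinh\alpha_1+\sinh\alpha_2=\langle s_1+s_2,\,t\rangle_1 . \]
(The sign inside this identity is where Lemma~\ref{lem: antisym} is used, to pass between $\langle\cdot,\cdot\rangle_1$ and $-\langle\cdot,\cdot\rangle_1$ when choosing the square roots.) Choosing a Lorentz frame with $t=(0,1)$ and writing the two space-like rays through their rapidities, $s_i=(\varepsilon_i\cosh\phi_i,\sinh\phi_i)$, I would use that the presence of the time-like wall $T$ along $E$ puts $s_1$ and $s_2$ on the two sides of the line $\mathbb{R}t$, i.e.\ $\varepsilon_1=-\varepsilon_2$. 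Then $\sinh\alpha_1+\sinh\alpha_2=-(\sinh\phi_1+\sinh\phi_2)=-2\sinh\frac{\phi_1+\phi_2}{2}\cosh\frac{\phi_1-\phi_2}{2}$, and since $\cosh$ never vanishes the sign of $\sinh\alpha_1+\sinh\alpha_2$ is exactly the sign of $-(\phi_1+\phi_2)$.

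Both parts then follow from this single sign computation. For (1), $S_1$ and $S_2$ lie in the same hyperplane exactly when the rays $s_1,s_2$ are collinear; being distinct and on opposite sides of $\mathbb{R}t$ this means $s_2=-s_1$, that is $\phi_2=-\phi_1$, equivalently $\phi_1+\phi_2=0$, equivalently $\sinh\alpha_1+\sinh\alpha_2=0$. For (2), $T$ lies on the convex side of the angle precisely when $t$ belongs to the convex cone spanned by $s_1$ and $s_2$; writing $t=\lambda s_1+\mu s_2$ and solving the two linear equations gives $\mu\,\sinh(\phi_1+\phi_2)/\cosh\phi_1=1$, so a solution with $\lambda,\mu\ge 0$ exists iff $\phi_1+\phi_2>0$, i.e.\ iff $\sinh\alpha_1+\sinh\alpha_2<0$.

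The routine trigonometry is harmless; the real care is in the orientation bookkeeping. Two points deserve attention. First, one must fix the orientation convention so that $\alpha_i$ is genuinely the signed angle for which $\sinh\alpha_i=\langle s_i,t\rangle_1$ (this is the oriented-distance convention recalled above, and the sign choices are exactly those controlled by Lemma~\ref{lem: antisym}). Second, and more importantly, one must justify that $s_1$ and $s_2$ sit on opposite sides of $\mathbb{R}t$: the quantity $\sinh\alpha_1+\sinh\alpha_2=\langle s_1+s_2,t\rangle_1$ only detects the time-component of $s_1+s_2$, so without this separation it could vanish while $s_1+s_2\neq0$, and the ``if and only if'' in~(1) would fail. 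This separation is precisely what the hypothesis that $T$ is a time-like half-hyperplane bordering the angle along $E$ guarantees, and isolating this point cleanly is the only genuine obstacle.
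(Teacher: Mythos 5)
Your proof is correct and follows essentially the same route as the paper's (one\nobreakdash-line) argument: reduce to the Minkowski plane orthogonal to $E$ in the tangent space, identify $\sinh \alpha_i$ with $\langle s_i,t\rangle_1$ for unit normal directions, and read off both claims from the sign of $\langle s_1+s_2,t\rangle_1$. You are in fact more explicit than the paper on the one delicate point --- that the two space-like normal directions must lie on opposite sides of $\mathbb{R}t$ for the ``only if'' implications to hold --- which the paper's proof leaves implicit but which is exactly what the configuration of two faces meeting along an edge transverse to the time-like wall supplies in the application.
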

In the lemma above and in all the remainder of the section, a dihedral angle is an interior dihedral angle:  for any 
$x\in E$, $\alpha_i$ is the angle in $T_xM$ between a vector of $T_xS_i$ orthogonal to $T_xE$ and a  vector  of $T_xT$ orthogonal to $T_xE$.
\begin{proof}
By considering the problem in the Minkowski plane spanned by $y_1$ and $y_2$ in $T_xM$, it is easy to see that the half-plane are in the same plane if and only if $y_1=-y_2$ and that the convexity is equivalent to $\langle x,-y_2 \rangle > \langle x, y_1 \rangle $, that gives the condition of the lemma by definition of $\sinh\alpha_i$. 
\end{proof}

\begin{lemma}\label{lem:trigoHS}
Consider a contractible triangle in $dS^2$, with two space-like edges of length $a$ and $c$ and a time-like edge of length $ib$ and angles $\alpha,i\beta,\gamma$ ($a,b,c,\alpha,\beta,\gamma$ are real numbers). Then
\begin{eqnarray*}
 \  &&\left\{
      \begin{matrix}
        \cos a=\cosh b \cos c + \sinh b \sin c \sinh \alpha \\
        \cosh b= \cos c \cos a + \sin c \sin a \cosh \beta \\
        \cos c = \cos a \cosh b+\sin a \sinh b \sinh \gamma \\
      \end{matrix}
    \right., \\
  \ &&\frac{\sin a }{\sinh \alpha}=\frac{\sinh b}{\sinh \beta}=\frac{\sin c}{\cosh \gamma}, \\
\ &&\cosh \beta=\sinh \alpha \sinh \gamma+\cosh \alpha \cosh \gamma \cosh b.
\end{eqnarray*}
\end{lemma}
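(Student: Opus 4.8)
The plan is to realize the triangle inside the ambient quadric, exactly as one proves the classical laws of cosines and sines on $S^2$ or $H^2$, and to read off every length and angle from inner products in $\mathbb{R}^{2,1}$. Write $A,B,C\in dS^2\subset\mathbb{R}^{2,1}$ for the vertices, with $a=d(B,C)$ and $c=d(A,B)$ the space-like sides, $AC$ the time-like side of length $ib$, and with angles $\alpha,i\beta,\gamma$ at $A,B,C$ respectively. The boundary relations recalled just before Lemma~\ref{lem: antisym} give at once $\langle A,B\rangle_1=\cos c$, $\langle B,C\rangle_1=\cos a$ and $\langle A,C\rangle_1=\cos(ib)=\cosh b$.

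First I would prove the three laws of cosines by a moving-frame computation at each vertex. At $A$, write $B=A\cos c+U\sin c$ and $C=A\cosh b+V\sinh b$, where $U$ is the unit space-like and $V$ the unit time-like tangent vector at $A$ pointing towards $B$ and $C$; the definition of the angle between a space-like and a time-like vector gives $\langle U,V\rangle_1=\sinh\alpha$. Expanding $\langle B,C\rangle_1$ and using $\langle A,U\rangle_1=\langle A,V\rangle_1=0$ yields $\cos a=\cosh b\cos c+\sinh b\sin c\sinh\alpha$. Repeating the computation at $C$ (again one space-like and one time-like edge, angle $\gamma$) gives the third identity, and at $B$, where both tangent vectors are space-like and span a time-like plane so that $\langle U',W'\rangle_1=\cos(i\beta)=\cosh\beta$, one obtains $\cosh b=\cos a\cos c+\sin a\sin c\cosh\beta$. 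The only delicate point is the causal bookkeeping: at each vertex one must decide which tangent is space-like and which time-like, and hence whether the angle relation contributes a $\sinh$ or a $\cosh$.

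For the law of sines I would not combine the cosine laws in pairs, but rather exhibit a single symmetric quantity equal to all three ratios. Writing $G$ for the Gram determinant of $A,B,C$, I would solve the first law of cosines for $\sinh\alpha$ and use $\cosh^2\alpha-\sinh^2\alpha=1$ together with the key identity $\sin^2c\,\sinh^2b+(\cos a-\cos c\cosh b)^2=-G$ to get $\cosh^2\alpha=-G/(\sin^2c\,\sinh^2b)$; the analogous eliminations for $\beta$ and $\gamma$ give $\sinh^2\beta=-G/(\sin^2a\,\sin^2c)$ and $\cosh^2\gamma=-G/(\sin^2a\,\sinh^2b)$. Hence $\sin^2a/\cosh^2\alpha=\sinh^2b/\sinh^2\beta=\sin^2c/\cosh^2\gamma=-\sin^2a\,\sin^2c\,\sinh^2b/G$, which is the law of sines, the function of the angle paired with a space-like side being $\cosh$ and the one paired with the time-like side being $\sinh$.

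Finally, for the second (dual) law of cosines $\cosh\beta=\sinh\alpha\sinh\gamma+\cosh\alpha\cosh\gamma\cosh b$, I would pass to the polar triangle, replacing $A,B,C$ by the unit normals of the three edge-planes. By the duality recalled in Subsection~\ref{subsub:dual}, the pairwise inner products of these normals encode the angles $\alpha,\beta,\gamma$, so applying the first-kind law of cosines already established to the polar triangle returns precisely a relation between the three angles and the side $b$; alternatively it can be extracted algebraically from the three first-kind laws. Lemma~\ref{lem: antisym}, comparing $\langle\cdot,\cdot\rangle_1$ with its opposite form, is the convenient tool for deciding which normals are space-like and which time-like, and thus which hyperbolic or circular functions occur. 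I expect the whole difficulty to lie in this causal and sign bookkeeping rather than in the algebra: each inner product must be assigned the correct branch ($\cos$ versus $\cosh$, and $\sinh$ with its sign) according to whether the vectors involved are space-like, time-like, or span a time-like plane, and making these consistent across the three vertices and their duals is the crux of the argument.
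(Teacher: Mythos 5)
The paper does not actually prove this lemma: its ``proof'' is the single sentence that the formulas can be found in \cite{Cho09}. Your self-contained derivation --- realizing the vertices in $\mathbb{R}^{2,1}$, reading the side lengths off pairwise inner products, obtaining the three cosine laws from an orthonormal frame at each vertex, and getting the sine law from the Gram determinant $G$ --- is therefore a genuinely different and more informative route, and it is sound: the frame computations are correct, and the identity $\sin^2c\,\sinh^2b+(\cos a-\cos c\cosh b)^2=-G$ (and its two analogues) does hold. Two caveats on the last formula: the polar triangle of this configuration has two time-like vertices and one space-like vertex (the normal to the plane of a space-like side is time-like, the normal to the plane of the time-like side is space-like), so it is a mixed $H^2$/$dS^2$ triangle to which ``the first-kind law of cosines already established'' does not directly apply; your algebraic fallback does work, provided one keeps $\cosh\alpha,\cosh\gamma>0$ while allowing $\sinh\alpha$ and $\sinh\gamma$ to be negative --- precisely the sign bookkeeping you defer.

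The one substantive issue is the law of sines. Your Gram-determinant argument yields
\[
\frac{\sin a}{\cosh\alpha}=\frac{\sinh b}{\sinh\beta}=\frac{\sin c}{\cosh\gamma},
\]
whereas the statement has $\sinh\alpha$ in the first denominator, and you declare your formula to be ``the law of sines'' without noticing the mismatch. Your version is in fact the correct one: $\alpha$ and $\gamma$ both sit at a vertex where a space-like and a time-like edge meet, so by the $a\leftrightarrow c$ symmetry of the configuration they must enter through the same function of the angle; a numerical check (say $a=\pi/2$, $c=\pi/3$, $\beta=1$, so $\cosh b=\tfrac{\sqrt3}{2}\cosh 1$) confirms that $\sin a/\cosh\alpha$ equals $\sinh b/\sinh\beta$ and $\sin c/\cosh\gamma$, while $\sin a/\sinh\alpha$ does not (it is even negative there). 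So the printed statement appears to contain a typo, but a careful write-up should have flagged that your computation contradicts the formula as stated rather than silently substituting the corrected one. Also note that extracting the sine law from the squared identities leaves the signs undetermined, so a sentence fixing the branches is still needed.
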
 
\begin{proof}
 The formulas are proved for example in \cite{Cho09}.
\end{proof}

\begin{lemma}
Consider a triangle in $AdS_3$ contained in a time-like totally geodesic hypersurface, with a space-like edge of length $b$ and two time-like edges of length $ia, ic$. Let $\alpha,\beta,\gamma$ be the corresponding angles ($a,b,c,\alpha,\beta,\gamma$ are real numbers). If we consider $\alpha$ as a function of $a,c,\beta$, then 
\begin{equation}\label{eq:derADS1}
 \frac{\partial \alpha}{\partial a}=\frac{\cosh \gamma}{\sinh b},
\end{equation}
\begin{equation}\label{eq:derADS2}
\frac{\partial \alpha}{\partial c}=-\frac{\cosh b\cosh \alpha}{\sinh b}.
\end{equation}

If we suppose moreover that the triangle is such that $a=c$, and if we deform it among isosceles triangles we obtain:
\begin{equation}\label{eq: tri iso}
 \frac{\partial \alpha}{\partial a}=\frac{\cosh \alpha(1-\cosh b)}{\sinh b}.
\end{equation}
\end{lemma}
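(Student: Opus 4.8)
The plan is to follow the proof of Lemma~\ref{lem:trigoS} essentially line by line, replacing spherical trigonometry by the hyperbolic--de Sitter trigonometry of the time-like totally geodesic plane containing the triangle. Such a plane is a copy of $AdS_2$, and the first step is to record the cosine and sine laws for a triangle in it with one space-like edge $b$ and two time-like edges $ia,ic$. These can be read off from Lemma~\ref{lem:trigoHS} through the sign-flip identification of Lemma~\ref{lem: antisym}: replacing $\langle\cdot,\cdot\rangle_2$ by its opposite turns the $AdS_2$ plane into a copy of $dS^2$ and exchanges the space-like and time-like characters of the edges, so that our triangle becomes precisely the $dS^2$ triangle of Lemma~\ref{lem:trigoHS} (with the edge lengths picking up the appropriate factors of $i$); alternatively these laws are available directly in \cite{Cho09}. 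Concretely I will use a law of the form $\cosh b=\cos a\cos c+\sin a\sin c\,\cosh\beta$ relating the space-like edge to the two time-like ones, together with the companion law for $\cos a$ in terms of $b,c,\alpha$ and the matching sine law.

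Next I differentiate the cosine law for $b$ with respect to $a$, eliminate $\cosh\beta$ using that same law and substitute the cosine law for $c$, exactly as in the passage leading to (\ref{eq: trigo sph 2}); this yields $\partial b/\partial a=\cosh\gamma$. Writing $\tilde\alpha(a,b,c)$ for $\alpha$ regarded as a function of the three side lengths, I then differentiate the cosine law for $\cos a$ and simplify with the sine law to obtain the analogues of (\ref{eq: cos int 1}) and (\ref{eq: cos int 2}), namely expressions for $\partial\tilde\alpha/\partial a$ and $\partial\tilde\alpha/\partial b$. Feeding these into the chain rule
$$ \frac{\partial\alpha}{\partial a}=\frac{\partial\tilde\alpha}{\partial a}+\frac{\partial\tilde\alpha}{\partial b}\frac{\partial b}{\partial a}, $$
which is (\ref{eq: derivee}), and collapsing the resulting combination with the identity $\cosh^2-\sinh^2=1$ (playing the role of $1-\cos^2\gamma=\sin^2\gamma$ in the spherical computation) gives (\ref{eq:derADS1}). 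Equation (\ref{eq:derADS2}) is obtained in the same way, differentiating with respect to $c$ instead of $a$.

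Finally, the isosceles formula (\ref{eq: tri iso}) is an immediate consequence. When $a=c$ the triangle is symmetric, so $\gamma=\alpha$; deforming among isosceles triangles means moving with $dc=da$ at fixed $\beta$, so that along this family
$$ \frac{d\alpha}{da}=\frac{\partial\alpha}{\partial a}+\frac{\partial\alpha}{\partial c}=\frac{\cosh\gamma}{\sinh b}-\frac{\cosh b\cosh\alpha}{\sinh b}=\frac{\cosh\alpha(1-\cosh b)}{\sinh b}, $$
using (\ref{eq:derADS1}), (\ref{eq:derADS2}) and $\cosh\gamma=\cosh\alpha$. I expect the only genuine obstacle to be the bookkeeping of signs and of the factors of $i$ when transporting the $dS^2$ formulas of Lemma~\ref{lem:trigoHS} to the $AdS_2$ triangle --- in particular checking that the angles, which may a priori be real, purely imaginary, or of the form $\pi-i\mathbb{R}_+$, come out real with the signs that make the final identities hold; once the correct cosine and sine laws are fixed, each differentiation is as routine as in the spherical case.
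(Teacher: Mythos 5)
Your strategy is exactly the paper's: pass to the time-like totally geodesic plane, observe it is anti-isometric to $dS^2$ so that the edges swap causal character and the triangle becomes the one of Lemma~\ref{lem:trigoHS} with sides $a,ib,c$ (the paper additionally pins down the angles as $-\alpha,i\beta,-\gamma$ using Lemma~\ref{lem: antisym} and contractibility), then differentiate the cosine laws as in Lemma~\ref{lem:trigoS}, and obtain \eqref{eq: tri iso} by adding \eqref{eq:derADS1} and \eqref{eq:derADS2} with $a=c$, $\alpha=\gamma$.

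One concrete slip: your intermediate claim $\partial b/\partial a=\cosh\gamma$ is wrong; differentiating $\cosh b=\cos c\cos a+\sin c\sin a\cosh\beta$ and substituting the cosine law for $\cos c$ gives $\partial b/\partial a=\pm\sinh\gamma$ (compare \eqref{eq:derADS3}, where the spherical $\cos\gamma$ likewise becomes $\sinh\gamma$). This matters for consistency: the chain rule \eqref{eq: derivee} produces a factor $1+\sinh^{2}\gamma=\cosh^{2}\gamma$, which is what yields the $\cosh\gamma$ in \eqref{eq:derADS1}; with $\cosh\gamma$ in place of $\sinh\gamma$ the combination would not collapse. Since your own recipe (differentiate and substitute) produces the correct value, this is a bookkeeping error of the kind you flagged rather than a gap in the method, but the stated formula as written is false.
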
 

\begin{proof}
In the ambient vector space, the triangle is contained in the intersection of the quadric with an hyperplane of signature $(+,-,-)$, that is, orthogonal to a space-like vector. Without loss of generality 
we consider that this space-like vector is given by the first coordinate in $\mathbb{R}^{2,2}$. So the triangle is contained in 
$$\{(x_2,x_3,x_4)\in \mathbb{R}^3 \vert x_2^2-x_3^2-x_4^2=-1 \} $$
which  is anti-isometric to $dS^2$. 
It is clear that the corresponding triangle in $dS^2$ is contractible and from the definitions of the pseudo-distances   it has edge lengths $a,ib,c$. The angle between the two space-like edges of length $a$ and $c$ is the angle between two space-like vectors spanning a time-like plane and meeting the same component of the resulting geodesic on the corresponding de Sitter space (this is because the triangle is contractible and the edge opposite to the angle is time-like). Hence from Lemma~\ref{lem: antisym}, the angle is $i\beta$, and
it also follows that the two other angles are  $-\alpha$ and $-\gamma$.

Using formulas of Lemma~\ref{lem:trigoHS}, we compute  (\ref{eq:derADS1}) and (\ref{eq:derADS2})  similarly to the corresponding formulas of Lemma~\ref{lem:trigoS}.

Equation (\ref{eq: tri iso}) follows by adding (\ref{eq:derADS1}) and (\ref{eq:derADS2}) and equalize 
$a$ and $c$ (that implies $\alpha=\gamma$).
\end{proof}

\begin{lemma}\label{lem:trigoHS2}
Consider a  triangle in $HS^2$, with two vertices in $dS^2$ joined by a space-like edge of length $a$, and a vertex in $H^2$, with edges of length $b,c$. Denote the corresponding angles by $\alpha, \beta,\gamma$ (all the data are real numbers). Then 
\begin{eqnarray*}
\left\{
      \begin{matrix}
            \cos a = - \sinh b \sinh c+\cosh b \cosh c \cos \alpha \\
       \sinh b=\cos a \sinh c + \sin a \cosh c \sinh \beta  \\
     \sinh c=\cos a \sinh b + \sin a \cosh b \sinh \gamma \\  
  \end{matrix}
    \right..
\end{eqnarray*}
If $a$ is considered as a function of $b,c,\alpha$, then
\begin{equation}\label{eq:derADS3}
 \frac{\partial a}{\partial b}= \sinh \gamma.
\end{equation}
\end{lemma}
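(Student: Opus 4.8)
The plan is to follow the same pattern as Lemma~\ref{lem:trigoS} and the preceding $AdS$ lemma: first establish the three $HS^2$ cosine laws (which can either be quoted from \cite{Cho09} or derived directly from the ambient Minkowski geometry), and then obtain \eqref{eq:derADS3} by implicit differentiation of the first of these laws. To set things up intrinsically, I would realize the triangle by three vectors in $\mathbb{R}^{2,1}$: the vertex $X\in H^2$ carrying the angle $\alpha$, with $\langle X,X\rangle_1=-1$, and the two vertices $Y,Z\in dS^2$, with $\langle Y,Y\rangle_1=\langle Z,Z\rangle_1=1$. By the identities recalled before Lemma~\ref{lem: antisym}, the edge lengths translate into $\langle Y,Z\rangle_1=\cos a$ for the space-like $dS$--$dS$ edge and $\langle X,Y\rangle_1=\sinh c$, $\langle X,Z\rangle_1=\sinh b$ for the two $H$--$dS$ edges.

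For the first cosine law I would work in the positive-definite plane $T_XH^2$. Decomposing $Y=-\sinh c\,X+Y^\perp$ and $Z=-\sinh b\,X+Z^\perp$ with $Y^\perp,Z^\perp\in T_XH^2$, the normalizations $\langle Y,Y\rangle_1=\langle Z,Z\rangle_1=1$ force $\norm{Y^\perp}_1=\cosh c$ and $\norm{Z^\perp}_1=\cosh b$; since $T_XH^2$ is Euclidean, the angle at $X$ gives $\langle Y^\perp,Z^\perp\rangle_1=\cosh b\cosh c\cos\alpha$. Expanding $\langle Y,Z\rangle_1$ then yields $\cos a=-\sinh b\sinh c+\cosh b\cosh c\cos\alpha$. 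For the two mixed laws I would instead work in $T_ZdS^2$, which has Lorentzian signature $(1,1)$. Writing $X=\sinh b\,Z+X^T$ and $Y=\cos a\,Z+Y^T$, the normalizations make $X^T$ time-like with $\norm{X^T}_1=i\cosh b$ and $Y^T$ space-like with $\norm{Y^T}_1=\sin a$. The angle $\gamma$ at $Z$ is then governed by the third case of the angle definition, $\sinh\gamma=i\langle X^T,Y^T\rangle_1/(\norm{X^T}_1\norm{Y^T}_1)$, where Lemma~\ref{lem: antisym} is precisely what pins down the imaginary factors; this gives $\langle X^T,Y^T\rangle_1=\sin a\cosh b\sinh\gamma$, and expanding $\langle X,Y\rangle_1=\sinh c$ produces the third law $\sinh c=\cos a\sinh b+\sin a\cosh b\sinh\gamma$. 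The second law follows by exchanging the roles of $Y$ and $Z$, that is $b\leftrightarrow c$ and $\beta\leftrightarrow\gamma$.

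Finally, \eqref{eq:derADS3} is obtained exactly as \eqref{eq: trigo sph 2} was in Lemma~\ref{lem:trigoS}. Differentiating the first cosine law with respect to $b$ at fixed $c,\alpha$ gives
$$-\sin a\,\frac{\partial a}{\partial b}=-\cosh b\sinh c+\sinh b\cosh c\cos\alpha,$$
so that $\partial a/\partial b=(\cosh b\sinh c-\sinh b\cosh c\cos\alpha)/\sin a$. Eliminating $\cos\alpha$ with the first law and simplifying with $\cosh^2 b-\sinh^2 b=1$ collapses the numerator to $(\sinh c-\cos a\sinh b)/\cosh b$, whence
$$\frac{\partial a}{\partial b}=\frac{\sinh c-\cos a\sinh b}{\sin a\cosh b},$$
which is exactly $\sinh\gamma$ by the third cosine law. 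I expect the only genuinely delicate point to be the bookkeeping of the $i$-factors and the interior-angle sign conventions in the $HS^2$ trigonometry (the time-like versus space-like distinction in the mixed law); once the three cosine laws are in hand, the differentiation is routine.
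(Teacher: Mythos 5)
Your proof is correct, and for the derivative (\ref{eq:derADS3}) it is exactly the paper's route: the paper simply says the derivative "is computed in the same way as (\ref{eq: trigo sph 2})", i.e.\ differentiate the first cosine law in $b$ at fixed $c,\alpha$, eliminate $\cos\alpha$ using that same law, and recognize $\sinh\gamma$ via the third law --- which is precisely your computation, and it checks out. The one place you genuinely diverge is the cosine laws themselves: the paper does not prove them but cites \cite{Cho09}, whereas you derive them from the ambient $\mathbb{R}^{2,1}$ picture by orthogonal decomposition at each vertex ($T_XH^2$ positive definite for the law at the hyperbolic vertex, $T_ZdS^2$ of signature $(1,1)$ for the mixed laws, with Lemma~\ref{lem: antisym} fixing the $i$-factors). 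That makes your argument self-contained at the cost of the sign/orientation bookkeeping you flag at the end --- one does need to check that the signed $H$--$dS$ distance convention and the positive proportionality of $Y^\perp$, $Z^\perp$, $X^T$, $Y^T$ to the actual outgoing geodesic directions are consistent with the stated angle conventions --- but the decompositions and normalizations you give are right, and the resulting identities agree with the paper's.
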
 
\begin{proof}
 The cosine laws come from \cite{Cho09}. The  derivative is computed in the same way as (\ref{eq: trigo sph 2}).
\end{proof}

\subsubsection{Jacobian matrix.}

The faces of the Fuchsian polyhedral  surface $P$ are decomposed in triangles. If we extend each 
half-ray to the point $o$, the dual of $H$ antipodal to $H^*$, each triangle gives a pyramid with forth vertex $o$. 
The height $h_x$ from $H$ to the vertex on $r_x$ should become an height $h_x+\pi/2$ from $o$ to this vertex, but this will change nothing in the arguments below.
We consider the same notations as in the case of the sphere: these are the one of  Figure~\ref{fig:pyramid} --- 
up to change $h_x$ and $h_y$ by $ih_x$ and $ih_y$.
  The cone-angle $\omega_x$ at
the vertex $x$ lying on  the half-ray $r_x$ is
 decomposed as a sum of angles of the form $\omega_{xyz}$ (note that $\omega_{xyz}=\omega_{xzy}$). The link of the pyramid at $x$ is a triangle of the nature of the one of the statement of Lemma~\ref{lem:trigoHS2}, and in which $\omega_{xyz}$ is a function of $\rho_{xy}, \rho_{xz}$ and $d_{xyz}$  when a height varies. Note that 
$d_{xyz}$ remains  fixed by hypothesis.

We denote $a_{xy}:=\frac{\partial \omega_x}{\partial h_y}$.

\begin{enumerate}
 \item  If $x$ is not joined to a vertex belonging to $G y$, then $a_{xy}=0$.
\item Consider a vertex  $y\notin G x$, such that there is an edge between $x$ and $y$. 
 We denote by $s_1,\cdots,s_m$ the vertices joined by an edge to $x$, numbered in the direct order around $x$, and such that $y=s_0=s_{m+1}$. Then
\begin{eqnarray*}
 \  a_{xy}=\frac{\partial \omega_x}{\partial h_y}= 
\sum_{j=0}^m \frac{\partial \omega_{xs_js_{j+1}}}{\partial h_y} 
 = \sum_{j=0}^m \frac{\partial \omega_{xs_js_{j+1}}}{\partial \rho_{xs_j}}\frac{\partial \rho_{xs_j}}{\partial h_y}
+\frac{\partial \omega_{xs_js_{j+1}}}{\partial \rho_{xs_{j+1}}}\frac{\partial \rho_{xs_{j+1}}}{\partial h_y} \\
\ =  \sum_{j=0}^m \left(\frac{\partial \omega_{xs_{j-1}s_{j}}}{\partial \rho_{xs_j}}
+\frac{\partial \omega_{xs_js_{j+1}}}{\partial \rho_{xs_j}}\right)\frac{\partial \rho_{xs_{j}}}{\partial h_y}=
\sum_{j=0}^m  a_{xy}^j.
\end{eqnarray*}
\begin{enumerate}
\item If  $s_j\notin G y$ then $a_{xy}^j=0$,
\item otherwise
\begin{eqnarray*}
 \ &&  a_{xy}^j\stackrel{(\ref{eq:derADS3}),(\ref{eq:derADS1})}{=}\left(\sinh\alpha_{xs_{j-1}s_j}+\sinh\alpha_{xs_js_{j+1}}\right)\frac{\cosh \rho_{s_jx}}{\sinh\ell_{xs_j}} 
\end{eqnarray*}
and it follows from Lemma~\ref{lem:ADSconvexite} that 
\begin{enumerate}
\item if the edge between $x$ and $s_j$ is a false edge then $a_{xy}^j=0$,
\item otherwise, as $P$ is convex,  $a_{xy}^j<0$.
\end{enumerate}
\end{enumerate}
\end{enumerate}

\begin{remark}
In general, $\rho_{s_jx}\not= \rho_{xs_j}$, then $a_{xy}^j\not= a_{yx}^j$, and so at a first sight  $a_{xy}\not= a_{yx}$. This should say that there is no functional $f$ from the set of heights to $\mathbb{R}$ such that 
the gradient of $f$ is equal to $C_{\Gamma}$. In the Euclidean case there is a  famous variational proof
of the Minkowski theorem, due to Minkowski himself. It is not clear if a variational proof could be worked out in the AdS case.
\end{remark}

Now we compute the diagonal terms $a_{xx}$. 

\begin{eqnarray*}
 \  a_{xx}=\frac{\partial \omega_x}{\partial h_x}= 
\sum_{j=0}^m \frac{\partial \omega_{xs_js_{j+1}}}{\partial h_x} 
 = \sum_{j=0}^m \frac{\partial \omega_{xs_js_{j+1}}}{\partial \rho_{xs_j}}\frac{\partial \rho_{xs_j}}{\partial h_x}
+\frac{\partial \omega_{xs_js_{j+1}}}{\partial \rho_{xs_{j+1}}}\frac{\partial \rho_{xs_{j+1}}}{\partial h_x} \\
\ =  \sum_{j=0}^m \left(\frac{\partial \omega_{xs_{j-1}s_{j}}}{\partial \rho_{xs_j}}
+\frac{\partial \omega_{xs_js_{j+1}}}{\partial \rho_{xs_j}}\right)\frac{\partial \rho_{xs_{j}}}{\partial h_x}=
\sum_{j=0}^m a_{xx}^j,
\end{eqnarray*}
and 
\begin{enumerate}
 \item if $s_j\in G y$ with $y\notin G x$ then
\begin{eqnarray*}
\  a_{xx}^j&=&\left(\frac{\partial \omega_{xs_{j-1}s_{j}}}{\partial \rho_{xs_j}}
+\frac{\partial \omega_{xs_js_{j+1}}}{\partial \rho_{xs_j}}\right)\frac{\partial \rho_{xs_{j}}}{\partial h_x}\\
\ &\stackrel{(\ref{eq:derADS1}),(\ref{eq:derADS2})}{=}& \left(\frac{\partial \omega_{xs_{j-1}s_{j}}}{\partial \rho_{xs_j}}
+\frac{\partial \omega_{xs_js_{j+1}}}{\partial \rho_{xs_j}}\right)
\left(-\cosh \ell_{xy}\frac{\cosh \rho_{xy}}{\cosh \rho_{yx}} \right)\frac{\partial \rho_{xs_{j}}}{\partial h_y}
= -\cosh \ell_{xy} a_{yx}^j,
\end{eqnarray*}
which is a non-negative number,
\item otherwise $s_j\in G x$ and 
\begin{eqnarray*}\label{eq:diag}
a_{xx}^j=\left(\frac{\partial \omega_{xs_{j-1}s_{j}}}{\partial \rho_{xs_j}}
+\frac{\partial \omega_{xs_js_{j+1}}}{\partial \rho_{xs_j}}\right)\frac{\partial \rho_{xs_{j}}}{\partial h_x}
\stackrel{(\ref{eq: tri iso}),(\ref{eq:derADS3})}{=}\left(\sinh\alpha_{xs_{j-1}s_j}+\sinh\alpha_{xs_js_{j+1}}\right)
\frac{\cosh \rho_{xs_j}(1-\cosh \ell_{xs_j})}{\sinh \ell_{xs_j}}. 
\end{eqnarray*}
which is a non-negative number.
\end{enumerate}

Let us note that $a_{xx}>0$. There are two cases.
\begin{itemize}
 \item $P$ is made of the orbit of only one vertex. In this case there must exists a true edge between $x$ and another vertex in the orbit of $x$, 
and then the formula above says that $a_{xx}>0$.
\item $P$ is made of the orbit of at least two vertices. So there must be a true edge between two vertices $x$ and $y$ lying in different orbits. 
In this case we have seen that $a_{xy}^j<0$, hence $a_{xx}^j=-\cosh \ell_{xy} a^j_{yx}>0$ and the result follows as all the other 
$a_{xy}^j$ are non-positive.
\end{itemize}

It follows that  
$$|a_{xx}|>\sum_{y\not=x} |a_{yx}|, $$
 that means that the matrix $(a_{xy})_{xy}$ is strictly diagonally dominant, and then invertible by an elementary result of linear algebra. Moreover  $C_{\Gamma}$ is clearly $C^1$. 
This completes the proof of Lemma~\ref{lem : jac}.

\bibliographystyle{alpha}
\bibliography{fs}

\end{document}